\numberwithin{equation}{section}
\DeclareMathOperator*{\dom}{dom\,}
\DeclareMathOperator*{\lip}{lip\,}
\newcommand{\inclu}[0] {\ar@{^{(}->}}
\newcommand{\PP}{\mathbb{P}}
\newcommand{\RR}{\mathbb{R}}
\newcommand{\R}{{\bf R}}
\newcommand{\EE}{{\mathbb E}}
\newcommand{\cb}{\textup{\texttt{proxBoost}}}
\newcommand{\cberm}{\textup{\texttt{BoostERM}}}
\newcommand{\cbermc}{\textup{\texttt{BoostERMC}}}
\newcommand{\pboost}{\textup{\texttt{proxBoost}}\,}
\newcommand{\pboostn}{\textup{\texttt{proxBoost}}}
\newcommand{\rfg}{\textup{\texttt{RobustGap}}}
\newcommand{\cM}{\mathcal{M}}
\newcommand{\cD}{\mathcal{D}}
\newcommand{\alg}{\textup{\texttt{Alg}}}
\newcommand{\ralg}{\textup{\texttt{Alg-R}}}
\newcommand{\balg}{\textup{\texttt{BoostAlg}}}
\newcommand{\balgc}{\textup{\texttt{BoostAlgC}}}
\newcommand{\erm}{\textup{\texttt{ERM}}}
\newcommand{\ermc}{\textup{\texttt{ERMC}}}
\newcommand{\rermc}{\textup{\texttt{ERM-RC}}}
\newcommand{\rerm}{\textup{\texttt{ERM-R}}}
\newcommand{\cX}{\mathcal{X}}
\newcommand{\ext}{\textup{\texttt{Extract}}}
\newcommand{\cG}{\mathcal{G}}
\newcommand{\cO}{\mathcal{O}}
\newcommand{\cP}{\mathcal{P}}
\newcommand{\cI}{\mathcal{I}}
\newcommand{\argmin}{\operatornamewithlimits{argmin}}
\newtheorem{theorem}{Theorem}[section]
\newtheorem{lemma}[theorem]{Lemma}
\newtheorem{defn}[theorem]{Definition}
\newtheorem{corollary}[theorem]{Corollary}
\newtheorem{assumption}[theorem]{Assumption}
\newtheorem{example}{Example}[section]
\title{From low probability to high confidence in stochastic convex optimization}
\author{Damek Davis\thanks{School of ORIE, Cornell University,
		Ithaca, NY 14850, USA;
		\texttt{people.orie.cornell.edu/dsd95/}.}\and Dmitriy Drusvyatskiy\thanks{Department of Mathematics, U. Washington,
		Seattle, WA 98195; Microsoft Research, Redmond, WA 98052;  \texttt{www.math.washington.edu/{\raise.17ex\hbox{$\scriptstyle\sim$}}ddrusv}. Research of Drusvyatskiy was supported by the NSF DMS   1651851 and CCF 1740551 awards.}\and Lin Xiao\thanks{Microsoft Research,
		Redmond, WA, USA; \texttt{www.microsoft.com/en-us/research/people/lixiao/}} \and Junyu Zhang\thanks{Department of Industrial and Systems Engineering, University
		of Minnesota, Minneapolis, MN, USA; \texttt{zhan4393@umn.edu}}}
\begin{document}
\date{}
\maketitle

\begin{abstract}


Standard results in stochastic convex optimization bound the number of samples that an algorithm needs to generate a point with small function value in expectation. More nuanced \emph{high probability} guarantees are rare, and typically either rely on ``light-tail'' noise assumptions or exhibit worse sample complexity. In this work, we show that a wide class of stochastic optimization algorithms for strongly convex problems can be augmented with high confidence bounds at an overhead cost that is only logarithmic in the confidence level and polylogarithmic in the condition number. The procedure we propose, called \pboostn, is elementary and builds on two well-known ingredients: robust distance estimation and the proximal point method. We discuss consequences for both streaming (online) algorithms and offline algorithms based on empirical risk minimization. 
\end{abstract}

\section{Introduction}
Stochastic convex optimization lies at the core of modern statistical and machine learning. Standard results in the subject bound the number of samples that an algorithm needs to generate a point with small function value in {\em expectation}.
%
Specifically, consider 
\begin{equation}\label{eqn:stoch-opt-prob}
    \min_{x}~ f(x):=\EE_{z\sim\cP}[f(x,z)],
\end{equation}
where the random variable $z$ follows a fixed unknown distribution~$\cP$
and $f(\cdot,z)$ is convex for almost every $z\sim\cP$. 
Given a small tolerance $\epsilon>0$, stochastic gradient methods typically 
produce a point $x_{\epsilon}$ satisfying
\[
    \EE[f(x_{\epsilon})] - \min f \leq \epsilon.
\]
The cost of the algorithms, measured by the required number of 
stochastic (sub-)gradient evaluations,
is $\mathcal{O}(1/\epsilon^2)$ or $\mathcal{O}(1/\epsilon)$ if $f$ is strongly convex (e.g., \cite{complexity,MR1167814,ghadimi2013optimal}).

In this paper, we are  interested in procedures that can produce 
an approximate solution with \emph{high probability}, meaning
a point $x_{\epsilon,p}$ satisfying
\begin{equation}\label{eqn:high_prob_bound}
\PP(f(x_{\epsilon, p})-\min f\leq \epsilon)\geq 1-p,
\end{equation}
where $p>0$ can be arbitrarily small.
By Markov's inequality, one can guarantee~\eqref{eqn:high_prob_bound}
by generating a point $x_{\epsilon,p}$ satisfying 
$\EE[f(x_{\epsilon,p})]-f^*\leq p\epsilon$,
e.g., by using standard stochastic gradient methods.
However, the resulting sample complexity can be very high for small~$p$ 
with the typical scaling of $\mathcal{O}(1/(p\epsilon))$ 
or $\mathcal{O}(1/(p\epsilon)^2)$.
Existing literature does provide a path to reducing the dependence of the sample complexity on~$p$
to $\log(1/p)$, but this usually comes with cost of either
worse dependence on~$\epsilon$ 
(e.g., \cite{BousquetElisseeff2002,NesterovVial2008,shalev2009stochastic})
or more restrictive sub-Gaussian assumptions on the stochastic gradient noise
(e.g. \cite{MR2486041,MR3353214,MR3023780,ghadimi2013optimal}). 

We aim to develop \emph{generic} low-cost procedures that equip stochastic optimization algorithms with high confidence guarantees, without making restrictive noise assumptions.
Consequently,  it will be convenient to treat such algorithms as black boxes. More formally, suppose that the function $f$ may only be accessed through a {\em minimization oracle} $\mathcal{M}(f,\epsilon)$, which on input $\epsilon>0$, returns a point $x_{\epsilon}$ satisfying the low confidence bound
\begin{equation}\label{eqn:conf_bound}
\PP(f(x_{\epsilon})-\min f\leq \epsilon)\geq \frac{2}{3}.
\end{equation}
(By Markov's inequality, minimization oracles arise from any algorithm that can generate $x_{\epsilon}$ satisfying $\EE f(x_{\epsilon})-\min f\leq \epsilon/3$.)
Let $\mathcal{C}_{\cM}(f, \epsilon)$ denote the cost of the oracle call
$\cM(f, \epsilon)$. Given a minimization oracle and its cost, 
we investigate the following question: 
\begin{quote}
Is there a procedure within this oracle model of computation that returns a point $x_{\epsilon,p}$ satisfying the high confidence bound \eqref{eqn:high_prob_bound} at a total cost that is only a ``small'' multiple of $\mathcal{C}_{\cM}(f, \epsilon)\cdot\log(\frac{1}{p})$?
\end{quote}
We will see that when~$f$ is strongly convex, the answer is yes for a wide class of oracles $\mathcal{M}(f,\epsilon)$. To simplify discussion, 
suppose $f$ is $\mu$-strongly convex and $L$-smooth (differentiable with $L$-Lipschitz continuous gradient). Then the cost $\mathcal{C}_{\cM}(f,\epsilon)$ typically depends on the condition number $\kappa:=L/\mu\gg 1$, as well as scale sensitive quantities such as initialization quality and upper bound on the gradient variances, etc.  
The procedures introduced in this paper execute the minimization oracle multiple
times in order to boost its confidence, with the total cost on the order of 
$$ \log\left(\frac{\log(\kappa)}{p}\right)\log(\kappa)\cdot \mathcal{C}_{\cM}\left(f,\tfrac{\epsilon}{\log(\kappa)}\right).$$
Thus, high probability bounds are achieved with a small cost increase, which depends only logarithmically on $1/p$ and polylogarithmically on the condition number $\kappa$.

Before introducing our approach, we discuss two techniques for boosting the confidence of a minimization oracle, both of which have limitations. As a first approach, one may query the oracle $\mathcal{M}(f,\epsilon)$ multiple times and pick the ``best" iterate from the batch. This is a flawed strategy since often one cannot test which iterate is ``best" without increasing sample complexity. To illustrate, consider estimating the expectation $f(x)=\EE_z \left[f(x,z)\right]$ to $\epsilon$-accuracy for a fixed point $x$. This task amounts to mean estimation, which requires on the order of $1/\epsilon^2$ samples, even under sub-Gaussian assumptions \cite{MR3052407}. In this paper, the cost $\mathcal{C}_{\cM}(f,\epsilon)$ typically scales at worst as $1/\epsilon$, and therefore mean estimation would significantly degrade the overall sample complexity.

The second approach leverages the fact that, with strong convexity, 
\eqref{eqn:conf_bound} implies 
 $$\PP(\|x_\epsilon-\bar x\|\leq \sqrt{2\epsilon/\mu})\geq \frac{2}{3},$$
 where $\bar x$ is the minimizer of $f$. Given this bound, one may apply the \emph{robust distance estimation} technique of~\cite[p. 243]{complexity} and~\cite{hsu2016loss} to choose a point near $\bar x$: Run $m$ trials of $\mathcal{M}(f,\epsilon)$ and find one iterate $x_{i^*}$ around which the other points ``cluster''. Then the point $x_{i^*}$ will be within a distance of $\sqrt{18\epsilon/\mu}$ from $\bar x$ with probability $1-\exp(-m/18)$. The downside of this strategy is that when converting naively back to function values, the suboptimality  gap becomes $f(x_{i^*})-\min f \leq \frac{L}{2}\|x_{i^*} - \bar x\|^2 \leq 9\kappa\epsilon$. Thus the function gap at $x_{i^*}$ may be significantly larger than the expected function gap at $x_{\epsilon}$, by  a factor of the condition number. Therefore, robust distance estimation exhibits a trade-off between robustness and efficiency.

The robustness/efficiency trade-off disappears for perfectly conditioned losses. Therefore, it appears plausible that one might avoid the $\kappa$ factor through a continuation procedure that solves a sequence of nearby, better conditioned problems. This is the strategy we explore here. The \pboost procedure embeds robust distance estimation inside a proximal point method. It begins by declaring the initial point $x_0$ to be the output of the robust distance estimator for minimizing~$f$. Then the better conditioned function 
$$f^t(x):=f(x)+\frac{\mu 2^{t}}{2}\|x-x_t\|^2,$$
is formed and the next iterate $x_{t+1}$ is declared to be the output of the robust distance estimator for minimizing~$f^t$. Since the conditioning of $f^t$ rapidly improves with $t$, the robust distance estimator becomes more efficient as the counter~$t$ grows.  

The \pboost method can be applied to a wide class of stochastic minimization oracles, including both streaming algorithms (e.g., stochastic gradient methods) and offline methods such as empirical risk minimization (ERM).
We now illustrate the consequences of \pboost for solving the problem~\eqref{eqn:stoch-opt-prob} using these two types of oracles.

\subsection{Streaming Oracles}\label{sec:subsecintro1}
Stochastic gradient methods can be treated as minimization oracles $\mathcal{M}(f,\epsilon)$ whose cost $\mathcal{C}_{\cM}(f,\epsilon)$ are measured by the number stochastic gradients needed to reach functional accuracy~$\epsilon$ in expectation.
An algorithm with minimal such cost was proposed by Ghadimi and Lan \cite{ghadimi2013optimal}. It generates a point $x_\epsilon$ satisfying $\EE\left[f(x_\epsilon) - \min f\right] \leq \epsilon$ with
\begin{equation}\label{eqn:Lefficiency_estimateintrolan}
\mathcal{O}\left(\sqrt{\kappa}\ln\left(\frac{\Delta_{\rm in}}{\epsilon}\right)+\frac{\sigma^2}{\mu \epsilon}\right)
\end{equation}
stochastic gradient evaluations, where the quantity $\sigma^2$ is an upper bound on the variance of the stochastic gradient estimator $\nabla f(x,z)$ and $\Delta_{\rm in}$ is a known upper bound on the initial function gap $\Delta_{\rm in}\geq f(x_0)-f^*$. A simpler algorithm with a similar efficiency estimate was recently presented by Kulunchakov and Mairal \cite{kulunchakov2019estimate}, and was based on estimate sequences. Aybat et al. \cite{aybat2019universally}  developed an algorithm with similar efficiency, but in contrast to previous work, it does not require the variance $\sigma^2$ and the initial gap $\Delta_{\rm in}$ as inputs.

It is intriguing to ask if one can equip the stochastic gradient method and its accelerated variant with high confidence guarantees. In their original work \cite{ghadimi2013optimal,MR3023780}, Ghadimi and Lan  provide an affirmative answer under the additional assumption that the stochastic gradient estimator has light tails. 
The very recent work of Juditsky-Nazin-Nemirovsky-Tsybakov \cite{juditsky2019algorithms} shows that one can avoid the light tail assumption for the basic stochastic gradient method, and for mirror descent more generally, by truncating the gradient estimators. High confidence bounds for the accelerated method, without light tail assumptions, remain open.

In this work, the optimal method of~\cite{ghadimi2013optimal} will be used as a minimization oracle within \pboostn, allowing us to nearly match the efficiency estimate~\eqref{eqn:Lefficiency_estimateintrolan} without ``light-tail" assumptions. Equipped with this oracle, \pboost returns a point $x_{\epsilon,p}$ satisfying~\eqref{eqn:high_prob_bound} 
and the overall cost of the procedure is
$$\widetilde{\mathcal{O}}\left(\log\left(\frac{1}{p}\right)\left(\sqrt{\kappa}\ln\left(\frac{\Delta_{\rm in}}{\epsilon}\vee \kappa\right)+\frac{\sigma^2}{\mu \epsilon}\right)\right).$$
Here, $\widetilde{\mathcal{O}}(\cdot)$ only suppresses logarithmic dependencies in $\kappa$; see Section~\ref{sec:conseq_approx} for a precise guarantee. Thus for small $\epsilon$, the sample complexity of the robust procedure is roughly $\log(1/p)$ times the efficiency estimate \eqref{eqn:Lefficiency_estimateintrolan} of the low-confidence algorithm.  

\subsection{Empirical Risk Minimization Oracles}\label{sec:subsecintro2}
An alternative approach to streaming algorithms, such as the stochastic gradient method, is based on empirical risk minimization (ERM) or sample average approximation (SAA) \cite{ShapiroNemirovski2005}. Namely, we draw i.i.d.\ samples $z_1,\ldots, z_n\sim \cP$ and minimize the empirical average
\begin{equation}
    \min_{x}~ f_S(x):=\frac{1}{n}\sum_{i=1}^n f(x,z_i).
\end{equation}
A key question is to determine the number $n$ of samples that would ensure that the minimizer $x_S$ of the empirical risk $f_S$ has low generalization error $f(x_S)-\min f$, with reasonably high probability. There is a vast literature on this subject; see for example \cite{hsu2016loss,bartlett2002rademacher,shalev2009stochastic,shalev2014understanding}. We build here on the work of Hsu-Sabato \cite{hsu2016loss}, who focused on high confidence guarantees for nonnegative losses $f(x,z)$. They showed that the empirical risk minimizer $x_S$ yields a robust distance estimator of the true minimizer of $f$.
As a consequence they deduced that ERM can find a point $x_S$ satisfying the relative error guarantee
$$\PP\bigl[f(x_S)\leq (1+\gamma)f^*\bigr]\geq 1-p,$$
with the sample complexity $n$ on the order of 
$$\mathcal{O}\left(\log\left(\frac{1}{p}\right)\cdot\frac{\hat{\kappa} \, \kappa}{\gamma}\right).$$
Loosely speaking, here $\kappa$ and $\hat{\kappa}$ are the condition numbers of  $f$ and $f_S$, respectively. By embedding ERM within \pboostn, we obtain the much better sample complexity
$$\widetilde{\mathcal{O}}\left(\log\left(\frac{1}{p}\right)\left(\frac{\hat \kappa}{\gamma}+\hat \kappa\right)\right),$$
where the symbol $\widetilde{\mathcal{O}}$ only suppresses polylogarithmic dependence on $\kappa$ and $\hat \kappa$. See Section~\ref{sec:cons_ERM} for the precise sample complexity guarantee.

\subsection{Convex composite optimization}
The results, previewed so far rely on the assumption that $f$ is strongly convex and smooth. These techniques can not directly accommodate constraints or nonsmooth regularizers. 
To illustrate the difficulty, consider the convex composite optimization problem
\begin{equation}\label{eqn:comp_general}
\min_{x} f(x)=g(x)+h(x),
\end{equation}
where $g\colon\R^d\to\R$ is smooth and strongly convex 
and $h\colon\R^d\to\R\cup\{+\infty\}$ is an arbitrary closed convex function. 
For example, a constrained optimization problem can be modeled by setting $h$ to be zero on the feasible region and plus infinity elsewhere.
The approach for the unconstrained problems, outlined previously, heavily relies on the fact that the function gap $f(x)-\min f$ and the squared distance to the solution $\|x-\bar x\|^2$ are proportional up to multiplication by the condition number. The analogous statement for the composite setting~\eqref{eqn:comp_general} is decisively false. In particular,  it is unclear how to turn low probability guarantees on the function gap $f(x)-\min f$ to high probability outcomes, even if one was willing to degrade the accuracy by the condition number of $g$.

In the last section of the paper, we resolve this apparent difficulty and thereby generalize the $\cb$ framework to the entire composite problem class \eqref{eqn:comp_general}. The key tool is a new robust distance estimation technique
 for convex composite problems, which may be of independent interest. Consequences for regularized empirical risk minimization and proximal streaming algorithms, in the spirit of Sections \ref{sec:subsecintro1} and \ref{sec:subsecintro2}, follow immediately.

\subsection{Related literature}
Our paper rests on two pillars: the proximal point method and robust distance estimation. The two techniques have been well studied in the optimization and statistics literature respectively. The proximal point method was introduced by Martinet \cite{MR0290213,MR0298899} and further popularized by Rockafellar \cite{MR0410483}. This construction is also closely related to the smoothing function of Moreau \cite{MR0201952}. Recently, there has been a renewed interest in the proximal point method, most notably due to its uses in accelerating variance-reduction methods for minimizing finite sums of convex functions \cite{catalyst,frostig,conjugategradient,accsdca}. The proximal point method has also featured prominently as a guiding principle in nonconvex optimization, with the works of \cite{asi2019importance,asi2018stochastic,duchi2018stochastic,davis2019stochastic,prixm_guide_subgrad}. 
The stepsize schedule we use within the proximal point method is geometrically decaying, in contrast to the more conventional polynomially decaying schemes. Geometrically decaying schedules for subgradient methods were first used by Goffin \cite{goffin} and have regained some attention recently due to their close connection to the popular step-decay schedule in stochastic optimization \cite{ge2019step,aybat2019universally,xu2016accelerated,yang2018does}.

Robust distance estimation has a long history. The estimator we use was first introduced in \cite[p. 243]{complexity}, and can be viewed as a multivariate generalization of the median of means estimator \cite{MR1688610,MR855970}. Robust distance estimation was further investigated in \cite{hsu2016loss} with a focus on high probability guarantees for empirical risk minimization. A different generalization based on the geometric median was studied in \cite{MR3378468}.  Other recent articles related to the subject include median of means tournaments \cite{lugosi2016risk}, robust multivariate mean estimators \cite{joly2017estimation,lugosi2019sub}, and  bandits with heavy tails \cite{bubeck2013bandits}.

One of the main applications of our techniques is to streaming algorithms. Most currently available results that establish high confidence convergence guarantees make sub-Gaussian assumptions on the stochastic gradient estimator \cite{MR2486041,MR3353214,MR3023780,ghadimi2013optimal}. More recently, there has been renewed interest in obtaining robust guarantees without the light-tails assumption. For example, the two works \cite{chen2017distributed,pmlr-v80-yin18a} make use of the geometric median of means technique to robustly estimate the gradient in distributed optimization.
A different technique was recently developed by Juditsky et al. \cite{juditsky2019algorithms}, where the authors  establish high confidence guarantees for mirror descent type algorithms by truncating the gradient.

\bigskip

The outline of the paper is as follows. Section~\ref{sec:prob_set} presents the problem setting and robust distance estimation. Section \ref{sec:main_res} develops the \pboost procedure. Section~\ref{sec:cons_ERM} presents consequences for empirical risk minimization, while Section~\ref{sec:conseq_approx} discusses consequences for streaming algorithms, both in the strongly convex and smooth setting. The final Section~\ref{sec:conv_comp_ext} extends the aforementioned techniques to convex composite problems.


\section{Problem setting}\label{sec:prob_set}
Throughout, we follow standard notation of convex optimization, as set out for example in the monographs \cite{nest_book_full,Beck17}. We let $\R^d$ denote an Euclidean space with inner product $\langle \cdot,\cdot\rangle$ and the induced norm $\|x\|=\sqrt{\langle x,x\rangle}$. The symbol $B_{\varepsilon}(x)$ will stand for the closed ball around $x$ of radius $\varepsilon>0$. We will use the shorthand interval notation $[1,m]:=\{1,\ldots,m\}$ for any number $m\in \mathbb{N}$. Abusing notation slightly, for any set of real numbers $\{r_i\}_{i=1}^m$ we will let $\texttt{median}(r_1,r_2,\ldots,r_m)$ denote the $\lceil\frac{m}{2}\rceil$'th entry in the ordered list $r_{[1]}\leq r_{[2]}\leq \ldots\leq r_{[m]}$.

Consider a function $f\colon\R^d\to\R\cup\{+\infty\}$. The effective domain of $f$, denoted $\dom f$, consists of all points where $f$ is finite.
The function $f$ is called $\mu$-strongly convex if the perturbed function $f-\frac{\mu}{2}\|\cdot\|^2$ is convex. We say that $f$ is $L$-smooth if it differentiable with $L$-Lipschitz continuous gradient. 
If $f$ is both $\mu$-strongly convex and $L$-smooth, then standard results in convex optimization (e.g., \cite[\S~2.1]{nest_book_full}) imply 
for all $x,y\in\R^d$ the bound
$$
\langle \nabla f(y),x-y\rangle+\frac{\mu}{2}\|x-y\|^2 \leq
f(x) - f(y) \leq \langle \nabla f(y),x-y\rangle+\frac{L}{2}\|x-y\|^2.
$$
In particular, 
if $y$ is the minimizer of~$f$, denoted by~$\bar x$,
we have $\nabla f(\bar x)=0$ and thus
the two-sided bound: 
\begin{equation}\label{eqn:smooth_conv_est}
\frac{\mu}{2}\|x-\bar x\|^2\leq f(x)-f(\bar x)\leq \frac{L}{2}\|x-\bar x\|^2\qquad \textrm{for all }x \in \R^d.
\end{equation}
The ratio $\kappa:=L/\mu$ is called the condition number of $f$.  


\begin{assumption}\label{ass:strong_conv}
	{\rm Throughout this work, we consider the optimization problem 
		\begin{equation}\label{eqn:targ_prob}
		\min_{x\in \R^d}~ f(x)
		\end{equation}
		where the function $f\colon\R^d\to\R\cup\{+\infty\}$
is closed and $\mu$-strongly convex. We denote the minimizer of $f$ by $\bar x$ and its minimal value by $f^*:=\min f$.}
\end{assumption}

Let us suppose for the moment that the only access to $f$ is by querying 
a black-box procedure that estimates $\bar x$. Namely following \cite{hsu2016loss} we will call a procedure $\cD(\varepsilon)$ a {\em weak distance oracle} for the problem \eqref{eqn:targ_prob} if it returns a point $x$ satisfying 
\begin{equation}\label{eqn:resampling}
\PP[\|x-\bar x\|\leq \varepsilon]\geq\frac{2}{3}.
\end{equation}
We will moreover assume that when querying $\cD(\varepsilon)$ multiple times, the returned vectors are all statistically independent. Weak distance oracles arise naturally in stochastic optimization both in streaming and offline settings. We will discuss specific examples in Sections~\ref{sec:cons_ERM} and~\ref{sec:conseq_approx}.
 The numerical value $2/3$ plays no real significance and can be replaced by any fraction greater than a half.

It is well known from \cite[p. 243]{complexity} and \cite{hsu2016loss} that the low-confidence estimate \eqref{eqn:resampling} can be improved to a high confidence guarantee by a clustering technique. 
Following \cite{hsu2016loss}, we define the {\em robust distance estimator} $\cD(\varepsilon,m)$ to be the following procedure (Algorithm~\ref{alg:stoc_prox_high_prob}).

\begin{algorithm}[H]
	{\bf Input:}  access to a weak distance oracle $\cD(\varepsilon)$ and trial count $m$. \\
	Query $m$ times the oracle $\cD(\varepsilon)$ and let $Y=\{y_1,\ldots, y_m\}$ consist of the responses.
	
	{\bf Step } $i=1,\ldots,m$:\\
	\hspace{20pt} Compute $r_i=\min\{r\geq 0: |B_{r}(y_i)\cap Y|>\frac{m}{2}\}$.
	
	Set $i^*=\argmin_{i\in [1,m]} r_i$ 

	 {\bf Return} $y_{i^*}$		\\
	\caption{Robust Distance Estimation $\cD(\varepsilon,m)$	
	}
	\label{alg:stoc_prox_high_prob}
\end{algorithm}

Thus the robust distance estimator  $\cD(\varepsilon,m)$ first generates $m$ statistically independent random points  $y_1,\ldots, y_m$ by querying $m$ times the weak distance oracle $\cD(\varepsilon)$. Then the procedure computes the smallest radius ball around each point $y_i$ that contains more than half of the generated points $\{y_1,\ldots, y_m\}$. Finally, the point $y_{i^*}$ corresponding to the smallest such ball is returned. 
See Figure~\ref{fig_rde} for an illustration.

\begin{figure}[h]
	\centering
	\includegraphics{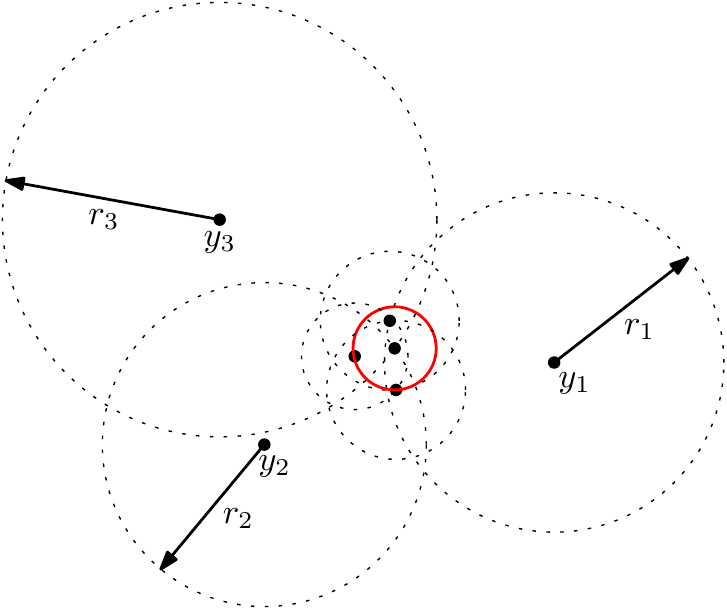}
	\caption{Illustration of the robust distance estimator $\cD(\varepsilon,m)$.}
	\label{fig_rde}
\end{figure}

The intuition underlying the algorithm is that by Chernoff's bound, with high confidence, the ball $B_{\varepsilon}(\bar x)$ will contain strictly more than $m/2$ of the generated points. Therefore in this event, the estimate $r_{i^*}<2\varepsilon$ holds.
 Moreover since the two sets, $B_{\varepsilon}(\bar x)$ and $B_{r_{i^*}}(y_{i^*})$ intersect, it follows that $\bar x$ and $y_{i^*}$ are within a distance of $3\varepsilon$ of each other. For a complete argument, see \cite[p. 243]{complexity} or \cite[Propositions 8,9]{hsu2016loss}. 
\begin{lemma}[Robust Distance Estimator]\label{lem:rob_dist_est}
	The point $x$ returned by $\cD(\varepsilon,m)$ satisfies $$\PP\bigl(\|x- \bar x\|\leq 3\varepsilon\bigr)\geq 1-\exp\left(-\frac{m}{18}\right).$$
\end{lemma}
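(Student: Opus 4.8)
The plan is to prove the bound via a high-probability ``clustering'' event and then a short triangle-inequality argument, exactly along the lines sketched in the paragraph preceding the statement. Let $y_1,\ldots,y_m$ be the independent outputs of $\cD(\varepsilon)$, and for each $i$ let $X_i$ be the indicator of the event $\{\|y_i-\bar x\|\leq\varepsilon\}$. By the defining property~\eqref{eqn:resampling} of the weak distance oracle, each $X_i$ is Bernoulli with success probability at least $2/3$, and by independence the sum $\sum_{i=1}^m X_i$ counts how many of the generated points land inside $B_\varepsilon(\bar x)$. The core of the argument is the \emph{good event}
$$
E:=\Bigl\{\,\bigl|B_\varepsilon(\bar x)\cap Y\bigr|>\tfrac{m}{2}\,\Bigr\},
$$
i.e.\ that strictly more than half the points cluster inside the $\varepsilon$-ball around the true minimizer.

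First I would quantify $\PP(E)$ using a Chernoff/Hoeffding bound. Since $\EE[\sum_i X_i]\geq \tfrac{2}{3}m$, the event $E^c$ requires the sum to fall below $\tfrac{m}{2}$, a deviation of at least $\tfrac{1}{6}m$ below its mean. A standard one-sided Chernoff bound for sums of independent Bernoulli variables then gives $\PP(E^c)\leq \exp(-m/18)$; this is precisely where the constant $1/18$ in the statement comes from, so I would be careful to pick the Chernoff estimate that yields exactly this exponent (the relative-entropy form, or Hoeffding with the gap $1/6$, both produce a bound of this order, and I would verify the constant matches). Thus $\PP(E)\geq 1-\exp(-m/18)$.

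Next I would carry out the deterministic geometric argument \emph{on the event} $E$. Since $B_\varepsilon(\bar x)$ contains more than $m/2$ of the points, plugging $r=\varepsilon$ into the definition of $r_{i}$ shows that every point $y_i$ lying in $B_\varepsilon(\bar x)$ has $r_i\leq 2\varepsilon$: indeed if $y_i\in B_\varepsilon(\bar x)$ then $B_{2\varepsilon}(y_i)\supseteq B_\varepsilon(\bar x)$, so $B_{2\varepsilon}(y_i)$ already captures more than half of $Y$. Because such a point exists on $E$, the minimizing index $i^*$ satisfies $r_{i^*}\leq 2\varepsilon$ as well. Now consider the ball $B_{r_{i^*}}(y_{i^*})$. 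Both $B_{r_{i^*}}(y_{i^*})$ and $B_\varepsilon(\bar x)$ contain more than $m/2$ of the $m$ points, so by a pigeonhole/counting argument their intersection is nonempty; let $z$ be a common point. Then by the triangle inequality $\|y_{i^*}-\bar x\|\leq \|y_{i^*}-z\|+\|z-\bar x\|\leq r_{i^*}+\varepsilon\leq 2\varepsilon+\varepsilon=3\varepsilon$.

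Combining the two pieces: on the event $E$, which holds with probability at least $1-\exp(-m/18)$, the returned point $x=y_{i^*}$ satisfies $\|x-\bar x\|\leq 3\varepsilon$, which is exactly the claim. The only genuinely delicate point — and the step I would flag as the main obstacle — is tracking the constants so that the Chernoff exponent comes out to $m/18$ rather than some other multiple; the geometric half of the argument is robust and constant-free beyond the factor $3$. Since the statement explicitly cites \cite[p.~243]{complexity} and \cite[Propositions 8,9]{hsu2016loss}, I would cross-check the final exponent against those references rather than re-deriving the sharpest possible constant.
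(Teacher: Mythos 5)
Your proposal is correct and follows essentially the same route the paper takes: the Hoeffding/Chernoff bound with gap $\tfrac{1}{6}$ giving the exponent $m/18$, followed by the deterministic argument that on the clustering event $r_{i^*}\leq 2\varepsilon$ and the two majority balls intersect, yielding $\|y_{i^*}-\bar x\|\leq 3\varepsilon$. This is exactly the sketch in the paragraph preceding the lemma and the argument the paper spells out in full for the pseudometric generalization (Lemma~\ref{lem:med_mean_ext}), so no further comparison is needed.
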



We seek to understand how one may use a robust distance estimator $\cD(\varepsilon,m)$ to compute a point $x$ satisfying $f(x)-\min f\leq \delta$ with high probability, where $\delta>0$ is a specified accuracy. As motivation, consider the case when $f$ is also $L$-smooth.
Then one immediate approach is to appeal to the upper bound in \eqref{eqn:smooth_conv_est}. Hence by Lemma~\ref{lem:rob_dist_est}, the point $x=\cD\left(\varepsilon,m\right)$, with $\varepsilon=\sqrt{\frac{2\delta}{9L}}$, satisfies the guarantee 
$$\mathbb{P}\left(f(x)-f^*\leq \delta\right)
~\geq~ \PP\left(\|x-\bar{x}\|\leq 3\varepsilon\right) ~\geq~ 1-\exp\left(-\frac{m}{18}\right).$$

We will follow an alternative approach, which  can significantly decrease the overall cost in the regime $\kappa\gg 1$. The optimistic goal is to replace the accuracy $\varepsilon\approx\sqrt{\frac{\delta}{L}}$ used in the call to $\cD(\varepsilon,m)$ by the potentially much larger quantity $\sqrt{\frac{\delta}{\mu}}$. The strategy we propose  will apply a robust distance estimator $\cD$ to a sequence of optimization problems that are better and better conditioned, thereby amortizing the overall cost. In the initial step, we will simply apply $\cD$ to $f$ with the low accuracy 
$\sqrt{\frac{\delta}{\mu}}$. In step $i$, we will apply $\cD$ to a new function $f^{i}$, which has condition number $\kappa_i\approx\frac{L+\mu 2^i}{\mu+\mu 2^i}$, with accuracy $\varepsilon_i\approx\sqrt{\frac{\delta}{\mu+\mu 2^i}}$. Continuing this process for $T\approx \log_2\bigl(\frac{L}{\mu}\bigr)$ rounds, we arrive at accuracy $\varepsilon_T \approx \sqrt{\frac{\delta}{\mu+L}}$ and a function $f^T$ that is nearly perfectly conditioned with $\kappa_T\leq 2$. In this way, the total cost is amortized over the sequence of optimization problems. The key of course is to control the error incurred by varying the optimization problems along the iterations.

\section{Main result}\label{sec:main_res}

The continuation procedure outlined at the end of the previous section can be succinctly described within the framework of an {\em inexact proximal point method}. Henceforth, fix an increasing sequence of penalties $\lambda_0,\ldots, \lambda_T$ and a sequence of centers $x_0,\ldots,x_T$. 
 For each index $i=0,\ldots, T$, define the quadratically perturbed functions and their minimizers:
\begin{align*}
f^{i}(x):=f(x)+\frac{\lambda_i}{2}\|x-x_i\|^2,\qquad \bar x_{i+1}:=\argmin_x f^{i}(x).
\end{align*}
The exact proximal point method \cite{MR0290213,MR0298899,MR0410483} proceeds by inductively declaring $x_{i}=\bar x_{i}$ for $i\geq 1$. Since computing $\bar x_{i}$ exactly is in general impossible, we will instead monitor the error $\|\bar x_{i}-x_{i}\|$. The following elementary result will form the basis for the rest of the paper. To simplify notation, we will set  $\bar x_0:=\argmin f$ and $\lambda_{-1}:=0$, throughout.

\begin{theorem}[Inexact proximal point method]\label{thm:main_proppointest}
For all $j\geq 0$, the following estimate holds:
\begin{equation}\label{eqn:bound_opt_val_iters}
f^{j}(\bar x_{j+1})-f^*\leq  \sum_{i=0}^{j}\frac{\lambda_{i}}{2}\|\bar x_{i}-x_{i}\|^2.
\end{equation}
Consequently, we have the error decomposition:
\begin{equation}\label{eqn:func_progress}
\boxed{f(x_{j+1})-f^*\leq (f^{j}(x_{j+1})-f^{j}(\bar x_{j+1}))+\sum_{i=0}^{j} \frac{\lambda_i}{2}\|\bar x_{i}-x_{i}\|^2.}
\end{equation}
Moreover, if $f$ is $L$-smooth, then for all $j\geq 0$ the estimate holds:
\begin{align}
f(x_j)-f^*&\leq \frac{L+\lambda_{j-1}}{2}\|\bar x_{j}-x_{j}\|^2+\sum_{i=0}^{j-1}\frac{\lambda_{i}}{2}\|\bar x_{i}-x_{i}\|^2. \label{eqn:init_better}
\end{align}
\end{theorem}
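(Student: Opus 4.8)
The plan is to establish the three displays in the order stated, with the telescoping bound~\eqref{eqn:bound_opt_val_iters} carrying essentially all of the work; the remaining two then follow by short manipulations. The two facts that drive everything are that $\bar x_{i+1}$ is the \emph{exact} minimizer of $f^i$, so $f^i(\bar x_{i+1})\le f^i(y)$ for every $y$, and that dropping the nonnegative proximal term only decreases values, i.e.\ $f(y)\le f^i(y)$ pointwise.

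To prove~\eqref{eqn:bound_opt_val_iters} I would set $a_i:=f^i(\bar x_{i+1})-f^*$ and derive the one-step recursion $a_i\le a_{i-1}+\tfrac{\lambda_i}{2}\|\bar x_i-x_i\|^2$. The key move is to test the minimality of $\bar x_{i+1}$ against the \emph{previous} proximal minimizer $\bar x_i$:
\[
f^i(\bar x_{i+1})\le f^i(\bar x_i)=f(\bar x_i)+\tfrac{\lambda_i}{2}\|\bar x_i-x_i\|^2 .
\]
One then rewrites $f(\bar x_i)=f^{i-1}(\bar x_i)-\tfrac{\lambda_{i-1}}{2}\|\bar x_i-x_{i-1}\|^2\le f^{i-1}(\bar x_i)$ and subtracts $f^*$, which is exactly the claimed recursion. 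Unrolling from the base case $a_0\le f^0(\bar x_0)-f^*=\tfrac{\lambda_0}{2}\|\bar x_0-x_0\|^2$ (using $\bar x_0=\argmin f$, so $f(\bar x_0)=f^*$) telescopes into~\eqref{eqn:bound_opt_val_iters}. I expect the only delicate point to be keeping the index conventions ($\lambda_{-1}=0$, $\bar x_0=\argmin f$) consistent so that the recursion and base case cohere; once the chaining step $f(\bar x_i)\le f^{i-1}(\bar x_i)$ is spotted, the rest is bookkeeping.

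The decomposition~\eqref{eqn:func_progress} is then immediate. Since $f\le f^j$ pointwise,
\[
f(x_{j+1})-f^*\le f^j(x_{j+1})-f^*=\bigl(f^j(x_{j+1})-f^j(\bar x_{j+1})\bigr)+\bigl(f^j(\bar x_{j+1})-f^*\bigr),
\]
and bounding the final bracket by~\eqref{eqn:bound_opt_val_iters} produces the boxed estimate.

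For the smooth refinement~\eqref{eqn:init_better} I would apply~\eqref{eqn:bound_opt_val_iters} at index $j-1$ to control $f^{j-1}(\bar x_j)-f^*$ by the tail sum, and separately measure the gap between $x_j$ and the proximal minimizer $\bar x_j$ via smoothness. Here $f^{j-1}$ is $(L+\lambda_{j-1})$-smooth and $\bar x_j=\argmin f^{j-1}$ satisfies $\nabla f^{j-1}(\bar x_j)=0$, so the upper estimate in~\eqref{eqn:smooth_conv_est} applied to $f^{j-1}$ gives $f^{j-1}(x_j)-f^{j-1}(\bar x_j)\le\tfrac{L+\lambda_{j-1}}{2}\|\bar x_j-x_j\|^2$. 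Combining this with $f(x_j)\le f^{j-1}(x_j)$ and the index-$(j-1)$ form of~\eqref{eqn:bound_opt_val_iters} yields~\eqref{eqn:init_better}; the degenerate case $j=0$ simply recovers the plain $L$-smoothness bound at $x_0$ under the conventions $\lambda_{-1}=0$, $f^{-1}=f$, $\bar x_0=\argmin f$.
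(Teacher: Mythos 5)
Your proposal is correct and follows essentially the same route as the paper: the recursion $a_i\le a_{i-1}+\tfrac{\lambda_i}{2}\|\bar x_i-x_i\|^2$ obtained by testing $\bar x_{i+1}$ against $\bar x_i$ and chaining $f(\bar x_i)\le f^{i-1}(\bar x_i)$ is exactly the paper's inductive step, and the derivations of \eqref{eqn:func_progress} and \eqref{eqn:init_better} (via $(L+\lambda)$-smoothness of the proximal subproblem) match as well. No gaps.
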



\begin{proof}
    We first establish \eqref{eqn:bound_opt_val_iters} by induction. For the base case $j=0$, observe $\lambda_{-}=0$ and
$$f^{0}(\bar x_{1}) = \min_x f^{0}(x) \leq f^{0}(\bar x_{0})=f^*+\frac{\lambda_0}{2}\|\bar x_{0}-x_0\|^2.$$
As the inductive assumption, suppose \eqref{eqn:bound_opt_val_iters} holds up to iteration $j-1$. We then conclude 
\begin{align*}
    f^{j}(\bar x_{j+1}) \leq f^{j}(\bar x_{j})
&= f(\bar x_{j})+\frac{\lambda_{j}}{2}\|\bar x_{j}-x_{j}\|^2\\
&\leq f^{j-1}(\bar x_{j})+	\frac{\lambda_{j}}{2}\|\bar x_{j}-x_{j}\|^2\leq f^*+ \sum_{i=0}^j\frac{\lambda_{i}}{2}\|\bar x_i-x_i\|^2,
\end{align*}
where the last inequality follows by the inductive assumption. This completes the proof of \eqref{eqn:bound_opt_val_iters}. To see \eqref{eqn:func_progress}, we observe using \eqref{eqn:bound_opt_val_iters} the estimate  
\begin{align*}
f(x_{j+1})-f^*\leq f^{j}(x_{j+1})-f^*&=(f^{j}(x_{j+1})-f^{j}(\bar x_{j+1}))+f^{j}(\bar x_{j+1})-f^*\\ 
&\leq (f^{j}(x_{j+1})-f^{j}(\bar x_{j+1}))+\sum_{i=0}^{j}\frac{\lambda_{i}}{2}\|\bar x_{i}-x_{i}\|^2.
\end{align*}
Finally, if $f$ is $L$-smooth, then $f^j$ is $(L+\lambda_j)$-smooth. 
An analogous result to~\eqref{eqn:smooth_conv_est} yields
$$f^{j}(x_{j+1})-f^{j}(\bar x_{j+1})\leq \frac{L+\lambda_{j}}{2}\|\bar x_{j+1}-x_{j+1}\|^2.$$
Inequality \eqref{eqn:init_better} follows from applying this bound 
in \eqref{eqn:func_progress}.
\end{proof}

The main conclusion of Theorem~\ref{thm:main_proppointest} is the decomposition of the functional error  described in \eqref{eqn:func_progress}. Namely, the estimate \eqref{eqn:func_progress} upper bounds the error $f(x_{j+1})-\min f$ as the sum of the suboptimality in the last step $f^{T}(x_{T+1})-f^{T}(\bar x_{T+1})$ and the errors  
$\frac{\lambda_i}{2}\|\bar x_{i}-x_{i}\|^2$ incurred along the way. By choosing $T$ sufficiently large, we can be sure that the function $f^{T}$ is well-conditioned. Moreover in order to ensure that each term in the sum $\frac{\lambda_i}{2}\|\bar x_{i}-x_{i}\|^2$ is of order $\delta$, it suffices to guarantee $\|\bar x_{i}-x_{i}\|\leq \sqrt{\frac{2\delta}{\lambda_i}}$ for each index $i$. Since $\lambda_i$ is an increasing sequence, it follows that we may gradually decrease the  tolerance on the errors $\|\bar x_{i}-x_{i}\|$, all the while improving the conditioning of the functions we encounter. With this intuition in mind, we introduce the \pboost procedure (Algorithm~\ref{alg:stoc_prox_basic}). The algorithm, and its latter modifications, depend on the amplitude sequence $\{\lambda_j\}_{j=1}^T$ governing the proximal regularization terms. To simplify notation, we will omit this sequence from the algorithm input and instead treat it as a global parameter specified in theorems.

\begin{algorithm}[H]
    {\bf Input:} $\delta\geq 0$, $p\in (0,1)$, $T\in \mathbb{N}$\\
	Set $\lambda_{-1}=0$, $\varepsilon_{-1}=\sqrt{\frac{2\delta}{\mu}}$
	
	Generate a point $x_{0}$ satisfying $\|x_{0}-\bar x_0\|\leq \varepsilon_{-1}$ with probability $1-p$.
	
	\For{ $j=0,\ldots,T-1$}{
	Set $\varepsilon_{j}=\sqrt{\frac{2\delta}{\mu+\lambda_{j}}}$
	
	Generate a point $x_{j+1}$ satisfying 
	\begin{equation}\label{eqn:keyesneededblah}
	\PP\left[\|x_{j+1}-\bar x_{j+1}\|\leq \varepsilon_{j} \mid E_{j}\right]\geq 1-p,
	\end{equation}
	where $E_j$ denotes the event $E_j:=\left\{x_i\in B_{\varepsilon_{i-1}}(\bar x_i)\textrm{ for all }i\in [0,j]\right\}$.}

\smallskip
Generate a point $x_{T+1}$ satisfying
	\begin{equation}\label{eqn:clean_up}
	\PP\left[f^{T}(x_{T+1})-\min f^{T}\leq \delta \mid E_{T}\right]\geq 1-p.
	\end{equation}
	
	{\bf Return} $x_{T+1}$

	\caption{$\cb(\delta, p, T)$	
	}
	\label{alg:stoc_prox_basic}
\end{algorithm}

Thus  \pboost  consists of three stages, which we now examine in detail.


\paragraph{Stage I: Initialization.} 
Algorithm~\ref{alg:stoc_prox_basic} begins by generating a point $x_0$ that is a distance of $\sqrt{\frac{2\delta}{\mu}}$ away from the minimizer of $f$ with probability $1-p$. This task can be achieved by applying a robust distance estimator on $f$, as discussed in Section~\ref{sec:prob_set}.

\paragraph{Stage II: Proximal iterations.}
In each subsequent iteration, $x_{j+1}$ is defined to be a point that is within a radius of $\varepsilon_j=\sqrt{\frac{2\delta}{\mu+\lambda_{j}}}$  from the minimizer of $f^{j}$ with probability $1-p$ conditioned on the event $E_j$.
 The event $E_j$ encodes that each previous iteration was successful in the sense that the point $x_i$ indeed lies inside the ball $B_{\varepsilon_{i-1}}(\bar x_i)$ for all $i=0,\ldots, j$. Thus $x_{j+1}$ can be determined by a procedure that conditioned on the event $E_j$ is a robust distance estimator on the function $f^{j}$.
 
 \paragraph{Stage III: Cleanup.}
 In the final step, the algorithm outputs a $\delta$-minimizer of $f^T$ with probability $1-p$ conditioned on the event $E_{T}$. 
 In particular, if $f$ is $L$-smooth then we may use a robust distance estimator on $f^T$ directly. Namely, taking into account the upper bound in~\eqref{eqn:smooth_conv_est}, we may declare 
$x_{T+1}$ to be any point satisfying 
 $$\PP\left[\|x_{T+1}-\bar x_{T+1}\|\leq \sqrt{\tfrac{2\delta}{L+\lambda_T}}\mid E_{T}\right]\geq 1-p.$$
 Notice that by choosing $\lambda_T$ sufficiently large, we may ensure that the condition number $\tfrac{\mu+\lambda_T}{L+\lambda_T}$ of $f^T$ is arbitrarily close to one. If $f$ is not smooth, such as when constraints or additional regularizers are present, we can not use a robust distance estimator in the cleanup stage. We will see in Section~\ref{sec:conv_comp_ext} a different approach for convex composite problems, based on a modified robust distance estimation technique.

\bigskip

The following theorem summarizes the   guarantees of the $\cb$ procedure.
\begin{theorem}[Proximal Boost]\label{thm:conf_boost_basic}
Fix a constant $\delta>0$, a probability of failure $p\in (0,1)$ and a natural number $T\in \mathbb{N}$.
Then with probability at least $1-(T+2)p$, the point $x_{T+1}=\cb(\delta,p,T)$
	satisfies
	\begin{equation}\label{eqn:gap_bounds}
	f(x_{T+1})-\min f \leq \delta \left(1+ \sum_{i=0}^T \frac{\lambda_i}{\mu+\lambda_{i-1}}\right).
	\end{equation}
\end{theorem}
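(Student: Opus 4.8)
The plan is to split the argument into a probabilistic part, which controls the joint ``success'' event, and a deterministic part, in which the functional error decomposition of Theorem~\ref{thm:main_proppointest} is applied pointwise on that event. The smoothness hypothesis is not needed here; only the decomposition \eqref{eqn:func_progress} and the distance guarantees built into the algorithm are used.

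First I would set up the success events. Write $G:=\{f^{T}(x_{T+1})-\min f^{T}\leq \delta\}$ for the cleanup success, and recall $E_j=\bigcap_{i=0}^{j}\{x_i\in B_{\varepsilon_{i-1}}(\bar x_i)\}$. The crucial structural observation is the nesting identity $E_{j+1}=E_j\cap\{x_{j+1}\in B_{\varepsilon_j}(\bar x_{j+1})\}$, which lets the defining guarantee \eqref{eqn:keyesneededblah} be read as $\PP[E_{j+1}\mid E_j]\geq 1-p$, and which shows the $E_j$ are nested decreasing. Chaining these conditional bounds together with $\PP(E_0)\geq 1-p$ from Stage~I and the cleanup guarantee $\PP[G\mid E_T]\geq 1-p$ of \eqref{eqn:clean_up}, the chain rule for nested events gives a telescoping product
$$\PP(E_T\cap G)=\PP(E_0)\cdot\Big(\prod_{j=0}^{T-1}\PP[E_{j+1}\mid E_j]\Big)\cdot\PP[G\mid E_T]\geq (1-p)^{T+2}.$$
Bernoulli's inequality $(1-p)^{T+2}\geq 1-(T+2)p$ then yields the claimed confidence level. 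I expect this telescoping bookkeeping to be the main place where care is needed: one must verify that there are exactly $T+2$ factors of $(1-p)$ (one from initialization, $T$ from the proximal iterations $j=0,\ldots,T-1$, one from cleanup) and that the conditioning is consistent throughout.

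Next I would argue deterministically on the event $E_T\cap G$. Invoking the error decomposition \eqref{eqn:func_progress} of Theorem~\ref{thm:main_proppointest} with $j=T$,
$$f(x_{T+1})-f^*\leq \big(f^{T}(x_{T+1})-f^{T}(\bar x_{T+1})\big)+\sum_{i=0}^{T}\frac{\lambda_i}{2}\|\bar x_i-x_i\|^2.$$
On $G$ the bracketed term is at most $\delta$, since $f^{T}(\bar x_{T+1})=\min f^{T}$ because $\bar x_{T+1}=\argmin f^{T}$. On $E_T$ each distance obeys $\|x_i-\bar x_i\|\leq\varepsilon_{i-1}=\sqrt{2\delta/(\mu+\lambda_{i-1})}$, where the convention $\lambda_{-1}=0$ makes the case $i=0$ consistent with the general formula; hence $\frac{\lambda_i}{2}\|\bar x_i-x_i\|^2\leq \frac{\lambda_i\delta}{\mu+\lambda_{i-1}}$. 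Substituting these two estimates collapses the right-hand side to $\delta\big(1+\sum_{i=0}^{T}\frac{\lambda_i}{\mu+\lambda_{i-1}}\big)$, which is exactly \eqref{eqn:gap_bounds}. Since this deterministic inequality holds on the event $E_T\cap G$, whose probability is at least $1-(T+2)p$, the theorem follows.
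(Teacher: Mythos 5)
Your proposal is correct and follows essentially the same route as the paper: condition on the nested success events, chain the conditional probabilities to bound $\PP(E_T\cap G)$, and then apply the deterministic decomposition \eqref{eqn:func_progress} on that event. The only cosmetic difference is that you telescope the full product $(1-p)^{T+2}$ and invoke Bernoulli's inequality at the end, whereas the paper carries the linearized bound $1-(t+1)p$ through an induction; the two bookkeeping schemes are equivalent.
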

\begin{proof}
We first prove by induction the estimate 
\begin{equation}\label{eqn:induct_prob}
\PP[E_t]\geq 1-(t+1)p\qquad \textrm{for all }t=0,\ldots, T.
\end{equation}
The base case $t=0$ is immediate from the definition of $x_0$.
Suppose now that \eqref{eqn:induct_prob} holds for some index $t-1$. 
Then the inductive assumption and the definition of $x_t$ yield
$$\PP[E_t]= \PP[E_{t}\bigl\vert E_{t-1}]\PP[E_{t-1}]\geq \left(1-p\right)\left(1-tp\right)\geq 1-(t+1)p,$$
thereby completing the induction. Thus the inequalities \eqref{eqn:induct_prob} hold. Define the event $$F=\{f^{T}(x_{T+1})-\min f^T\leq \delta\}.$$
We therefore deduce  
$$\PP[F\cap E_T]=\PP[F\mid E_T]\cdot \PP[E_T]\geq (1-(T+1)p)(1-p)\geq 1-(T+2)p.$$
Suppose now that the event $F\cap E_{T}$ occurs. Then
using the estimate \eqref{eqn:func_progress}, we conclude 
\begin{equation*}
f(x_{T+1})-\min f\leq (f^{T}(x_{T+1})-f^{T}(\bar x_{T+1}))+\sum_{i=0}^{T} \frac{\lambda_i}{2}\|\bar x_{i}-x_{i}\|^2\leq \delta+\sum_{i=0}^T \frac{\delta\lambda_i}{\mu+\lambda_{i-1}}, 
\end{equation*}
where the last inequality uses the definitions of $x_{T+1}$ and  $\varepsilon_j$. This completes the proof.
%
\end{proof}

Looking  at the estimate~\eqref{eqn:gap_bounds}, we see that the final error $f(x_{T+1})-\min f$ is controlled by the sum $\sum_{i=0}^T \frac{\lambda_i}{\mu+\lambda_{i-1}}$. A moment of thought yields an appealing choice $\lambda_i=\mu 2^i$ for the proximal parameters. Indeed, then every element in the sum $\frac{\lambda_{i}}{\mu+\lambda_{i-1}}$ is upper bounded by two. Moreover, if $f$ is $L$-smooth, then the condition number $\frac{L+\lambda_T}{\mu+\lambda_T}$ of $f^T$ is upper bounded by two after only $T=\lceil\log(L/\mu)\rceil$ rounds.

\begin{corollary}[Proximal boost with geometric decay]
Fix an iteration count $T$, a target accuracy $\epsilon>0$, and a probability of failure $p\in (0,1)$. Define the algorithm parameters:
	 $$\qquad \delta=\frac{\epsilon}{2+2T}\qquad \textrm{and} \qquad \lambda_i=\mu 2^i\qquad \forall i\in [0,T].$$
	Then the point  $x_{T+1}=\cb(\delta,p,T)$
 satisfies 
	$$\PP(f(x_{T+1})-\min f\leq \epsilon)\geq 1-(T+2)p.$$ 
\end{corollary}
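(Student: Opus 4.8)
The plan is to obtain this corollary as an immediate specialization of Theorem~\ref{thm:conf_boost_basic}. That theorem already supplies the probabilistic content: with probability at least $1-(T+2)p$, the point $x_{T+1}=\cb(\delta,p,T)$ satisfies
\[
f(x_{T+1})-\min f \leq \delta\left(1+\sum_{i=0}^T \frac{\lambda_i}{\mu+\lambda_{i-1}}\right).
\]
Since the failure probability $(T+2)p$ here is exactly the one asserted in the corollary, no further probabilistic argument is needed. The whole task therefore reduces to a deterministic estimate: showing that the multiplier $1+\sum_{i=0}^T \frac{\lambda_i}{\mu+\lambda_{i-1}}$ is at most $2+2T$ for the choice $\lambda_i=\mu 2^i$, so that multiplying by $\delta=\frac{\epsilon}{2+2T}$ collapses the right-hand side to $\epsilon$.

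First I would substitute $\lambda_i=\mu 2^i$ into each summand, keeping in mind the standing convention $\lambda_{-1}=0$. The $i=0$ term then evaluates \emph{exactly} to $\frac{\mu 2^0}{\mu+\lambda_{-1}}=\frac{\mu}{\mu}=1$, whereas for $i\geq 1$ each term becomes $\frac{\mu 2^i}{\mu+\mu 2^{i-1}}=\frac{2^i}{1+2^{i-1}}$. The trivial inequality $2^i\leq 2+2^i=2(1+2^{i-1})$ bounds each of these by $2$. Summing the $T$ many terms for $i\geq 1$ together with the leading $1$ and the isolated $i=0$ term gives
\[
1+\sum_{i=0}^T \frac{\lambda_i}{\mu+\lambda_{i-1}}
= 1 + 1 + \sum_{i=1}^T \frac{2^i}{1+2^{i-1}}
\leq 2 + 2T,
\]
and plugging this into the theorem's bound with $\delta=\frac{\epsilon}{2+2T}$ yields $f(x_{T+1})-\min f\leq \delta(2+2T)=\epsilon$, as required.

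The only point demanding genuine care—and the one I expect to be the crux of an otherwise routine argument—is the separate accounting for the $i=0$ term. A naive estimate bounding all $T+1$ summands by $2$ would give a multiplier of $1+2(T+1)=2T+3$, and then $\delta(2T+3)=\frac{(2T+3)\epsilon}{2T+2}>\epsilon$, so the argument would fail to close. The needed slack comes precisely from the fact that the $i=0$ term equals $1$ rather than $2$, which is a consequence of the initialization convention $\lambda_{-1}=0$; this one unit of savings is exactly what is calibrated against the denominator $2+2T$ in the definition of $\delta$. Everything else is elementary algebra.
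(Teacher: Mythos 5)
Your proof is correct and follows the same route as the paper, which states this corollary without a formal proof beyond the remark that each summand $\frac{\lambda_i}{\mu+\lambda_{i-1}}$ is bounded by two. Your separate accounting of the $i=0$ term (which equals $1$ because $\lambda_{-1}=0$) is exactly the refinement needed to get the multiplier down to $2+2T$ rather than $2T+3$, and it correctly calibrates against the choice $\delta=\frac{\epsilon}{2+2T}$.
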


In the next two sections, we seed the \pboost procedure with (accelerated) stochastic gradient algorithms and methods based on empirical risk minimization. The reader, however, should keep in mind that \pboost is entirely agnostic to the inner workings of the robust distance estimators it uses. The only point to be careful about is that some distance estimators (e.g., when using stochastic gradient methods) require auxiliary quantities as input, such as an upper estimate on the function gap at the initial point. Therefore, we may have to update such estimates along the iterations of $\cb$.

\section{Consequences for empirical risk minimization}\label{sec:cons_ERM}
In this section, we explore the consequences of the \pboost algorithm for empirical risk minimization. Setting the stage, fix a probability space $(\Omega,\mathcal{F},\cP)$ and equip $\R^d$ with the Borel $\sigma$-algebra. Consider the optimization problem
\begin{equation}\label{eqn:pop_min}
\min_x f(x)=\EE_{z\sim \cP} \left[f(x,z)\right],
\end{equation}
where $f\colon\R^d\times \Omega\to \R_+$ is a measurable nonnegative function. 
A common approach to problems of the form \eqref{eqn:pop_min} is based on empirical risk minimization. Namely, one collects i.i.d.\ samples $z_1,\ldots, z_n\sim \cP$ and minimizes the empirical average
\begin{equation}\label{eqn:emp_risk}
\min_x f_S(x):=\frac{1}{n}\sum_{i=1}^n f(x,z_i).
\end{equation}
A central question is to determine the number $n$ of samples that would ensure that the minimizer $x_S$ of the empirical risk has low generalization error $f(x_S)-\min f$, with reasonably high probability. There is a vast literature on this subject; some representative works include \cite{hsu2016loss,bartlett2002rademacher,shalev2009stochastic,shalev2014understanding}. We build here on the work of Hsu-Sabato \cite{hsu2016loss}, who specifically focused on high confidence guarantees for smooth strongly convex minimization.
As in the previous sections, we let $\bar x$ be a minimizer of $f$ and define the shorthand $f^*=\min f$. 

\begin{assumption}\label{asmp:erm-smooth-conv}
{\rm 
Following \cite{hsu2016loss}, we make the following assumptions on the loss. 
\begin{enumerate}
	\item {\bf (Strong convexity)} There exist a  real $\mu>0$ and a natural number $N\in \mathbb{N}$ such that:
	\begin{enumerate}
		\item the population loss $f$ is $\mu$-strongly convex,
		\item the empirical loss $x\mapsto f_S(x)$ is $\mu$-strongly convex  with probability at least $5/6$, whenever $|S|\geq N$.
	\end{enumerate}
	\item {\bf (Smoothness)} There exist constants $L, \hat L>0$ such that:
	\begin{enumerate}
		\item for a.e. $z\sim \cP$, the loss $x\mapsto f(x,z)$ is $\hat L$-smooth,
        \item the population objective $x\mapsto f(x)$ is $L$-smooth. (It holds that $L\leq \hat L$.)
	\end{enumerate}
\end{enumerate}
In addition, we assume $f^*:=\min f >0$. 
}
\end{assumption}

The following result proved in \cite[Theorem 15]{hsu2016loss} shows that the empirical risk minimizer is a weak distance oracle for the problem \eqref{eqn:pop_min}.

\begin{lemma}\label{lem:single_est_get_it}
    Fix an i.i.d.\ sample $z_1,\ldots,z_n\sim \cP$ of size $n\geq N$. Suppose Assumption~\ref{asmp:erm-smooth-conv} holds. Then the minimizer $x_S$ of the empirical risk \eqref{eqn:emp_risk} satisfies the bound:
		$$\PP\left[\|x_S-\bar x\|\leq \sqrt{\frac{96 \hat L f^*}{n\mu^2}}~\right]\geq 2/3.$$ 
\end{lemma}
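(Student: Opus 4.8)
The plan is to reduce the distance $\|x_S-\bar x\|$ to the size of the empirical gradient $\nabla f_S(\bar x)$ evaluated at the \emph{population} minimizer, and then to control that gradient by a variance computation. On the event that $f_S$ is $\mu$-strongly convex (which holds with probability at least $5/6$ by Assumption~\ref{asmp:erm-smooth-conv}), the minimizer $x_S$ satisfies $\nabla f_S(x_S)=0$, and monotonicity of the gradient of a $\mu$-strongly convex function gives $\mu\|x_S-\bar x\|^2\leq \langle \nabla f_S(x_S)-\nabla f_S(\bar x),\,x_S-\bar x\rangle = -\langle \nabla f_S(\bar x),\,x_S-\bar x\rangle$. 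Cauchy--Schwarz then yields the key deterministic bound $\|x_S-\bar x\|\leq \frac{1}{\mu}\|\nabla f_S(\bar x)\|$. Thus it suffices to show that $\|\nabla f_S(\bar x)\|$ is small with good probability.

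The crucial observation is that $\nabla f_S(\bar x)=\frac{1}{n}\sum_{i=1}^n \nabla f(\bar x,z_i)$ is an empirical average of i.i.d.\ vectors that are \emph{mean zero}, since $\EE[\nabla f(\bar x,z)]=\nabla f(\bar x)=0$ because $\bar x$ minimizes $f$. Hence the cross terms vanish and $\EE\|\nabla f_S(\bar x)\|^2=\frac{1}{n}\,\EE\|\nabla f(\bar x,z)\|^2$. To bound the per-sample second moment I would invoke the self-bounding property of nonnegative smooth functions: since $f(\cdot,z)\geq 0$ is $\hat L$-smooth, the descent inequality applied at $\bar x$ with the point $\bar x-\frac{1}{\hat L}\nabla f(\bar x,z)$ gives $\|\nabla f(\bar x,z)\|^2\leq 2\hat L\,f(\bar x,z)$. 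Taking expectations and using $\EE[f(\bar x,z)]=f(\bar x)=f^*$ produces $\EE\|\nabla f(\bar x,z)\|^2\leq 2\hat L f^*$, and therefore $\EE\|\nabla f_S(\bar x)\|^2\leq \frac{2\hat L f^*}{n}$.

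It remains to assemble a high-probability statement. Applying Markov's inequality to the nonnegative random variable $\|\nabla f_S(\bar x)\|^2$ gives $\PP[\|\nabla f_S(\bar x)\|^2>12\hat L f^*/n]\leq \frac{1}{6}$. Combining by a union bound with the strong-convexity event (which fails with probability at most $1/6$) shows that, with probability at least $1-\frac{1}{6}-\frac{1}{6}=\frac{2}{3}$, both $\|x_S-\bar x\|\leq \frac{1}{\mu}\|\nabla f_S(\bar x)\|$ and $\|\nabla f_S(\bar x)\|^2\leq 12\hat L f^*/n$ hold, whence $\|x_S-\bar x\|^2\leq \frac{12\hat L f^*}{n\mu^2}\leq \frac{96\hat L f^*}{n\mu^2}$, as claimed.

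The argument is mostly routine; the step requiring the most care is the probabilistic bookkeeping. In particular, the strong-convexity event and the small-gradient event need not be independent, so I would combine them by a union bound rather than by multiplying probabilities, splitting the total failure budget of $1/3$ evenly between them. The only genuinely non-elementary ingredient is the self-bounding gradient inequality; it is worth emphasizing that this uses nonnegativity and smoothness of $f(\cdot,z)$ but \emph{not} its convexity, and that the stated constant $96$ is comfortably loose (the clean analysis yields $12$), leaving ample slack to absorb any suboptimal choices in the two probability splits.
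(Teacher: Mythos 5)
Your proof is correct: the reduction $\|x_S-\bar x\|\leq \frac{1}{\mu}\|\nabla f_S(\bar x)\|$ via strong monotonicity, the self-bounding bound $\EE\|\nabla f(\bar x,z)\|^2\leq 2\hat L f^*$ from nonnegativity and $\hat L$-smoothness, and the Markov-plus-union-bound bookkeeping all check out, and in fact yield the sharper constant $12$ in place of $96$. The paper does not prove this lemma itself but cites it as Theorem 15 of Hsu--Sabato, and your argument is essentially a self-contained reproduction of that reference's proof, so there is no substantive difference in approach.
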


In particular, using Algorithm~\ref{alg:stoc_prox_high_prob} one may turn empirical risk minimization into a robust distance estimator for the problem \eqref{sec:cons_ERM} using a total of $mn$ samples. Let us estimate the function value at the generated point by a direct application of smoothness.
Appealing to Lemma~\ref{lem:rob_dist_est} and the two-sided bound \eqref{eqn:smooth_conv_est}, we  deduce that with probability $1-\exp(-m/18)$ the procedure will return a point $x$ satisfying 
$$f(x)\leq \left(1+\frac{432\hat L L}{n\mu^2}\right)f^*.$$
Observe that this is an estimate of {\em relative error}.
In particular, let $p\in (0,1)$ be some acceptable probability of failure and let $\gamma >0$ be a desired level of relative accuracy. Then setting  $m=\lceil 18\ln(1/p)\rceil$ and $n\geq\max\{\frac{432\hat \kappa \kappa}{\gamma},N\}$, we conclude that $x$ satisfies 
\begin{equation}\label{eqn:comp_base_erm}
\PP[f(x)\leq (1+\gamma) f^*]\geq 1-p,
\end{equation}
while the overall sample complexity  is 
\begin{equation}\label{eqn:ezest}
\left\lceil 18\ln\left(\frac{1}{p}\right)\right\rceil\cdot  \max\left\{\left\lceil\frac{432\hat\kappa \kappa}{\gamma}\right\rceil, N\right\}, 
\end{equation}
where $\hat\kappa=\hat L/\mu$ and $\kappa={L}/\mu$.
This is exactly the result \cite[Corollary 16]{hsu2016loss}. 

We will now see how to find a point $x$ satisfying~\eqref{eqn:comp_base_erm} with significantly fewer samples by embedding empirical risk minimization within  \cb. Algorithm~\ref{alg:stoc_prox_high_prob0} encodes the empirical risk minimization process on a quadratically regularized problem. Algorithm~\ref{alg:stoc_prox_high_prob1} is the robust distance estimator induced by Algorithm~\ref{alg:stoc_prox_high_prob0}. Finally, Algorithm~\ref{alg:stoc_prox_high_prob3} is the \pboost algorithm specialized to empirical risk minimization.

\begin{algorithm}[H]
	{\bf Input:}  sample count $n\in \mathbb{N}$, center $x\in\R^d$, amplitude $\lambda>0$.
	
	Generate i.i.d. samples $z_1,\ldots, z_n\sim \cP$ and compute the minimizer $\bar y$ of 
	$$\min_{y}~\frac{1}{n}\sum_{i=1}^n f(y,z_i)+\frac{\lambda}{2}\|y-x\|^2.$$
	{\bf Return} $\bar y$		\\
	\caption{$\erm(n,\lambda,x)$	
	}
	\label{alg:stoc_prox_high_prob0}
\end{algorithm}

\begin{algorithm}[H]
	{\bf Input:}  sample count $n\in \mathbb{N}$, trial count $m\in \mathbb{N}$, center $x\in\R^d$, amplitude $\lambda>0$.  \\
	Query $m$ times $\erm(n,\lambda,x)$ and let $Y=\{y_1,\ldots, y_m\}$ consist of the responses.
	
	{\bf Step } $j=1,\ldots,m$:\\
	\hspace{20pt} Compute $r_i=\min\{r\geq 0: |B_{r}(y_i)\cap Y|>\frac{m}{2}\}$.
	
	Set $i^*=\argmin_{i\in [1,m]} r_i$ 
	
	{\bf Return} $y_{i^*}$		\\
	\caption{$\rerm(n,m,\lambda,x)$	
	}
	\label{alg:stoc_prox_high_prob1}
\end{algorithm}

\begin{algorithm}[H]
	{\bf Input:} $T,m\in \mathbb{N}$, $\gamma> 0$ \\
    Set $\lambda_{-1}=0$, $x_{-1}=0$, $n_{-1}=\frac{432\hat L}{\gamma\mu}$
	
	{\bf Step } $j=0,\ldots,T$:\\
	\hspace{20pt}  $x_{j}=\rerm(n_{j-1},m,\lambda_{j-1},x_{j-1})$\\
	\hspace{20pt}	$n_{j}= 432 \left\lceil\frac{\hat L+\lambda_{j}}{\mu+\lambda_j}\left(\frac{1}{\gamma}+\sum_{i=0}^j\frac{\lambda_i}{\mu+\lambda_{i-1}} \right)\right\rceil\vee N$
	
	{\bf Return} $x_{T+1}=\rerm(\frac{L+\lambda_T}{\mu+\lambda_T}\cdot n_{T},m,\lambda_{T},x_{T})$

	\caption{$\cberm(\gamma, T,m)$	
	}
	\label{alg:stoc_prox_high_prob3}
\end{algorithm}

Using Theorem~\ref{thm:conf_boost_basic}, we can now prove the following result.

\begin{theorem}[Efficiency of $\cberm$]
	Fix a target relative accuracy $\gamma>0$ and numbers $T,m\in \mathbb{N}$.
	Then with probability at least $1-(T+2)\exp\left(-\frac{m}{18}\right)$, the point $x_{T+1}=\cberm(\gamma,T,m)$
	satisfies
	$$f(x_{T+1})-f^*\leq \left(1+\sum_{i=0}^T \frac{\lambda_i}{\mu+\lambda_{i-1}}\right)\gamma f^*.$$
\end{theorem}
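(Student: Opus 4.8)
The plan is to recognize $\cberm(\gamma,T,m)$ as a concrete instance of the generic $\cb(\delta,p,T)$ procedure under the identifications $\delta=\gamma f^*$ and $p=\exp(-m/18)$, so that the conclusion follows directly from Theorem~\ref{thm:conf_boost_basic}. That theorem guarantees that whenever each of the three stages of $\cb$ meets its prescribed conditional tolerance with failure probability at most $p$, the output obeys $f(x_{T+1})-f^*\leq \delta\left(1+\sum_{i=0}^T\frac{\lambda_i}{\mu+\lambda_{i-1}}\right)$ with probability at least $1-(T+2)p$. Substituting $\delta=\gamma f^*$ and $p=\exp(-m/18)$ reproduces exactly the claimed bound and the claimed probability. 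Thus the entire proof reduces to checking that each $\rerm$ call inside $\cberm$ realizes the stage-$j$ distance tolerance $\varepsilon_j=\sqrt{2\delta/(\mu+\lambda_j)}$ (and the cleanup accuracy) with conditional failure probability at most $\exp(-m/18)$, as demanded by~\eqref{eqn:keyesneededblah} and~\eqref{eqn:clean_up}.

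The core verification has three substeps. First, I would observe that each regularized objective $f^j(x)=f(x)+\frac{\lambda_j}{2}\|x-x_j\|^2$ inherits a version of Assumption~\ref{asmp:erm-smooth-conv}: its per-sample losses are nonnegative and $(\hat L+\lambda_j)$-smooth, the population objective is $(\mu+\lambda_j)$-strongly convex and $(L+\lambda_j)$-smooth, the regularized empirical loss is $(\mu+\lambda_j)$-strongly convex with probability at least $5/6$, and $\min f^j\geq f^*>0$. Hence Lemma~\ref{lem:single_est_get_it} applies to $f^j$ with the substitutions $(\mu,\hat L,f^*)\mapsto(\mu+\lambda_j,\hat L+\lambda_j,\min f^j)$, so a single $\erm(n_j,\lambda_j,x_j)$ call lands within $\sqrt{96(\hat L+\lambda_j)\min f^j/(n_j(\mu+\lambda_j)^2)}$ of $\bar x_{j+1}$ with probability $2/3$; Lemma~\ref{lem:rob_dist_est} then boosts this so that $\rerm(n_j,m,\lambda_j,x_j)$ returns $x_{j+1}$ within $\sqrt{864(\hat L+\lambda_j)\min f^j/(n_j(\mu+\lambda_j)^2)}$ of $\bar x_{j+1}$ with probability $1-\exp(-m/18)$.

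Second, I would bound the data-dependent quantity $\min f^j$ on the success event $E_j$. On $E_j$ we have $\|\bar x_i-x_i\|\leq\varepsilon_{i-1}=\sqrt{2\delta/(\mu+\lambda_{i-1})}$ for all $i\leq j$, so estimate~\eqref{eqn:bound_opt_val_iters} of Theorem~\ref{thm:main_proppointest} gives $\min f^j-f^*\leq\sum_{i=0}^j\frac{\lambda_i}{2}\|\bar x_i-x_i\|^2\leq\sum_{i=0}^j\frac{\delta\lambda_i}{\mu+\lambda_{i-1}}$, whence, with $\delta=\gamma f^*$, one obtains $\min f^j\leq\gamma f^*\big(\frac{1}{\gamma}+\sum_{i=0}^j\frac{\lambda_i}{\mu+\lambda_{i-1}}\big)$. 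Third, inserting this into the distance estimate and checking the elementary algebra shows the prescribed sample counts — $n_{-1}=432\hat L/(\gamma\mu)$ for the initializer, $n_j=432\lceil\frac{\hat L+\lambda_j}{\mu+\lambda_j}(\frac{1}{\gamma}+\sum_{i=0}^j\frac{\lambda_i}{\mu+\lambda_{i-1}})\rceil\vee N$ for the proximal steps, and $\frac{L+\lambda_T}{\mu+\lambda_T}n_T$ for cleanup — are exactly calibrated so that $\sqrt{864(\hat L+\lambda_j)\min f^j/(n_j(\mu+\lambda_j)^2)}\leq\varepsilon_j$, and so that the cleanup draw achieves $\|x_{T+1}-\bar x_{T+1}\|\leq\sqrt{2\delta/(L+\lambda_T)}$, which by $(L+\lambda_T)$-smoothness of $f^T$ forces $f^T(x_{T+1})-\min f^T\leq\delta$ as required by~\eqref{eqn:clean_up}. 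The $\vee N$ secures the hypothesis $n_j\geq N$ of Lemma~\ref{lem:single_est_get_it}.

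The step I expect to be the main obstacle is the conditioning argument that upgrades the per-step guarantees to the conditional statements~\eqref{eqn:keyesneededblah} and~\eqref{eqn:clean_up}. The difficulty is that both $\min f^j$ and the target $\bar x_{j+1}$ are random, depending on the center $x_j$ produced by earlier rounds, whereas the bound on $\min f^j$ holds only on $E_j$. The resolution is that $\rerm$ at step $j$ draws fresh samples independent of the history generating $x_j$ (and hence of $E_j$ and $\bar x_{j+1}$): conditioning on that history, $f^j$ is a fixed problem to which substep one applies verbatim, and restricting to the $E_j$-subevent makes the bound on $\min f^j$ deterministic, so $n_j$ suffices. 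Averaging the conditional-on-history guarantee over $E_j$ then yields precisely $\PP[\|x_{j+1}-\bar x_{j+1}\|\leq\varepsilon_j\mid E_j]\geq 1-\exp(-m/18)$, and identically for cleanup. With these conditional tolerances in hand, Theorem~\ref{thm:conf_boost_basic} closes the argument.
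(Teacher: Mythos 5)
Your proposal is correct and follows essentially the same route as the paper's proof: instantiate $\cb$ with $\delta=\gamma f^*$ and $p=\exp(-m/18)$, combine Lemma~\ref{lem:single_est_get_it} (applied to the regularized objectives) with Lemma~\ref{lem:rob_dist_est}, bound $\min f^{j}$ on $E_j$ via \eqref{eqn:bound_opt_val_iters}, check that the sample counts calibrate the distances to $\varepsilon_j$, and invoke Theorem~\ref{thm:conf_boost_basic}. Your explicit treatment of the conditioning/independence issue in the last paragraph is in fact more careful than what the paper writes.
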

\begin{proof}
    We will verify that Algorithm~\ref{alg:stoc_prox_high_prob3} is an instantiation of Algorithm~\ref{alg:stoc_prox_basic} with $\delta=\gamma f^*$ and $p=\exp(-\frac{m}{18})$. More precisely, we will prove by induction that with this choice of $p$ and $\delta$, the iterates $x_j$ satisfy \eqref{eqn:keyesneededblah} for each index $j=0,\ldots, T$ and $x_{T+1}$ satisfies \eqref{eqn:clean_up}. As the base case, consider the evaluation $x_{0}=\rerm(n_{-1},m,\lambda_{-1},x_{-1})$ where $x_{-1}$ can be arbitrary since $\lambda_{-1}=0$. Then Lemma~\ref{lem:rob_dist_est} and Theorem~\ref{lem:single_est_get_it}  guarantee
	$$\PP\left[\|x_0-\bar x_0\|\leq  3\sqrt{\frac{96 \hat L f^*}{n_{-1}\mu^2}}\right]\geq 1-\exp\left(-\frac{m}{18}\right).$$
    Taking into account the definitions of $n_{-1}$ in Algorithm~\ref{alg:stoc_prox_high_prob3} and $\varepsilon_{-1}$ in Algorithm~\ref{alg:stoc_prox_basic}, we deduce 
	$$\PP\left[\|x_0-\bar x_0\|\leq  \epsilon_{-1}\right]\geq 1-p,$$
	as claimed. As an inductive hypothesis, suppose that \eqref{eqn:keyesneededblah} holds for $x_0,x_1,\ldots, x_{j-1}$. We will prove it holds for $x_j=\rerm(n_{j-1},m,\lambda_{j-1},x_{j-1})$. To this end, suppose that the event $E_{j-1}$ occurs.
	 Then by the same reasoning as in the base case, the point $x_j$ satisfies 
	 \begin{equation}\label{eqn:getinductwork}
	\PP\left[\|x_j-\bar x_j\|\leq  3\sqrt{\frac{96 (\hat L+\lambda_{j-1}) f^{j-1}(\bar x_j)}{n_{j-1}(\mu+\lambda_{j-1})^2}}\right]\geq 1-\exp\left(-\frac{m}{18}\right).
	\end{equation}
    Now, using \eqref{eqn:bound_opt_val_iters} and the inductive assumption that $\|x_i-\bar{x}_i\|\leq\varepsilon_{i-1}=\sqrt{\frac{2\delta}{\mu+\lambda_{i-1}}}$ for all $i\in[0,j-1]$ (conditioned on $E_{j-1}$), we have
	$$f^{j-1}(\bar x_{j})-f^*\leq  \sum_{i=0}^{j-1}\frac{\lambda_{i}}{2}\|\bar x_{i}-x_{i}\|^2\leq \delta\sum_{i=0}^{j-1}\frac{ \lambda_i}{\mu+\lambda_{i-1}},$$
which, together with $\delta=\gamma f^*$, implies
\[
    f^{j-1}(\bar x_j) \leq f^* + \delta\sum_{i=0}^{j-1}\frac{ \lambda_i}{\mu+\lambda_{i-1}} = \left(1+\gamma\sum_{i=0}^{j-1}\frac{ \lambda_i}{\mu+\lambda_{i-1}}\right) f^*. 
\]
Combining this inequality with \eqref{eqn:getinductwork}, we conclude that conditioned on the event $E_{j-1}$, we have with probability $1-p$ the guarantee
	\begin{equation}\label{eqn:erm_middle}
	\frac{\mu+\lambda_{j-1}}{2}\|x_{j}-\bar x_{j}\|^2\leq \frac{432(\hat L+\lambda_{j-1})(1+\gamma\sum_{i=0}^{j-1}\frac{ \lambda_i}{\mu+\lambda_{i-1}})}{n_{j-1}(\mu+\lambda_{j-1})}\cdot f^*\leq\gamma f^*=\delta,
	\end{equation}
	where the last inequality follows from the definition of $n_{j-1}$. 
    This implies that the estimate \eqref{eqn:keyesneededblah} holds for $x_j$ with $\epsilon_{j-1}=\sqrt{\frac{2\delta}{\mu+\lambda_{j-1}}}$.
    Therefore, it holds for all iterates $x_0,\ldots, x_T$, as needed. 
    Suppose now that that event $E_T$ occurs. Then by exactly the same reasoning that led to \eqref{eqn:erm_middle}, and considering the extra factor $\frac{L+\lambda_T}{\mu+\lambda_T}$ multiplied to $n_T$ in the last call of \rerm, we have the estimate 
	$$\frac{\mu+\lambda_{T}}{2}\|x_{T+1}-\bar x_{T+1}\|^2\leq \frac{\mu+\lambda_{T}}{L+\lambda_{T}}\gamma f^*.$$
	Using smoothness, we therefore deduce $f^T(x_{T+1})-\min f^{T}\leq \gamma f^*=\delta$, as claimed.
	An application of Theorem~\ref{thm:conf_boost_basic} completes the proof.
\end{proof}

Finally, using the proximal parameters $\lambda_i=\mu 2^i$ yields the following guarantee.

\begin{corollary}[Efficiency of $\cberm$ with geometric decay]\label{cor:eff_erm}
	Fix a target relative accuracy $\gamma'>0$ and a probability of failure $p\in (0,1)$. Define the algorithm parameters:
	$$T=\left\lceil \log_{2}\left( \kappa\right)\right\rceil,\qquad m=\left\lceil 18\ln\left(\frac{T+2}{p}\right)\right\rceil, \qquad \gamma=\frac{\gamma'}{2+2T},\qquad \lambda_{i}=\mu 2^i.$$
	Then with probability of at least $1-p$, the point $x_{T+1}=\cberm(\gamma,T,m)$ satisfies $f(x^{T+1})\leq (1+\gamma')f^*$. Moreover, the total number of samples used by the algorithm is 
$$\mathcal{O}\left(\ln( \kappa)\ln\left(\frac{\ln(\kappa)}{p}\right)\cdot\max\left\{\left(1+\tfrac{1}{\gamma'}\right)\hat\kappa\ln(\kappa),N\right\}\right).$$
\end{corollary}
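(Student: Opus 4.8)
The plan is to obtain the corollary as a direct specialization of the preceding Theorem (Efficiency of $\cberm$), combined with an explicit accounting of the samples drawn across the $T+2$ calls to $\rerm$ inside Algorithm~\ref{alg:stoc_prox_high_prob3}. There are two separate things to pin down: the high-probability accuracy guarantee, and the total sample count. The only structural input from the geometric schedule is that every ratio $\frac{\lambda_i}{\mu+\lambda_{i-1}}$ is uniformly bounded, which I would verify first.

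For accuracy and confidence I would instantiate the theorem with the stated parameters. With $\lambda_i=\mu 2^i$ the $i=0$ ratio equals $\frac{\mu}{\mu}=1$ and for $i\ge 1$ one has $\frac{\mu 2^i}{\mu+\mu 2^{i-1}}=\frac{2^i}{1+2^{i-1}}\le 2$, so $\sum_{i=0}^T\frac{\lambda_i}{\mu+\lambda_{i-1}}\le 1+2T$ and hence the theorem's prefactor obeys $1+\sum_{i=0}^T\frac{\lambda_i}{\mu+\lambda_{i-1}}\le 2+2T$. Since $\gamma=\frac{\gamma'}{2+2T}$, this prefactor times $\gamma$ is at most $\gamma'$, giving $f(x_{T+1})-f^*\le\gamma'f^*$ on the success event. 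For confidence, the theorem's failure probability is $(T+2)\exp(-m/18)$, and the choice $m=\lceil 18\ln\frac{T+2}{p}\rceil$ forces $\exp(-m/18)\le\frac{p}{T+2}$, so the failure probability is at most $p$.

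The substantive part is the sample count. Tracing Algorithm~\ref{alg:stoc_prox_high_prob3}, each loop call $\rerm(n_{j-1},m,\lambda_{j-1},x_{j-1})$ consumes $m\,n_{j-1}$ samples for $j=0,\dots,T$, and the cleanup call consumes $m\cdot\frac{L+\lambda_T}{\mu+\lambda_T}\,n_T$, so the total is $m\bigl(n_{-1}+\sum_{j=0}^{T-1}n_j+\frac{L+\lambda_T}{\mu+\lambda_T}n_T\bigr)$. I would bound each $n_j$ using two estimates: first $\frac{\hat L+\lambda_j}{\mu+\lambda_j}\le\hat\kappa$, since $x\mapsto\frac{\hat L+x}{\mu+x}$ is decreasing and equals $\hat\kappa$ at $x=0$; and second $S_j:=\frac1\gamma+\sum_{i=0}^j\frac{\lambda_i}{\mu+\lambda_{i-1}}\le\frac{2+2T}{\gamma'}+2(T+1)=2(T+1)(1+\tfrac1{\gamma'})$, using the ratio bound above and $\gamma=\frac{\gamma'}{2+2T}$. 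These give $n_j=O\bigl(\max\{\hat\kappa(T+1)(1+\tfrac1{\gamma'}),\,N\}\bigr)$ uniformly in $j$, and likewise $n_{-1}=\frac{432\hat\kappa}{\gamma}=O(\hat\kappa T/\gamma')$. For the cleanup term I would use $T=\lceil\log_2\kappa\rceil$, so $2^T\ge\kappa$ and therefore $\frac{L+\lambda_T}{\mu+\lambda_T}=\frac{\kappa+2^T}{1+2^T}\le 2$, making that term no larger than $O(n_T)$.

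Summing the $T+1$ terms $n_{-1},n_0,\dots,n_{T-1}$ together with the $O(1)$-scaled cleanup term yields a bracket of size $O\bigl((T+1)\max\{\hat\kappa(T+1)(1+\tfrac1{\gamma'}),\,N\}\bigr)$; multiplying by $m=O(\ln\frac{T+2}{p})$ and substituting $T+1=O(\ln\kappa)$ reproduces exactly the advertised $\mathcal{O}\bigl(\ln\kappa\,\ln\frac{\ln\kappa}{p}\cdot\max\{(1+\tfrac1{\gamma'})\hat\kappa\ln\kappa,\,N\}\bigr)$. The main obstacle is precisely this bookkeeping: one must check that the two logarithmic factors in $\kappa$ come from genuinely distinct sources — the number $T\approx\ln\kappa$ of proximal rounds, and the running-sum factor $S_j\approx T(1+\tfrac1{\gamma'})$ sitting inside every $n_j$ — and one must handle the per-term $\vee N$ so that the $N$-dominated case contributes only $O(TN)=O(\ln\kappa\cdot N)$. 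I note as an aside that the crude estimate $\frac{\hat L+\lambda_j}{\mu+\lambda_j}\le\hat\kappa$ is what produces the stated bound; a sharper telescoping estimate, using $\sum_{j}\frac{\hat L+\lambda_j}{\mu+\lambda_j}=(T+1)+(\hat\kappa-1)\sum_j\frac{1}{1+2^j}=O(T+\hat\kappa)$, would save a factor of $\ln\kappa$ on the $\hat\kappa$ term but is not needed for the claimed big-$\mathcal{O}$.
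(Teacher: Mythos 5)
Your proposal is correct and follows the same (and essentially the only natural) route the paper intends: specialize the preceding theorem using the uniform bound $\frac{\lambda_i}{\mu+\lambda_{i-1}}\le 2$ so that $(1+\sum_{i=0}^T\frac{\lambda_i}{\mu+\lambda_{i-1}})\gamma\le(2+2T)\gamma=\gamma'$ and $(T+2)\exp(-m/18)\le p$, then tally the $m\,n_{j}$ samples per call to $\rerm$ using $\frac{\hat L+\lambda_j}{\mu+\lambda_j}\le\hat\kappa$, the bound $S_j\le 2(T+1)(1+\frac{1}{\gamma'})$, and $\frac{L+\lambda_T}{\mu+\lambda_T}\le 2$. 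The bookkeeping correctly identifies the two distinct sources of the $\ln^2\kappa$ factor, and the aside about the sharper telescoping bound is accurate but, as you note, unnecessary for the stated estimate.
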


Notice that the sample complexity provided by Corollary~\ref{cor:eff_erm} is an order of magnitude better than \eqref{eqn:ezest} in terms of the dependence on the condition numbers $\hat\kappa$ and $ \kappa$.

\section{Consequences for stochastic approximation}\label{sec:conseq_approx}
We next investigate the consequences of \pboost for stochastic approximation. Namely, we will seed \pboost with the robust distance estimator,  induced by the stochastic gradient method and its accelerated variant. 
An important point is that the sample complexity of stochastic gradient methods depends on the initialization quality $f(x_0)-f^*$. Consequently, in order to know how many iterations are needed to reach a desired accuracy $\EE [f(x_i)]-f^*\leq \delta$, we must have available an upper bound on the initialization quality $\Delta\geq f(x_0)-f^*$. Therefore, we will have to dynamically update an estimate of the initialization quality for each proximal subproblem along the iterations of \pboostn. The following assumption formalizes this idea.

\begin{assumption}\label{ass:alg_min_orc}{\em
Consider the proximal minimization problem
$$\min_y~ \varphi_x(y):=f(y)+\frac{\lambda}{2}\|y- x\|^2,$$
Let $\Delta>0$ be a real number satisfying $\varphi_x(x)-\min \varphi_x\leq \Delta$.
We will let $\alg(\delta,\lambda,\Delta, x)$ be a procedure that returns a point $y$ satisfying 
$$\PP[\varphi_x(y)-\min \varphi_x\leq \delta]\geq\frac{2}{3}.$$}
\end{assumption}

Clearly, $\alg(\delta,\lambda,\Delta, x)$ is a minimization oracle in the sense of~\eqref{eqn:conf_bound}. Since the proximal function $\varphi_x$ is $(\mu+\lambda)$-strongly convex, it has a unique minimizer $\bar y_x$ and satisfies
$$\frac{\mu+\lambda}{2}\|y-\bar y_x\|^2 \leq\varphi_x(y)-\min\varphi_x.$$
Therefore, $\alg(\delta,\lambda,\Delta, x)$ is a weak distance oracle, in the sense that $\PP(\|y-\bar y_x\|\leq\varepsilon)\geq \frac{2}{3}$ with $\varepsilon=\sqrt{\frac{2\delta}{\mu+\lambda}}$.
Following the procedure in Section~\ref{sec:prob_set}, we may turn it into a robust distance estimator for minimizing $\varphi_x$, as long as $\Delta$ upper bounds the initialization error. We record the robust distance estimator induced by $\alg(\cdot)$ as Algorithm~\ref{alg:stoc_prox_high_prob2str}.

\begin{algorithm}[H]
	{\bf Input:} accuracy $\delta>0$, amplitude $\lambda>0$, upper bound $\Delta>0$,
	center $x\in\R^d$,\\
	\hspace{20pt} trial count $m\in \mathbb{N}$.  \\
	Query $m$ times $\alg(\delta,\lambda,\Delta,x)$ and let $Y=\{y_1,\ldots, y_m\}$ consist of the responses.
	
	{\bf Step } $j=1,\ldots,m$:\\
	\hspace{20pt} Compute $r_i=\min\{r\geq 0: |B_{r}(y_i)\cap Y|>\frac{m}{2}\}$.
	
	Set $i^*=\argmin_{i\in [1,m]} r_i$ 
	
	{\bf Return} $y_{i^*}$		\\
	\caption{\ralg$(\delta,\lambda,\Delta,x,m)$	
	}
	\label{alg:stoc_prox_high_prob2str}
\end{algorithm}

Henceforth, in addition to Assumptions~\ref{ass:strong_conv} and \ref{ass:alg_min_orc}, we assume that $f$ is $L$-smooth and set $\kappa=\frac{L}{\mu}$.
It is  then straightforward to instantiate \pboost with the robust distance estimator $\ralg$. We record the resulting procedure as Algorithm~\ref{alg:stoc_prox_high_prob_stoc_appr}.

\begin{algorithm}[H]
	{\bf Input:}  accuracy $\delta>0$, upper bound $\Delta_{\rm in}>0$,
	center $x_{\rm in}\in\R^d$, and $m,T\in\mathbb{N}$\\
	
	Set $\lambda_{-1}=0$, $\Delta_{-1}=\Delta_{\rm in}$, $x_{-1}= x_{\rm in}$
	
	{\bf Step } $j=0,\ldots,T$:\\
	\hspace{20pt}  $x_{j}=\ralg(\delta/9,\lambda_{j-1},\Delta_{j-1},x_{j-1},m)$
	
	\hspace{20pt} $\Delta_j=\delta\left(\frac{L+\lambda_{j-1}}{\mu+\lambda_{j-1}}+\sum_{i=0}^{j-1}\frac{ \lambda_i}{\mu+\lambda_{i-1}}\right)$

	{\bf Return} $x_{T+1}=\ralg(\frac{\mu+\lambda_T}{L+\lambda_T}\cdot\frac{\delta}{9},\lambda_{T},\Delta_{T},x_{T},m)$

    \caption{$\balg(\delta,\Delta_{\rm in}, x_{\rm in},T,m)$	
	}
	\label{alg:stoc_prox_high_prob_stoc_appr}
\end{algorithm}

We can now prove the following theorem on the efficiency of Algorithm~\ref{alg:stoc_prox_high_prob_stoc_appr}. The proof is almost a direct application of Theorem~\ref{thm:conf_boost_basic}. The only technical point is to verify that for all indices $j$, the quantity $\Delta_{j}$ is a valid upper bound on the initialization error $f^{j}(x_{j})-\min f^{j}$ in the event $E_{j}$ (defined in Algorithm~\ref{alg:stoc_prox_basic}).

\begin{theorem}[Efficiency of \balg]\label{thm:effbalg}
Fix an arbitrary point $x_{\rm in}\in \R^d$ and let $\Delta_{\rm in}$ be any constant satisfying $\Delta_{\rm in}\geq f(x_{\rm in})-\min f$. 
Fix natural numbers $T,m\in \mathbb{N}$. 
Then with probability at least $1-(T+2)\exp\left(-\frac{m}{18}\right)$, the point $x_{T+1}=\balg(\delta,\Delta_{\rm in}, x_{\rm in},T,m)$ satisfies 
$$f(x_{T+1})-\min f\leq \delta \left(1+\sum_{i=0}^T \frac{\lambda_i}{\mu+\lambda_{i-1}}\right).$$
\end{theorem}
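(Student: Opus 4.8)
The plan is to recognize \balg (Algorithm~\ref{alg:stoc_prox_high_prob_stoc_appr}) as a concrete instantiation of the generic \pboost procedure (Algorithm~\ref{alg:stoc_prox_basic}) run with failure probability $p=\exp(-m/18)$ and the given tolerance $\delta$, and then to quote Theorem~\ref{thm:conf_boost_basic} verbatim. Concretely, I would show that the iterates produced by \balg satisfy the two defining requirements \eqref{eqn:keyesneededblah} and \eqref{eqn:clean_up} of \pboost with this value of $p$; once this is done, Theorem~\ref{thm:conf_boost_basic} delivers the stated bound with probability $1-(T+2)\exp(-m/18)$ immediately.

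The mechanism translating the weak oracle of Assumption~\ref{ass:alg_min_orc} into a distance guarantee is the factor-nine reduction of the tolerance. Since $\alg(\delta/9,\lambda,\Delta,x)$ is a weak distance oracle for the $(\mu+\lambda)$-strongly convex proximal objective $\varphi_x$, it returns a point within $\tfrac{1}{3}\sqrt{2\delta/(\mu+\lambda)}$ of $\bar y_x$ with probability $2/3$; feeding this into the robust estimator $\ralg$ and invoking Lemma~\ref{lem:rob_dist_est} inflates the radius by three and boosts the confidence, yielding a point within $\sqrt{2\delta/(\mu+\lambda)}$ of the minimizer with probability $1-\exp(-m/18)$. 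For the base case $j=0$ the proximal objective is simply $f$ (because $\lambda_{-1}=0$), and the hypothesis $\Delta_{\rm in}\geq f(x_{\rm in})-\min f$ certifies the input bound, so $x_0$ lands within $\varepsilon_{-1}=\sqrt{2\delta/\mu}$ of $\bar x_0$, exactly as \pboost requires.

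The crux, flagged in the preceding text, is to check inductively that the dynamically updated quantity $\Delta_j$ is a legitimate upper bound on the initialization error $f^{j}(x_{j})-\min f^{j}$ of the next proximal subproblem, \emph{conditioned on the success event} $E_j$. Here I would invoke the smooth refinement \eqref{eqn:init_better} of Theorem~\ref{thm:main_proppointest}: on $E_j$ one has $\|\bar x_i-x_i\|\leq\varepsilon_{i-1}=\sqrt{2\delta/(\mu+\lambda_{i-1})}$ for every $i\in[0,j]$, and substituting these radii into \eqref{eqn:init_better} produces exactly
$$f(x_j)-f^*\leq\delta\left(\frac{L+\lambda_{j-1}}{\mu+\lambda_{j-1}}+\sum_{i=0}^{j-1}\frac{\lambda_i}{\mu+\lambda_{i-1}}\right)=\Delta_j.$$
Since $f^{j}\geq f$ forces $\min f^{j}\geq f^*$, and since the quadratic penalty vanishes at the center $x_j$, we get $f^{j}(x_j)-\min f^{j}=f(x_j)-\min f^{j}\leq f(x_j)-f^*\leq\Delta_j$, which is precisely the validity of the input $\Delta_j$ demanded by Assumption~\ref{ass:alg_min_orc} at the next call of \ralg. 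This is the one place where $L$-smoothness and the bookkeeping of the radii $\varepsilon_i$ are genuinely needed, and I expect it to be the main obstacle; the rest is mechanical.

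Finally, for the cleanup call the tolerance carries the extra factor $\tfrac{\mu+\lambda_T}{L+\lambda_T}$, so the same argument produces a point within $\sqrt{2\delta/(L+\lambda_T)}$ of $\bar x_{T+1}$ on $E_T$; because $f^T$ is $(L+\lambda_T)$-smooth, the upper estimate analogous to \eqref{eqn:smooth_conv_est} converts this distance bound into $f^T(x_{T+1})-\min f^T\leq\delta$, establishing \eqref{eqn:clean_up}. With \eqref{eqn:keyesneededblah} and \eqref{eqn:clean_up} both verified at confidence $1-\exp(-m/18)$, an application of Theorem~\ref{thm:conf_boost_basic} completes the proof.
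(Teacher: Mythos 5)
Your proposal is correct and follows essentially the same route as the paper: verify that \balg{} instantiates \pboostn{} with $p=\exp(-m/18)$, use the factor-nine tolerance reduction together with Lemma~\ref{lem:rob_dist_est} to get the distance guarantees, certify the validity of $\Delta_j$ on the event $E_j$ via \eqref{eqn:init_better} and the inequality $f^{j}(x_j)-\min f^{j}\leq f(x_j)-f^*$, handle the cleanup step through $(L+\lambda_T)$-smoothness, and conclude by Theorem~\ref{thm:conf_boost_basic}. This matches the paper's argument step for step, including the identification of the $\Delta_j$ bookkeeping as the only nontrivial point.
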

\begin{proof}
	We will verify that Algorithm~\ref{alg:stoc_prox_high_prob_stoc_appr} is an instantiation of Algorithm~\ref{alg:stoc_prox_basic} with $p=\exp(-\frac{m}{18})$. More precisely, we will prove by induction that with this choice of $p$, the iterates $x_j$ satisfy \eqref{eqn:keyesneededblah} for each index $j=0,\ldots, T$ and $x_{T+1}$ satisfies \eqref{eqn:clean_up}. For the base case $j=0$, Lemma~\ref{lem:rob_dist_est} guarantees that with probability $1-p$, the point $x_0$ produced by the robust distance estimator $\ralg$ satisfies 
$$\|x_0-\bar x_{0}\|\leq 3\sqrt{\frac{2\cdot\delta/9}{\mu}}=\varepsilon_{-1}.$$
As an inductive hypothesis, suppose that \eqref{eqn:keyesneededblah} holds for the iterates $x_0,\ldots, x_{j-1}$ for some $j\geq 1$. We will prove it holds for $x_j$. To this end, suppose that the event $E_{j-1}$ occurs.
Then using \eqref{eqn:init_better} we deduce
\begin{align*}
f(x_{j-1})- f^*&\leq \frac{L+\lambda_{j-2}}{2}\|\bar x_{j-1}-x_{j-1}\|^2+\sum_{i=0}^{j-2}\frac{\lambda_{i}}{2}\|\bar x_{i}-x_{i}\|^2\notag\\
&\leq \frac{\delta(L+\lambda_{j-2})}{\mu+\lambda_{j-2}}+\sum_{i=0}^{j-2}\frac{\delta \lambda_i}{\mu+\lambda_{i-1}}=\Delta_{j-1},
\end{align*}
where the second inequality follows from $x_i\in B_{\varepsilon_{i-1}}(\bar x_i)$ with $\varepsilon_{i-1}=\sqrt{\frac{2\delta}{\mu+\lambda_{i-1}}}$ for all $i\in[0,j-1]$.
By examining the definition of $f^{j-1}$, we deduce $f^{j-1}(x_{j-1})=f(x_{j-1})$ and $\min f^{j-1} \geq \min f=f^*$, which imply 
\begin{equation}\label{eqn:verify_func_err}
f^{j-1}(x_{j-1})-\min f^{j-1}\leq f(x_{j-1})- f^*\leq \Delta_{j-1}.
\end{equation}
That is, $\Delta_{j-1}$ is an upper bound on the initial gap $f^{j-1}(x_{j-1})-\min f^{j-1}$ for all~$j$ whenever the event $E_{j-1}$ occurs. 
Moreover Lemma~\ref{lem:rob_dist_est} guarantees that conditioned on $E_{j-1}$ with probability $1-p$, the following estimate holds:
$$\|x_{j}-\bar x_{j}\|\leq 3\sqrt{\frac{2\cdot\delta/9}{\mu+\lambda_{j-1}}}=\varepsilon_{j-1}.$$
Thus the condition \eqref{eqn:keyesneededblah} holds for the iterate $x_j$,	as desired.

Now suppose that the event $E_T$ holds. Then exactly the same reasoning that led to \eqref{eqn:verify_func_err} yields the guarantee
	$f^{T}(x_{T})-\min f^{T}\leq \Delta_T$. Therefore  Lemma~\ref{lem:rob_dist_est} guarantees that with probability $1-p$ conditioned on $E_T$, we have
	$$\|x_{T+1}-\bar x_{T+1}\|\leq 3\sqrt{\frac{2}{\mu+\lambda_{T}}\cdot \frac{\delta}{9}\cdot\frac{\mu+\lambda_T}{L+\lambda_T}}=\sqrt{\frac{2\delta}{L+\lambda_{T}}}.$$
Taking into account the fact that $f^T$ is $(L+\lambda_T)$-smooth,
we therefore deduce $$\PP[f^T(x_{T+1})-\min f^T\leq \delta\mid E_T]\geq 1-p,$$ thereby establishing \eqref{eqn:clean_up}. An application of Theorem~\ref{thm:conf_boost_basic} completes the proof.
\end{proof}

When using the proximal parameters $\lambda_i=\mu 2^i$, we obtain the following guarantee.

\begin{corollary}[Efficiency of $\balg$ with geometric decay]\label{cor:last_stream}
	Fix an arbitrary point $x_{\rm in}\in \R^d$ and let $\Delta_{\rm in}$ be any upper bound $\Delta_{\rm in}\geq f(x_{\rm in})-\min f$. Fix a target accuracy $\epsilon>0$ and probability of failure $p\in (0,1)$, and set the algorithm parameters 
	 $$T=\left\lceil\log_2(\kappa)\right\rceil,\qquad m=\left\lceil18\ln\left(\frac{2+T}{p}\right)\right\rceil,\qquad \delta=\frac{\epsilon}{2+2T}, \qquad \lambda_i=\mu 2^i.$$
	Then the point  $x_{T+1}=\balg(\delta,\Delta_{\rm in}, x_{\rm in},T,m)$ satisfies 
	$$\PP(f(x_{T+1})-\min f\leq \epsilon)\geq 1-p.$$ 
	Moreover, the total number of calls to $\alg(\cdot)$ is
$$\left\lceil18\ln\left(\frac{\left\lceil 2+\log_2(\kappa)\right\rceil}{p}\right)\right\rceil\lceil 2+\log_2(\kappa)\rceil ,$$
while the initialization errors satisfy
$$\max_{i=0,\ldots,T+1}\Delta_{i}\leq \frac{\kappa+1+2\left\lceil \log_2(\kappa)\right\rceil}{2+2\left\lceil\log_2(\kappa)\right\rceil} \epsilon.$$
\end{corollary}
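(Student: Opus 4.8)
The plan is to instantiate Theorem~\ref{thm:effbalg} with the stated parameter choices and then verify each of the three quantitative claims in turn. The geometric choice $\lambda_i=\mu 2^i$ is precisely the one that makes the master sum $\sum_{i=0}^T \frac{\lambda_i}{\mu+\lambda_{i-1}}$ collapse: since $\lambda_{i-1}=\mu 2^{i-1}$ for $i\geq 1$ (and $\lambda_{-1}=0$), each ratio $\frac{\mu 2^i}{\mu+\mu 2^{i-1}}=\frac{2^i}{1+2^{i-1}}$ is bounded above by $2$, and the $i=0$ term equals $\frac{\mu}{\mu}=1\le 2$. Hence $1+\sum_{i=0}^T\frac{\lambda_i}{\mu+\lambda_{i-1}}\le 1+2(T+1)=2T+3$. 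With $\delta=\frac{\epsilon}{2+2T}$, Theorem~\ref{thm:effbalg} then yields $f(x_{T+1})-\min f\le \delta(2T+3)\le \epsilon$ once I check $\frac{2T+3}{2T+2}\le \cdots$; more cleanly, I would bound the sum by $2(T+1)$ so that the prefactor is exactly $2+2T$ and the product with $\delta$ is exactly $\epsilon$. The probability bound $1-(T+2)\exp(-m/18)\ge 1-p$ follows by substituting $m=\lceil 18\ln\frac{T+2}{p}\rceil$, since then $\exp(-m/18)\le \frac{p}{T+2}$.

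The \textbf{sample/call count} is the most mechanical step. Algorithm~\ref{alg:stoc_prox_high_prob_stoc_appr} makes one call to $\ralg$ per index $j=0,\ldots,T$ plus the final cleanup call, i.e.\ $T+2$ invocations of $\ralg$, each of which issues $m$ queries to $\alg(\cdot)$. Thus the total number of $\alg$-calls is $(T+2)m$. Substituting $T=\lceil\log_2\kappa\rceil$ and $m=\lceil 18\ln\frac{2+T}{p}\rceil$ gives exactly the claimed expression $\lceil 18\ln\frac{\lceil 2+\log_2\kappa\rceil}{p}\rceil\lceil 2+\log_2\kappa\rceil$, after writing $T+2=\lceil 2+\log_2\kappa\rceil$.

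For the \textbf{initialization-error bound}, I would analyze $\Delta_j=\delta\bigl(\frac{L+\lambda_{j-1}}{\mu+\lambda_{j-1}}+\sum_{i=0}^{j-1}\frac{\lambda_i}{\mu+\lambda_{i-1}}\bigr)$ and observe that both summands are increasing in $j$: the sum term grows monotonically, and the ratio $\frac{L+\lambda_{j-1}}{\mu+\lambda_{j-1}}$ decreases in $j$ but is dominated at small $j$ by its value $\frac{L}{\mu}=\kappa$ at $j=0$ (where $\lambda_{-1}=0$). So the competition between a decreasing conditioning factor and an increasing accumulated-error factor must be resolved. I expect the maximum to occur at the terminal index $j=T+1$, where $\frac{L+\lambda_T}{\mu+\lambda_T}\le 2$ (by $T\ge\log_2\kappa$) but the full sum $\sum_{i=0}^{T}\frac{\lambda_i}{\mu+\lambda_{i-1}}$ is present; alternatively at $j=0$ where the factor is $\kappa$ but the sum is empty. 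Bounding $\Delta_{T+1}\le\delta(2+2(T+1))$ versus $\Delta_0\le\delta\kappa$, and using $\delta=\frac{\epsilon}{2+2T}$, I would show the claimed bound $\frac{\kappa+1+2\lceil\log_2\kappa\rceil}{2+2\lceil\log_2\kappa\rceil}\epsilon$ arises as the pointwise maximum over $j$ after substituting $\lambda_i=\mu 2^i$; the numerator $\kappa+1$ comes from the $j=0$ conditioning contribution $\frac{L+0}{\mu+0}=\kappa$ plus the lead $+1$.

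The \textbf{main obstacle} is the initialization-error estimate: unlike the function-gap and call-count claims, which follow almost immediately from Theorem~\ref{thm:effbalg} and direct substitution, the bound on $\max_i \Delta_i$ requires carefully tracking the interplay between the shrinking condition number $\frac{L+\lambda_{j-1}}{\mu+\lambda_{j-1}}$ and the growing cumulative sum, identifying which index attains the maximum, and then simplifying the resulting expression to match the stated closed form exactly. The arithmetic of collapsing $\frac{2^i}{1+2^{i-1}}$-type terms and reconciling ceilings is routine but must be done with care to land on the precise constant claimed.
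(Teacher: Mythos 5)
Your overall route is the right one --- the paper states this corollary without a separate proof, as a direct instantiation of Theorem~\ref{thm:effbalg} plus bookkeeping --- and your call count $(T+2)m$ and probability substitution are exactly correct. But two of your arithmetic steps, as written, do not land on the stated constants.

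First, the function-gap bound. Bounding all $T+1$ summands of $\sum_{i=0}^{T}\frac{\lambda_i}{\mu+\lambda_{i-1}}$ by $2$ gives $1+\sum\leq 2T+3$, and then $\delta(2T+3)=\frac{2T+3}{2T+2}\,\epsilon>\epsilon$, so the claim is not proved. You notice this, but your proposed repair --- ``bound the sum by $2(T+1)$'' --- still yields $1+2(T+1)=2T+3$. The correct accounting is that the $i=0$ term equals exactly $1$ (since $\lambda_{-1}=0$) while only the remaining $T$ terms are bounded by $2$, so the sum is at most $1+2T$ and the prefactor is at most $2+2T$, which pairs exactly with $\delta=\frac{\epsilon}{2+2T}$.

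Second, the bound on $\max_i\Delta_i$. Your plan to identify the maximizing index and compare $\Delta_0$ against $\Delta_{T+1}$ does not establish the stated bound: since $\Delta_j/\delta$ is the sum of a decreasing term $r_j=\frac{L+\lambda_{j-1}}{\mu+\lambda_{j-1}}$ and an increasing term $s_j=\sum_{i=0}^{j-1}\frac{\lambda_i}{\mu+\lambda_{i-1}}$, its maximum need not sit at an endpoint, and $\max(\Delta_0,\Delta_{T+1})$ can be strictly smaller than the claimed constant, which would leave the intermediate indices uncontrolled by your comparison. The intended argument is simpler and uniform: for \emph{every} $j$ bound $r_j\leq r_0=\kappa$ and $s_j\leq s_{T+1}\leq 1+2T$ simultaneously, so that $\Delta_j\leq\delta\bigl(\kappa+1+2T\bigr)=\frac{\kappa+1+2\lceil\log_2\kappa\rceil}{2+2\lceil\log_2\kappa\rceil}\,\epsilon$. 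This is why the numerator is a \emph{sum} of the two separate maxima rather than the value of $\Delta_j$ at any single index. With these two repairs your argument goes through and matches the paper's intent.
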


We now concretely describe how to use (accelerated) stochastic gradient methods as  $\alg(\cdot)$ within  $\pboost$.

\subsection*{Illustration: robust (accelerated) stochastic gradient methods}
 Following the standard literature on streaming algorithms, we suppose that the only access to $f$ is through a stochastic gradient oracle.
Namely, fix a probability space $(\Omega,\mathcal{F},\cP)$ and let $G\colon\R^d\times\Omega\to\R$ be a measurable map satisfying 
$$\EE_z G(x,z)=\nabla f(x)\qquad \textrm{and}\qquad \EE_z\|G(x,z)-\nabla f(x)\|^2\leq \sigma^2.$$
We suppose that for any point $x$, we may sample $z\in \Omega$ and compute the vector $G(x,z)$, which serves as an  unbiased estimator of the gradient $\nabla f(x)$. The performance of standard numerical methods within this model of computation is judged by their sample complexity---the number of stochastic gradient evaluations $G(x,z)$ with $z\sim \cP$ required by the algorithm to produce an approximate minimizer of the problem. 

Fix an initial point $x_{in}$ and let $\Delta_{\rm in}>0$ satisfy $\Delta_{\rm in}\geq f(x_0)-f^*$.
It is well known that an appropriately modified stochastic gradient method can generate a point $x$ satisfying 
$\EE f(x)-f^*\leq \epsilon$ with sample complexity
\begin{equation}\label{eqn:grad_desn_est}
\mathcal{O}\left(\kappa\log\left(\frac{\Delta_{\rm in}}{\epsilon}\right)+\frac{\sigma^2}{\mu\epsilon}\right).
\end{equation}
The accelerated stochastic gradient method of \cite[Multi-stage AC-SA, Proposition 6]{ghadimi2013optimal} and the simplified optimal algorithm of \cite[Restarted Algorithm C, Corollary 9]{kulunchakov2019estimate} have the substantially better sample complexity
\begin{equation}\label{eqn:accel_rate}
\mathcal{O}\left(\sqrt{\kappa}\log\left(\frac{\Delta_{\rm in}}{\epsilon}\right)+\frac{\sigma^2}{\mu\epsilon}\right).
\end{equation}
Clearly, we may use either of these two procedures as $\alg(\cdot)$ within the \texttt{proxBoost} framework. 
Indeed, using Corollary~\ref{cor:last_stream}, we deduce that the two resulting algorithms will find a point $x$ satisfying 
$$\PP[f(x)-f^*\leq \epsilon]\geq 1-p$$
with sample complexities
\begin{equation}\label{eqn:rob_rate1}
\mathcal{O}\left(\ln\left(\kappa\right)\ln\left(\frac{\ln\kappa}{p}\right)\cdot \left(\kappa\ln\left(\frac{\Delta_{\rm in}\ln(\kappa)}{\epsilon}\vee \kappa\right)+ \frac{\sigma^2\ln(\kappa)}{\mu\epsilon}\right)\right),
\end{equation}
and 
\begin{equation}\label{eqn:rob_rate2}
\mathcal{O}\left(\ln\left(\kappa\right)\ln\left(\frac{\ln\kappa}{p}\right)\cdot \left(\sqrt{\kappa}\ln\left(\frac{\Delta_{\rm in}\ln(\kappa)}{\epsilon}\vee \kappa\right)+ \frac{\sigma^2\ln(\kappa)}{\mu\epsilon}\right)\right),
\end{equation}
for the unaccelerated and accelerated methods, respectively. Thus, \pboost  endows the stochastic gradient method and its accelerated variant with high confidence guarantees at an overhead cost that is only polylogarithmic in $\kappa$ and logarithmic in $1/p$.

\section{Extension to convex composite problems}\label{sec:conv_comp_ext}
One limitation of the techniques presented in Sections~\ref{sec:cons_ERM} and \ref{sec:conseq_approx} is that the function $f$ to be minimized was assumed to be smooth. In particular, these techniques can not accommodate constraints or nonsmooth regularizers. To illustrate the difficulty, consider the task of minimizing a smooth and strongly convex function $f$ over a closed convex set $\cX$. 
The current approach heavily relies on the two-sided bound \eqref{eqn:smooth_conv_est}, which guarantees that the function gap $f(x)-f^*$ and the squared distance to the solution $\|x-\bar x\|^2$ are proportional up to multiplication by the condition number of $f$. When a constraint set $\cX$ is present, the left inequality of  \eqref{eqn:smooth_conv_est} still holds, but the right inequality is typically false. In particular, in the clean up stage of $\cb$, it is unclear how to turn low probability guarantees on the function gap to high probability guarantees using robust distance estimation.
In this section, we show how to overcome this difficulty and extend the aforementioned techniques to convex composite optimization problems. 

\subsection{Geometric intuition in the constrained case}
\label{sec:geometric-intuition}
Before delving into the details, it is instructive to first focus on the constrained setting, where no additional regularizers are present. This is the content of this section. In section \ref{sec:gen_setting}, we formally describe the algorithm for regularized convex optimization problems in full generality and prove correctness. Consequently, the reader may safely skip to Section \ref{sec:gen_setting}, without losing continuity.

 Setting the stage, 
consider the optimization problem
$$\min_x~ g(x)\quad\textrm{subject to}\quad x\in \cX,$$
where $g\colon\R^d\to\R$ is $\mu$-strongly convex and $L$-smooth and $\cX$ is a closed convex set. In line with the previous sections, let $\bar x$ be the minimizer of the problem and let $\kappa=\frac{L}{\mu}$ denote the condition number.
Suppose that we have available an algorithm $\cM(\epsilon)$ that generates a point $x_{\epsilon}$ satisfying 
\begin{equation}\label{eqn:g-weak-prob}
\PP\left(g(x_{\epsilon})-\min_{x\in\cX} g\leq \epsilon\right)\geq \frac{2}{3}.
\end{equation}
 Our immediate goal is to explain how to efficiently boost this low-probability guarantee to a high confidence outcome, albeit with the degraded accuracy $\kappa\epsilon$.
%
 
We begin as in the unconstrained setting with the two-sided bound:
\begin{equation}\label{eqn:two_side_constr}
\langle \nabla g(\bar x),x-\bar x\rangle+\frac{\mu}{2}\|x-\bar x\|^2\leq g(x)-g(\bar x)\leq \langle \nabla g(\bar x),x-\bar x\rangle+\frac{L}{2}\|x-\bar x\|^2 
\end{equation}
for all $x\in \cX$. In particular, if the minimizer $\bar x$ lies in the interior of $\cX$ the gradient $\nabla g(\bar x)$ vanishes and the estimate \eqref{eqn:two_side_constr} reduces to \eqref{eqn:smooth_conv_est}. In the more general constrained setting, however, the additive term $\langle \nabla g(\bar x),x-\bar x\rangle$ plays an important role. Note that optimality conditions at $\bar x$ immediately imply that this term is nonnegative 
$$\langle \nabla g(\bar x),x-\bar x\rangle\geq 0\qquad \textrm{for all }x\in \cX.$$
Moreover,  we see from the estimate \eqref{eqn:two_side_constr} that the point $x_{\epsilon}$ returned by $\cM(\epsilon)$, with probability $2/3$, lies in the region
\begin{equation}\label{eqn:sim_inter_ball_hyp}
\Lambda:=\left\{x\in\cX: \|x-\bar x\|\leq \sqrt{\frac{2\epsilon}{\mu}}\quad \textrm{and}\quad 0\leq \langle \nabla g(\bar x), x-\bar x\rangle \leq \epsilon\right\} .
\end{equation}
Thus  $x_{\epsilon}$ simultaneously lies in the ball around $\bar x$ of radius $\sqrt{2\epsilon/\mu}$ and is sandwiched between two parallel hyperplanes with normal $\nabla g(\bar x)$. See Figure~\ref{Figure:comparison} for an illustation.

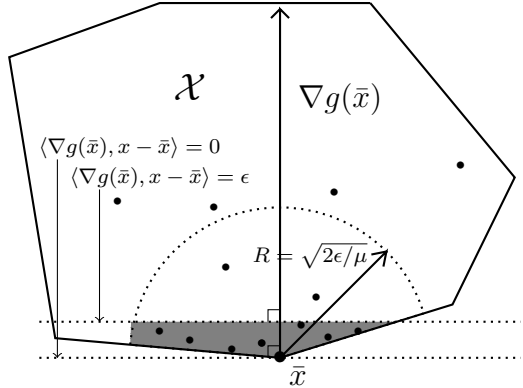
\begin{figure}[t]
	\centering
    \centering
    \begin{tikzpicture}[thick,scale=0.8]   
    \draw[dotted][fill=gray][line width=0pt] (-2.49048674523,0.21788935686-2)--(0,-2)--(1.62425554954*1.19417,1.62425554954*0.3694-2)--(-2.475,-1.4)--(-2.49048674523,0.21788935686-2);
    \draw[dotted](0,0-2) -- (2*1.19417,2*0.3694-2) arc (17.188733:175:2.5) -- cycle;
    \draw [thick](0,0-2) -- (3.5355/2,3.5355/2-2)--(3.5355/2-0.1*0.5,3.5355/2-0.5*0.5-2);
    \draw [thick] (3.5355/2,3.5355/2-2)--(3.5355/2-0.5*0.5,3.5355/2-0.1*0.5-2);
    \node at (0,0-2) {\small$\bullet$};
    \node at (0.3,-0.3-2) {$\bar x$};
    \node at (0.5,1.7-2) {\scriptsize$R=\sqrt{2\epsilon/\mu}$}; 
    \draw (0,-2) -- (2*1.19417*1.2,2*0.3694*1.2-2) -- (3.9,1)--(1.5,3.9);
    \draw (0,-2) -- (-2.49048674523*1.5,0.21788935686*1.5-2)--(-4.5,3)--(-2,3.9)--(1.5,3.9);
    \node at (-2+0.5,2.5) {\large$\cX$};
    \draw[thick](0,-2)--(0,3.8)--(0+0.15,3.8-0.15); 
    \draw (0,3.8)--(0-0.15,3.8-0.15); 
    \node at (1,2.3) {$\nabla g(\bar x)$};
    \draw[dotted] (-4,-2+0.6)--(4,-2+0.6);
    \draw[dotted] (-4,-2)--(4,-2); 
    \draw[thin] (0,-1.4+0.2)--(0-0.2,-1.4+0.2)--(-0.2,-1.4);
    \draw[thin] (0,-1.4+0.2-0.6)--(0-0.2,-1.4+0.2-0.6)--(-0.2,-1.4-0.6);
    \node at (-3+0.5,1.5) {\scriptsize$\langle\nabla g(\bar x),x-\bar x\rangle = 0$};
    \node at (-2.5+0.5,1) {\scriptsize$\langle\nabla g(\bar x),x-\bar x\rangle = \epsilon$};
    \draw [->][thin] (-3,0.8)--(-3,-1.4);
    \draw [->][thin] (-3.7,1.3)--(-3.7,-2);
    \node at (-0.3,-1.75) {\tiny$\bullet$};
    \node at (-2,-1.55) {\tiny$\bullet$};
    \node at (-0.3,-1.75) {\tiny$\bullet$};
    \node at (0.35,-1.45) {\tiny$\bullet$};
    \node at (0.8,-1.65) {\tiny$\bullet$};
    \node at (1.3,-1.55) {\tiny$\bullet$};
    \node at (-0.8,-1.85) {\tiny$\bullet$};
    \node at (-1.5,-1.7) {\tiny$\bullet$};
    \node at (0.6,-1) {\tiny$\bullet$};
    \node at (-0.9,-0.5) {\tiny$\bullet$};
    \node at (-1.1,0.5) {\tiny$\bullet$};
    \node at (-2.7,0.6) {\tiny$\bullet$};
    \node at (3,1.2) {\tiny$\bullet$};
    \node at (0.9,0.75) {\tiny$\bullet$}; 
    \end{tikzpicture}  
\caption{Geometry of the region $\Lambda$.}
\label{Figure:comparison}
\end{figure}

Naturally, our goal is to generate a point $x$ that lies in $\Lambda$, or a slight perturbation thereof, with probability $1-p$.
As the first attempt, suppose for the moment that we know the value of the gradient $\nabla g(\bar x)$. Then we can define the metric 
$$\rho(x,x')=\max\left\{\sqrt{\frac{\epsilon\mu}{2}}\cdot\|x-x'\|,~ \left|\langle \nabla g(\bar x),x-x'\rangle\right|\right\}$$
and form the robust distance estimator (Algorithm~\ref{alg:stoc_prox_high_prob}) with $\rho(\cdot,\cdot)$ replacing the Euclidean norm $\|\cdot\|$. 
In particular, the confidence bound~\eqref{eqn:g-weak-prob} and the left
inequality in~\eqref{eqn:two_side_constr} imply that $\cM(\epsilon)$ is a weak distance oracle, that is,
$\PP\bigl(\rho(x,\bar x)\leq\epsilon\bigr)\geq\frac{2}{3}.$ 
A direct extension of Lemma~\ref{lem:rob_dist_est} shows that with $m$ calls to the oracle $\cM(\epsilon)$, the robust distance estimator returns a point $x\in\cX$ satisfying
$$\PP\bigl(\rho(x,\bar x)\leq 3\epsilon\bigr)\geq 1-\exp(-m/18).$$
Consequently, appealing to the right-hand-side of \eqref{eqn:two_side_constr}, we obtain the desired guarantee
$$\PP\bigl(g(x)-g(\bar x)\leq 3(1+\kappa)\epsilon\bigr)\geq 1-p.$$

The assumption that we know the gradient $\nabla g(\bar x)$ is of course unrealistic. Therefore, the strategy we propose will instead replace the gradient $\nabla g(\bar x)$ with some estimate of the gradient $\nabla g(\hat x)$ at a nearby point $\hat x$, which we denote by $\widetilde{\nabla}g(\hat x)$. See Figure~\ref{Figure:comparison: know F' or not} for an illustration.
Indeed, a natural candidate for $\hat x$ is the robust distance estimator of $\bar x$ in the Euclidean norm. We will see that in order for the proposed procedure to work, it suffices for the gradient estimator $\widetilde{\nabla}g(\hat x)$ to approximate $\nabla g(\hat x)$ only up to the very loose accuracy $\kappa\sqrt{\mu\epsilon}$. In particular, if we have access to a stochastic gradient estimator of $\nabla g(\hat x)$ with variance $\sigma^2$, then $\widetilde{\nabla}g$ can be formed using only $\frac{1}{\kappa^2}\cdot\frac{\sigma^2}{\mu\epsilon}$ samples. This overhead in sample complexity is negligible compared to the cost of executing typical algorithms $\cM(\epsilon)$, e.g., as given in~\eqref{eqn:grad_desn_est} and~\eqref{eqn:accel_rate}.

\begin{figure}[t]
	\centering
	\begin{minipage}{0.45\textwidth} 
		\begin{tikzpicture} [scale=0.7]
		\draw [dotted][fill=gray][line width=0pt] (-4,0)--(4,0)--(3.95,0.6)--(-3.95,0.6)--(-4,0);
		\draw[dotted][thick](0,0) -- (4,0) arc (0:180:4) -- cycle;
		\draw [thick][->] (0,0) -- (4.949747/1.75,4.949747/1.75);
		\node at (0,0) {\small$\bullet$};
		\node at (0.3,-0.3) {$\bar x$};
		\node at (1.2,2.6) {\scriptsize$R=\sqrt{2\epsilon/\mu}$};
		\draw [thick][->](0,0)--(0,7.5);
		\node at (1.0,6) {\small $\nabla g(\bar x)$};
		\draw [dotted][thick] (-3.9,0.6)--(3.95,0.6);  
		\node at (2,0.3) {\small$\Lambda$}; 
		\node at (1.5,0.4) {\tiny$\bullet$};
		\node at (-0.6,1.5) {\tiny$\bullet$};
		\node at (-0.9,0.2) {\tiny$\bullet$};
		\node at (3.9,0.123) {\tiny$\bullet$};
		\node at (-2.9,0.5) {\tiny$\bullet$};
		\node at (2.6,0.15) {\tiny$\bullet$};
		\node at (1.45,2.2) {\tiny$\bullet$};
		\node at (-0.9,3.5) {\tiny$\bullet$};
		\node at (-2.9,2.5) {\tiny$\bullet$};
		\node at (-1.9,0.25) {\tiny$\bullet$};
		
		\node at (-3.8,0.55) {\tiny$\bullet$}; 
		\draw [thick][dotted] (-4.5,0)--(-4.05,0);		
        \draw [thick][dotted] (4.5,0)--(4.05,0);	
        \draw [thick][dotted] (-4.5,1.1)--(4.5,1.1);
        \draw [thin][->] (-3.8,0.55)--(-3.8,1.1);
        \draw [thin][->] (-3.8,0.55)--(-3.8,0);
        \node at (-4.5,0.25) {\scriptsize$\cO(\epsilon)$};
        \draw [thin][->] (-3.8,0.55)--(-3.8-0.5,0.55) --(-3.8-0.5,0.55 +3.8);
        \node at (-2.8, 5-0.3) {\scriptsize \begin{tabular}[c]{@{}c@{}}$\{x':\rho(x,x')\leq \epsilon\}$ contains\\more than half candidates\end{tabular}}; 
        \draw [thin] (-0,1.1+0.25)--(-0-0.25,1.1+0.25)--(-0-0.25,1.1);
        \draw [thin] (-0,0.25)--(-0-0.25,0.25)--(-0-0.25,0);
		\end{tikzpicture}  
	\end{minipage} 
	\begin{minipage}{0.45\textwidth} 
		\begin{tikzpicture} [scale=0.7]
		\draw [dotted][fill=gray][line width=0pt] (-4,0)--(4,0)--(3.95,0.6)--(-3.95,0.6)--(-4,0);
		\draw[dotted][thick](0,0) -- (4,0) arc (0:180:4) -- cycle;
		\draw [thick][->] (0,0) -- (4.949747/1.75,4.949747/1.75);
		\node at (0,0) {\small$\bullet$};
		\node at (0.3,-0.3) {$\bar x$};
		\node at (1.2,2.6) {\scriptsize$R=\sqrt{2\epsilon/\mu}$};
		\draw [thick][->](0,0)--(0,7);
		\node at (2.2,6) {\small unknown $\nabla g(\bar x)$};
		\draw [thick][->](0,0)--(-1/25*6.9,6.9);
        \node at (-2,6) {\small known $\widetilde{\nabla} g(\hat x)$};
		\draw [thin][dotted](-1/25*6.9,6.9)--(0,7);
		\node at (-0.5,7.3) {\small$\cO(\sqrt{\epsilon})$ error};
		\draw [dotted][thick] (-3.9,0.6)--(3.95,0.6);  
		\node at (2,0.3) {\small$\Lambda$}; 
		\node at (1.5,0.4) {\tiny$\bullet$};
		\node at (-0.6,1.5) {\tiny$\bullet$};
		\node at (-0.9,0.2) {\tiny$\bullet$};
		\node at (3.9,0.123) {\tiny$\bullet$};
		\node at (-2.9,0.5) {\tiny$\bullet$};
		\node at (2.6,0.15) {\tiny$\bullet$};
		\node at (1.45,2.2) {\tiny$\bullet$};
		\node at (-0.9,3.5) {\tiny$\bullet$};
		\node at (-2.9,2.5) {\tiny$\bullet$};
		\node at (-1.9,0.25) {\tiny$\bullet$};
		
		\node at (-3.8,0.55) {\tiny$\bullet$}; 
		\draw [thick][dotted] (-3.8 + 0.04*0.89,0.55 - 1*0.89)--(-3.8 + 0.04*0.89 + 1*8,0.55 - 1*0.89 + 0.04*8);	
		\draw [thick][dotted] (-3.8 + 0.04*0.89,0.55 - 1*0.89)--(-3.8 + 0.04*0.89 - 1*0.4,0.55 - 1*0.89 - 0.04*0.4);	
		\draw [thick][dotted] (-3.8 - 0.04*0.89,0.55 + 1*0.89)--(-3.8 - 0.04*0.89 + 1*8,0.55 + 1*0.89 + 0.04*8);	 
		\draw [thick][dotted] (-3.8 - 0.04*0.89,0.55 + 1*0.89)--(-3.8 - 0.04*0.89 - 1*0.4,0.55 + 1*0.89 - 0.04*0.4);	
		\draw [thin][->] (-3.8,0.55)--(-3.8 + 0.04*0.89,0.55 - 1*0.89);
		\draw [thin][->] (-3.8,0.55)--(-3.8 - 0.04*0.89,0.55 + 1*0.89);
		 \draw [thin][->] (-3.8,0.55)--(-3.8-0.5,0.55) --(-3.8-0.5,0.55 +3.8);
		\node at (-2.8, 5-0.3) {\scriptsize \begin{tabular}[c]{@{}c@{}}$\{x':\rho(x,x')\leq \epsilon\}$ contains\\more than half candidates\end{tabular}};  
		\node at (-4.5,0.25) {\scriptsize$\cO(\epsilon)$};
		\draw [thick] (0,0)--(0.04*0.15,-1*0.15);
		\draw [thin](-0.04*0.15,+1*0.15)-- (-0.04*0.15-1*0.3,+1*0.15-0.04*0.3)--
		(-0.04*0.15-1*0.3 + 0.04*0.3,+1*0.15-0.04*0.3-1*0.3);
		\draw (-0.04*1.9,+1*1.9)-- (-0.04*1.9-1*0.3,+1*1.9-0.04*0.3)--
		(-0.04*1.9-1*0.3 + 0.04*0.3,+1*1.9-0.04*0.3-1*0.3);
		\end{tikzpicture}  
	\end{minipage} 
    \caption{The left side illustrates the region $\Lambda$; the right side	depicts the perturbation of $\Lambda$ obtained by replacing the exact gradient $\nabla g(\bar x)$ with an estimator $\widetilde \nabla g(\hat x)\approx\nabla g(\hat x)$.}
	\label{Figure:comparison: know F' or not}
\end{figure}
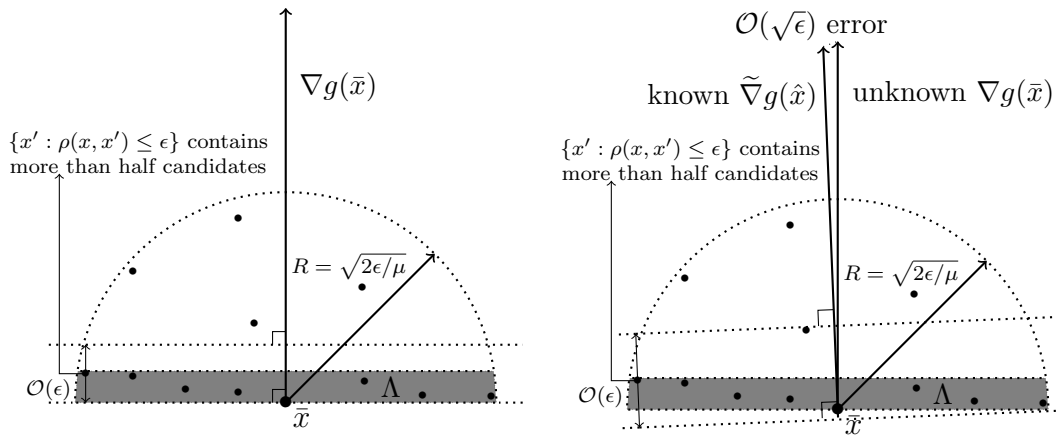

\subsection{Convex composite setting}\label{sec:gen_setting}
In this section, we formally develop the procedure that turns low probability guarantees on the function gap for composite problems to high probability outcomes. 

\begin{assumption}[Convex composite problem]\label{ass:strong_conv2}
	{\rm We consider the optimization problem 
		\begin{equation}\label{eqn:targ_prob2}
		\min_{x\in \R^d}~ f(x):=g(x)+h(x)
		\end{equation}
		where the function $g\colon\R^d\to\R$ is $L$-smooth and $\mu$-strongly convex and $h\colon\R^d\to\R\cup\{+\infty\}$ is closed and convex. We denote the minimizer of $f$ by $\bar x$, its minimal value by $f^*:=\min f$, and its condition number by $\kappa:=L/\mu$. 
}
\end{assumption}

In particular, we may model minimization of a smooth and strongly convex function $g$ over a closed convex set $\cX$ by declaring $h$ to take value zero on $\cX$ and $+\infty$ off it. 
Before stating the proposed algorithm, we require three elementary ingredients: a two-sided bound akin to \eqref{eqn:smooth_conv_est}, robust distance estimation with a ``pseudometric,'' and a robust gradient estimator.

%


\subsubsection{The two-sided bound}
Observe that optimality conditions at $\bar x$ imply the inclusion $-\nabla g(\bar x)\in \partial h(\bar x)$, where $\partial h(\bar x)$ denotes the \emph{subdifferential} of~$h$ at~$\bar x$, and therefore the nonnegativity of the quantity 
$$D_h(x,\bar x):=h(x)-h(\bar x)+\langle \nabla g(\bar x), x-\bar x\rangle.$$
Indeed, optimization specialists may recognize $D_h(x,\bar x)$ as a Bregman divergence induced by  $h$---hence the notation. 
The term $D_h(x,\bar x)$ appears naturally in a two sided bound similar to \eqref{eqn:smooth_conv_est}. 
Specifically, adding $h(x)-h(\bar x)$ to the two-sided bound~\eqref{eqn:two_side_constr} throughout, we obtain the key two-sided estimate
\begin{equation}\label{eqn:two_side_bound_reg}
\boxed{D_h(x,\bar x) +\frac{\mu}{2}\|x-\bar x\|^2 \leq f(x)-f^*\leq D_h(x,\bar x) + \frac{L}{2}\|x-\bar x\|^2.} 
\end{equation}

\subsubsection{Robust distance estimation with a pseudometric}
As the second ingredient, we will require a slight modification of the robust distance estimation technique of \cite[p. 243]{complexity} and \cite{hsu2016loss}. In particular, it will be convenient to replace the Euclidean norm $\|\cdot\|$ with a more general distance measure.

\begin{defn}[Pseudometric]{\rm
	A mapping $\rho\colon\cX\times\cX\mapsto \RR$ is a \emph{pseudometric} on a set $\cX$ if for all $x,y,z\in \cX$ it satisfies:
	\begin{enumerate}
		\item (nonnegative) $\rho(x,y)\geq 0$ and $\rho(x,x)=0$,
		\item (symmetry) $\rho(x,y) = \rho(y,x)$,
		\item (triangle inequality) $\rho(x,y)\leq \rho(x,z) + \rho(z,y)$.
		\end{enumerate}
The symbol $B_{r}^{\rho}(x)=\{y\in\cX: \rho(x,y)\leq r\}$ will denote the $r$-radius ball around $x$ in the pseudometric $\rho$.}
\end{defn}

 With this notation, we record Algorithm~\ref{alg:gen_rob_pseudo}, which is in the same spirit as the robust distance estimator, Algorithm~\ref{alg:stoc_prox_high_prob}. The differences are that the Euclidean norm is replaced with a pseudometric $\rho$, an index set is returned instead of a single point, and we leave the origin of the vectors $y_i$ unspecified for the moment.

\begin{algorithm}[h!]
	\caption{$\ext(\{y_i\}_{i=1}^m, \rho)$}\label{alg:gen_rob_pseudo}
	\label{alg:Subrountine-Robust-Distance}
	\textbf{Input:} A set of $m$ points $Y=\{y_1,...,y_m\}\subset\cX$, a pseudometric $\rho$ on $\cX$.\\  
	{\bf Step } $i=1,\ldots,m$:\\
\hspace{20pt} Compute $r_i=\min\{r\geq 0: |B_{r}^{\rho}(y_i)\cap Y|>\frac{m}{2}\}$.

Compute the median $\hat r=\texttt{median}(r_1,\ldots, r_m)$.

{\bf Return} $\mathcal{I}=\{i\in [1,m]: r_i\leq \hat r\}$.
%
\end{algorithm}

We will need the following elementary lemma, akin to Lemma~\ref{lem:rob_dist_est}. The main difference is that the lemma provides at least $m/2$ points, instead of a single point, that are close to the target with high probability. The proof is identical to that of \cite[p. 243]{complexity}  and \cite[Propositions 8 and 9]{hsu2016loss}; we provide details for the sake of completeness.

\begin{lemma}[Robust distance estimation]\label{lem:med_mean_ext}
Let $\rho$ be a pseudometric on a set $\cX$. Consider a set of points $Y=\{y_1,\ldots, y_m\}\subset\cX$ and a point $\bar y\in\cX$	satisfying $|B_{\varepsilon}^\rho(\bar y)\cap Y|>\frac{m}{2}$ for some $\varepsilon>0$. Then the index set  
$\mathcal{I}=\ext(\{y_i\}_{i=1}^m, \rho)$ satisfies the guarantee 
 $$\rho(y_{i},\bar y)\leq 3\varepsilon\quad\textrm{for all }i\in \cI.$$
\end{lemma}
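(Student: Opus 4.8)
The plan is to run the standard median-of-balls argument, now phrased in the pseudometric $\rho$, and to exploit the two crucial facts that (i) more than half of the sample points lie within $\rho$-distance $\varepsilon$ of $\bar y$, and (ii) any two subsets of $Y$ of size strictly greater than $m/2$ must intersect. Concretely, I would introduce the \emph{good index set} $G := \{ j \in [1,m] : \rho(y_j, \bar y) \leq \varepsilon \}$, so that the hypothesis $|B_{\varepsilon}^\rho(\bar y) \cap Y| > \tfrac{m}{2}$ reads simply as $|G| > \tfrac{m}{2}$. Everything then reduces to controlling the radii $r_i$ computed in $\ext$.

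First I would bound $r_j$ for good indices. For any $j,k \in G$ the triangle inequality gives $\rho(y_j, y_k) \leq \rho(y_j,\bar y) + \rho(\bar y, y_k) \leq 2\varepsilon$, so $G \subseteq B_{2\varepsilon}^\rho(y_j)$. Since $|G| > \tfrac{m}{2}$, this shows $|B_{2\varepsilon}^\rho(y_j) \cap Y| > \tfrac{m}{2}$, and therefore $r_j \leq 2\varepsilon$ for every $j \in G$ by the minimality defining $r_j$. Consequently more than half of the radii $r_1,\ldots,r_m$ are at most $2\varepsilon$. I would then check that this forces the median to satisfy $\hat r \leq 2\varepsilon$: because $|G| > \tfrac{m}{2}$ means at least $\lfloor m/2\rfloor + 1$ of the radii are $\leq 2\varepsilon$, and since $\lceil m/2\rceil \leq \lfloor m/2\rfloor + 1$ always holds, the $\lceil m/2\rceil$'th order statistic $\hat r$ is $\leq 2\varepsilon$. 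This order-statistic bookkeeping against the paper's specific definition of \texttt{median} is the one place requiring care, though it is elementary.

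Finally I would transfer the conclusion to an arbitrary returned index. Fix $i \in \cI$, so that $r_i \leq \hat r \leq 2\varepsilon$. By definition of $r_i$, the ball $B_{r_i}^\rho(y_i)$ contains strictly more than $m/2$ points of $Y$; the good set $G$ also has more than $m/2$ elements, so by the pigeonhole principle ($|A| + |B| > m$ forces $A \cap B \neq \emptyset$) there exists $k$ with $y_k \in B_{r_i}^\rho(y_i) \cap G$. Then $\rho(y_i,\bar y) \leq \rho(y_i, y_k) + \rho(y_k, \bar y) \leq r_i + \varepsilon \leq 2\varepsilon + \varepsilon = 3\varepsilon$, which is exactly the claimed bound. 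The main obstacle is purely conceptual rather than computational: verifying that all three of the properties of a pseudometric used here are the symmetry and triangle inequality (nonnegativity is only needed to make the balls well defined), so that the argument never relies on $\rho(x,y)=0 \implies x=y$ — this is precisely why the statement survives the weakening from a metric to a pseudometric.
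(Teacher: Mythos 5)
Your proof is correct and follows essentially the same route as the paper's: bound $r_j\leq 2\varepsilon$ for the good indices via the triangle inequality, conclude $\hat r\leq 2\varepsilon$, and then use the intersection of two strict-majority subsets of $Y$ plus one more triangle inequality to get $\rho(y_i,\bar y)\leq 3\varepsilon$. The only difference is that you spell out the order-statistic bookkeeping for the $\lceil m/2\rceil$'th entry, which the paper leaves implicit.
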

\begin{proof}
	Note that for any points $y_i,y_j\in  B_{\varepsilon}^\rho(\bar y)$, the triangle inequality implies the estimate $$\rho(y_i,y_j)\leq \rho(y_i,\bar y)+\rho(\bar y,y_j)\leq 2\varepsilon.$$ 
This means that any point $y_i\in B_{\varepsilon}^{\rho}(\bar y)$, at least $\frac{m}{2}$ of them, satisfies $|B_{2\varepsilon}^\rho(y_i)\cap Y|>\frac{m}{2}$ and consequently $r_i\leq2\varepsilon$. Therefore the inequality $\hat r=\texttt{median}(r_1,\ldots, r_m)\leq 2\epsilon$ holds. 

Fix an index $i\in \mathcal{I}$. Since both $B_{r_i}^\rho(y_i)$ and $B_{\varepsilon}^\rho(\bar y)$ contain a strict majority of the points in $Y$, there must exist some point in the intersection $y\in B_{r_i}^\rho(y_i)\cap B_{\varepsilon}^\rho(\bar y)$. Using the triangle inequality, we conclude $\rho(y_i,\bar y)\leq \rho(y_i,y)+\rho(y,\bar y)\leq 3\varepsilon$, thereby completing the proof.
%
%
%
%
%
\end{proof}


%
%
%

\subsubsection{Robust gradient estimator}
\label{sec:robust-grad}
The need for this last ingredient is explained at the end of Section~\ref{sec:geometric-intuition}. Namely, we will need to estimate the gradient $\nabla g(\bar x)$, thereby perturbing the term $D_h(x,\bar x)$ in the two-sided bound~\eqref{eqn:two_side_constr}. For this purpose, we make the following mild assumption that is standard in applications. Indeed, we already encountered this assumption when paring $\pboost$ with stochastic gradient methods for unconstrained optimization.


\begin{assumption}[Stochastic first-order oracle]\label{ass:stochfirstorder}
	{\rm
		Fix a probability space $(\Omega,\mathcal{F},\cP)$ and let $G\colon\R^d\times\Omega\to\R$ be a measurable map satisfying 
		$$\EE_z G(x,z)=\nabla g(x)\qquad \textrm{and}\qquad \EE_z\|G(x,z)-\nabla g(x)\|^2\leq \sigma^2.$$
		We suppose that for any point $x$, we may sample $z\in \Omega$ and compute the vector $G(x,z)$, which serves as an  unbiased estimator of the gradient $\nabla g(x)$.}
\end{assumption}

Under this assumption, we can define a \emph{weak gradient oracle} 
$\cG_{\sigma}(\cdot,\varepsilon)$ as the averge of a finite sample of stochastic gradients, i.e., for any $\hat x\in\cX$, 
$$\cG_{\sigma}(\hat x, \varepsilon):=\frac{1}{s}\sum_{i=1}^s G(\hat x, z_i) \qquad \mbox{where}\quad s=\left\lceil\frac{3\sigma^2}{\varepsilon^2}\right\rceil.$$
Taking into account the variance reduction by a factor of~$s$ and using Markov's inequality, we have
\begin{equation*}
\PP\biggl(\biggl\|\frac{1}{s}\sum_{i=1}^s G(\hat x,z_i)-\nabla g(\hat x)\biggr\|^2\geq \varepsilon^2 \biggr)\leq \frac{\sigma^2/s}{\varepsilon^2}\leq \frac{1}{3}.
\end{equation*}
That is, $\PP\bigl(\|\cG_{\sigma}(\hat x, \varepsilon)-\nabla g(\hat x)\|<\varepsilon\bigr)\geq\frac{2}{3}$, confirming that $\cG_{\sigma}(\cdot,\varepsilon)$
is indeed a weak distance oracle in the sense of~\eqref{eqn:resampling}.
Based on this oracle, we can use Algorithm~\ref{alg:stoc_prox_high_prob} to construct a \emph{robust gradient estimator} $\cG_{\sigma}(\cdot,\varepsilon, m)$, 
which returns an estimate $\widetilde\nabla g(\cdot)$.
By Lemma~\ref{lem:rob_dist_est},
\begin{equation}\label{eqn:robust-grad-bound}
\PP\bigl(\|\widetilde\nabla g(\hat x)-\nabla g(\hat x)\|\leq 3\varepsilon\bigr)\geq 1-\exp(-m/18).
\end{equation}


\subsubsection{Robust function gap estimation}
Equipped with the three ingredients described above, we present in Algorithm~\ref{alg:Robust-Estimation-Framework} a procedure to robustly estimate the gap $f(x)-f^*$.

\SetAlgoHangIndent{4em}
\begin{algorithm}[H]
	\caption{$\rfg(\cM_f(\cdot), m, \epsilon)$.}
	\label{alg:Robust-Estimation-Framework} 
	\textbf{Input:} Minimization oracle $\cM_f(\cdot)$, integer $m\in \mathbb{N}$, accuracy $\epsilon>0$. 
	
	\textbf{Step 1:} Independently generate $x_1,...,x_m$ by the oracle $\cM_f(\epsilon)$ so that  
	\begin{equation}
	\label{eqn:f-gap-2/3}
	\PP\bigl(f(x_i)- f^*\leq \epsilon\bigr)\geq \frac{2}{3}, \quad\mbox{ for all }i\in [1,m].
	\end{equation}
	
	\textbf{Step 2:} Set $\rho_1=\|\cdot\|$ to be the usual Euclidean norm and 
	compute 
	\begin{equation}
	\cI_1:=\ext\left(\{x_i\}_{i=1}^m,\,\rho_1\right).
	\end{equation} 
	
	\textbf{Step 3:} 
	Fix arbitrary $i\in \mathcal{I}_1$ and set $\hat x:=x_i$. 
    Use the robust gradient estimator to generate $$\widetilde\nabla g(\hat x):=\cG_{\sigma}(\hat x, \kappa\sqrt{\mu\epsilon},\,m).$$
	
%
%
%
%
%
%
%
%
%
    \textbf{Step 4:} Define the pseudometric  $\rho_2(x,x'):=|h(x) - h(x')+\langle \widetilde{\nabla}g(\hat x),x-x'\rangle|$ on $\dom h$ and compute $$\cI_2= \ext(\{x_t\}_{t=1}^m,\,\rho_2).$$
	
	{\bf Return:} $x_{i}$ for an arbitrary $i\in \cI_1\cap\cI_2$.
\end{algorithm} 

Thus, the first step of $\rfg(\cM_f(\cdot), m, \epsilon)$  generates $m$ statistically independent points $x_1,\ldots,x_m$ satisfying \eqref{eqn:f-gap-2/3}. The second step determines a set of points $\{x_i\}_{i\in \cI}$ that are all close to $\bar x$ with high probability. We then choose a distinguished point $\hat x:=x_i$ for an arbitrary $i\in \cI_1$, and estimate the gradient $\nabla g(\hat x)$ with $\widetilde \nabla g(\hat x)$.
The next step approximates $D_h(\cdot,\bar x)$ with a pseudometric $\rho_2$ by replacing $\nabla g(\bar x)$ with $\widetilde \nabla g(\hat x)$, and then performs robust distance estimation to find a set of points $\{x_i\}_{i\in \cI_2}$ with low value of $D_h(x_i,\bar{x})$. Finally a point $x_i$ is returned, for any $i\in \cI_1\cap \cI_2$. The intuition is that this $x_i$ simultaneously achieves low values of $\|x_i-\bar x\|$ and $D_h(x_i,\bar x)$, thus allowing us to use~\eqref{eqn:two_side_bound_reg} for robust gap estimation.

The following theorem summarizes the guarantees of the $\rfg$ procedure. 
\begin{theorem}[Robust function gap estimation]
	\label{theorem:high-prob-bound}
	With probability at least $1-2\exp\left(-\tfrac{m}{18}\right)$, the point $x=\rfg(\cM_f(\cdot), m, \epsilon)$ satisfies the guarantee
$$\|x-\bar x\|\leq 3\sqrt{\frac{2\epsilon}{\mu}},\qquad D_h(x,\bar x)\leq 65\kappa\epsilon, \qquad f(x)-f^*\leq 74\kappa\epsilon.$$
In total, the procedure queries $m$ times the oracle $\cM_f(\epsilon)$ and evaluates $m\cdot\left\lceil\frac{3\sigma^2}{\kappa^2\mu\epsilon}\right\rceil$ times the stochastic gradient oracle $G(\hat x,\cdot)$.

\end{theorem}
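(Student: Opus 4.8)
The plan is to isolate two high-probability events and then argue deterministically on their intersection. Let $A$ be the event that strictly more than $m/2$ of the points $x_1,\ldots,x_m$ generated in Step~1 satisfy $f(x_i)-f^*\leq\epsilon$. Since these are independent and each succeeds with probability at least $2/3$ by \eqref{eqn:f-gap-2/3}, Hoeffding's inequality gives $\PP(A)\geq 1-\exp(-2m(\tfrac23-\tfrac12)^2)=1-\exp(-\tfrac{m}{18})$. Let $B$ be the event $\|\widetilde{\nabla} g(\hat x)-\nabla g(\hat x)\|\leq 3\kappa\sqrt{\mu\epsilon}$, which by \eqref{eqn:robust-grad-bound} (applied with accuracy $\kappa\sqrt{\mu\epsilon}$) holds with probability at least $1-\exp(-\tfrac{m}{18})$ conditionally on any realization of $\hat x$, using that the gradient samples are independent of Step~1. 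A union bound then yields $\PP(A\cap B)\geq 1-2\exp(-\tfrac{m}{18})$, and the remainder of the argument proceeds on this event.

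On $A$, the left-hand inequality of the two-sided bound \eqref{eqn:two_side_bound_reg} together with $D_h\geq 0$ shows that every good point simultaneously satisfies $\|x_i-\bar x\|\leq\sqrt{2\epsilon/\mu}$ and $D_h(x_i,\bar x)\leq\epsilon$, so more than $m/2$ points lie in the Euclidean ball $B_{\sqrt{2\epsilon/\mu}}(\bar x)$. Applying Lemma~\ref{lem:med_mean_ext} with $\rho=\rho_1$ gives $\|x_i-\bar x\|\leq 3\sqrt{2\epsilon/\mu}$ for every $i\in\cI_1$, and in particular for $\hat x$. For the pseudometric $\rho_2$, I would write $\rho_2(x_i,\bar x)=|D_h(x_i,\bar x)+\langle\widetilde{\nabla}g(\hat x)-\nabla g(\bar x),x_i-\bar x\rangle|$ and bound $\|\widetilde{\nabla}g(\hat x)-\nabla g(\bar x)\|\leq \|\widetilde{\nabla}g(\hat x)-\nabla g(\hat x)\|+L\|\hat x-\bar x\|\leq 3\kappa\sqrt{\mu\epsilon}+3L\sqrt{2\epsilon/\mu}=3(1+\sqrt2)\kappa\sqrt{\mu\epsilon}$, using $L=\kappa\mu$ and $\hat x\in\cI_1$. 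Cauchy--Schwarz then shows every good point also obeys $\rho_2(x_i,\bar x)\leq\varepsilon_2:=(7+3\sqrt2)\kappa\epsilon$, so $>m/2$ points lie in $B^{\rho_2}_{\varepsilon_2}(\bar x)$ and Lemma~\ref{lem:med_mean_ext} with $\rho=\rho_2$ gives $\rho_2(x_i,\bar x)\leq 3\varepsilon_2$ for all $i\in\cI_2$.

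Both $\cI_1$ and $\cI_2$ have cardinality at least $\lceil m/2\rceil>m/2$ by construction of $\ext$, hence $\cI_1\cap\cI_2\neq\emptyset$ and the returned point $x$ is well defined. From $x\in\cI_1$ we read off $\|x-\bar x\|\leq 3\sqrt{2\epsilon/\mu}$, the first claim. To recover the Bregman term I would use $D_h(x,\bar x)\leq \rho_2(x,\bar x)+|\langle\widetilde{\nabla}g(\hat x)-\nabla g(\bar x),x-\bar x\rangle|$, bounding $\rho_2(x,\bar x)\leq 3\varepsilon_2=(21+9\sqrt2)\kappa\epsilon$ (from $x\in\cI_2$) and the cross term by $3(1+\sqrt2)\kappa\sqrt{\mu\epsilon}\cdot 3\sqrt{2\epsilon/\mu}=(18+9\sqrt2)\kappa\epsilon$ (from $x\in\cI_1$); summing gives $D_h(x,\bar x)\leq (39+18\sqrt2)\kappa\epsilon\leq 65\kappa\epsilon$. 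The right-hand inequality of \eqref{eqn:two_side_bound_reg} with $\tfrac{L}{2}\|x-\bar x\|^2\leq 9\kappa\epsilon$ then yields $f(x)-f^*\leq 65\kappa\epsilon+9\kappa\epsilon=74\kappa\epsilon$. The oracle counts follow by inspection: Step~1 issues $m$ calls to $\cM_f(\epsilon)$, while the robust gradient estimator makes $m$ queries to the weak oracle, each consuming $\lceil 3\sigma^2/(\kappa\sqrt{\mu\epsilon})^2\rceil=\lceil 3\sigma^2/(\kappa^2\mu\epsilon)\rceil$ stochastic gradients.

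The only delicate point, and where I expect the real work to lie, is controlling the perturbation of $\rho_2$ incurred by replacing the exact gradient $\nabla g(\bar x)$ with the estimate $\widetilde{\nabla}g(\hat x)$. The gradient error is of order $\kappa\sqrt{\mu\epsilon}$, which at first glance seems too coarse; the saving grace is that it is always paired, via Cauchy--Schwarz, with a cluster radius of order $\sqrt{\epsilon/\mu}$, and the product collapses to order $\kappa\epsilon$ through the cancellation $\sqrt{\mu\epsilon}\cdot\sqrt{\epsilon/\mu}=\epsilon$. This is precisely why the loose target accuracy $\kappa\sqrt{\mu\epsilon}$ for the gradient, and hence the cheap sample count $\sigma^2/(\kappa^2\mu\epsilon)$, suffices. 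Organizing the chain of triangle inequalities and the smoothness bound $\|\nabla g(\hat x)-\nabla g(\bar x)\|\leq L\|\hat x-\bar x\|$ so that every contribution lands at the common scale $\kappa\epsilon$ is the main bookkeeping effort, and it is what pins the final constants at $65$ and $74$.
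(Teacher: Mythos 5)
Your proposal follows essentially the same route as the paper's proof: Hoeffding for the event that a majority of the $x_i$ satisfy $f(x_i)-f^*\leq\epsilon$, the left side of \eqref{eqn:two_side_bound_reg} to place those points in both the Euclidean ball and the $\rho_2$-ball around $\bar x$, Lemma~\ref{lem:med_mean_ext} applied once per pseudometric, and the same chain of triangle/Cauchy--Schwarz estimates (with the same $\sqrt{\mu\epsilon}\cdot\sqrt{\epsilon/\mu}=\epsilon$ cancellation) landing on the constants $65$ and $74$ and the same oracle counts. The only caveat is your justification that $|\cI_1|,|\cI_2|\geq\lceil m/2\rceil>m/2$ forces $\cI_1\cap\cI_2\neq\emptyset$: for even $m$ one only gets $\lceil m/2\rceil=m/2$, so this step needs $m$ odd or a slightly larger cutoff index in $\ext$ --- a point the paper itself leaves implicit.
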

\begin{proof}
Define the index set $\mathcal{J}=\{i\in [1,m]: f(x_i)-f^*\leq \epsilon\}$ and define the event
$$E:=\left\{|\mathcal{J}|>\frac{m}{2}\right\}.$$
Hoeffding's inequality for Bernoulli random variables guarantees
	$$\PP\left(E\right)\geq 1-\exp(-m/18).$$
Moreover, using the left inequality in~\eqref{eqn:two_side_bound_reg}, we deduce
\begin{equation}\label{eqn:joint_ineq_needed}
\|x_i-\bar x\|\leq \sqrt{\frac{2\epsilon}{\mu}}~~\textrm{and}~~D_h(x_i,\bar x)\leq \epsilon\qquad \textrm{for all } i\in \mathcal{J}.
\end{equation}
Henceforth, suppose that the event $E$ occurs. Then Lemma~\ref{lem:med_mean_ext} implies
\begin{equation}\label{eqn:dist_est_need}
\|x_i-\bar x\|\leq 3\sqrt{\frac{2\epsilon}{\mu}} \qquad \textrm{ for all }i\in \cI_1.
\end{equation}
As discussed in Section~\ref{sec:robust-grad}, specifically~\eqref{eqn:robust-grad-bound}, the estimate $\widetilde\nabla g(\hat x)$ generated by the robust gradient estimator $\cG_{\sigma}(\hat x, \kappa\sqrt{\mu\epsilon}, m)$ satisfies
\begin{equation}\label{eqn:get_dat_event}
\PP(\|\widetilde{\nabla}g-\nabla g(\hat x)\|\leq 3\kappa\sqrt{\mu\epsilon}~\mid ~E)\geq 1-\exp(-m/18).
\end{equation}
Define the event $\hat E:=\{\|\widetilde{\nabla}g(\hat x)-\nabla g(\hat x)\|\leq 3\kappa\sqrt{\mu\epsilon}\}$ and suppose that $E\cap \hat E$ occurs. Then, we compute
	\begin{align}
        \|\widetilde{\nabla}g(\hat x) - \nabla g(\bar x)\| & \leq \|\widetilde{\nabla}g(\hat x)-\nabla g(\hat x)\| + \|\nabla g(\hat x) - \nabla g(\bar x)\|\notag\\
	& \leq 
	 3\kappa\sqrt{\mu\epsilon} + L\|\hat x-\bar x\|\label{eqn:ver_lipgrad}\\
	& \leq 
	 3\kappa\sqrt{\mu\epsilon} + 3L\sqrt{2\epsilon/\mu} = 3(1+\sqrt{2})\kappa\sqrt{\mu\epsilon},\label{eqn:gin_grad_est}
	\end{align}
	where \eqref{eqn:ver_lipgrad} follows from \eqref{eqn:get_dat_event} and Lipschitz continuity of $\nabla g$, while \eqref{eqn:gin_grad_est} follows from \eqref{eqn:joint_ineq_needed}.
Consequently, for each index $i\in \mathcal{J}$, we successively deduce 
	\begin{align}
        \rho_2(x_i,\bar x) & =  |h(x_i)-h(\bar x)+\langle \widetilde{\nabla}g(\hat x),x_i-\bar x\rangle|\notag\\
                           & \leq  D_h(x_i,\bar x)  + |\langle \widetilde{\nabla}g(\hat x)-\nabla g(\bar x),x_i-\bar x\rangle|\notag\\
	& \leq  \epsilon + 3(1+\sqrt{2})\kappa\sqrt{\mu\epsilon}\cdot \sqrt{2\epsilon/\mu}\label{eqn:rhotwo}\\
	& =  (1+(3\sqrt{2}+6)\kappa)\epsilon, \notag
	\end{align}
	where \eqref{eqn:rhotwo} follows from \eqref{eqn:joint_ineq_needed} and \eqref{eqn:gin_grad_est}.
	Therefore, appealing to Lemma~\ref{lem:rob_dist_est} in the event $E\cap \hat E$, we conclude 
\begin{equation}\label{eqn:get_breg}
\rho_2(x_{i},\bar x)\leq 3(1+(3\sqrt{2}+6)\kappa)\epsilon \qquad \textrm{ for all }i\in \cI_2.
\end{equation}

Finally, fix an arbitrary index $i\in \cI_1\cap \cI_2$. We therefore deduce 
\begin{align}
D_{h}(x_i,\bar x)&\leq \rho_2(x_{i},\bar x) + |\langle \nabla g(\bar x)-\widetilde{\nabla}g, x_{i}-\bar x\rangle| \notag\\
&\leq 3(1+(3\sqrt{2}+6)\kappa)\epsilon + 3(1+\sqrt{2})\kappa\sqrt{\mu\epsilon}\cdot 3\sqrt{\frac{2\epsilon}{\mu}}\label{eqn:just0fy_breg_fin} \\
&= 3(1+(6\sqrt{2}+12)\kappa)\epsilon\leq  65\kappa\epsilon\label{eqn:improved},
\end{align}
where  \eqref{eqn:just0fy_breg_fin} follows from the estimates \eqref{eqn:dist_est_need}, \eqref{eqn:gin_grad_est}, and \eqref{eqn:get_breg}. 
Using the right side of ~\eqref{eqn:two_side_bound_reg}, we therefore conclude
  \begin{equation*}
   f(x_{i}) - f(\bar x)  \leq   D_{h}(x_i,\bar x)+\frac{L}{2}\|x_{i}-\bar x\|^2 
     \leq   65\kappa\epsilon+9\kappa\epsilon=74\kappa\epsilon,
   \end{equation*}
where the last inequality follows from the estimates \eqref{eqn:dist_est_need} and \eqref{eqn:improved}. Noting 
$$\PP(E\cap \hat E)=\PP(\hat E\mid E)\PP(E)\geq \left(1-\exp\left(-\tfrac{m}{18}\right)\right)\left(1-\exp\left(-\tfrac{m}{18}\right)\right)\geq 1-2\exp\left(-\tfrac{m}{18}\right),$$
 completes the proof. 
\end{proof}

With Theorem~\ref{theorem:high-prob-bound} at hand, we can now replace robust distance estimation with $\rfg$ within the $\cb$ framework, thereby making $\cb$ applicable to convex composite problems. The following two sections illustrate the consequences of the resulting method for regularized empirical risk minimization and (proximal) stochastic approximation algorithms.

\subsection{Consequences for empirical risk minimization}
In this section, we explore the consequences of $\rfg$ and $\cb$ for regularized empirical risk minimization. In particular, we will boost the low-probability guarantees developed in the seminal work \cite{shalev2009stochastic} for strongly convex problems to high confidence outcomes. The following assumption summarizes the setting of this section.

\begin{assumption}
	\label{assumption:lip-scvx-bounded}{\rm 
	Fix a probability space $(\Omega,\mathcal{F},\cP)$ and equip $\R^d$ with the Borel $\sigma$-algebra.  Throughout, we consider the optimization problem 
$$\min_x~ f(x):= g(x) + h(x)\qquad \textrm{where}\qquad g(x)=\EE_{z\sim\cP}[g(x,z)],$$
under the following assumptions. 
		\begin{enumerate}
		\item {\bf (Measurability)} 
		The function $g\colon\R^d\times\Omega\to\R$ is measurable. 
			\item {\bf (Strong convexity)} 
				The function $h\colon\R^d\to\R\cup\{+\infty\}$ is convex and there exists $ \mu>0$ such that the function $g(x,z)+h(x)$ is $\mu$-strongly convex for a.e. $z\sim\cP$. 
			\item {\bf (Lipschitz continuity)} There exists a measurable map $\ell\colon\Omega\to\R$ and a real $\bar \ell>0$ satisfying the moment bound $\sqrt{\EE_z\ell(z)^2}\leq \bar \ell$ and the Lipschitz condition 
		$$|g(x,z)-g(y,z)|\leq \ell(z)\|x-y\|\qquad \forall x\in U, z\in \Omega,$$		where $U$ is some open neighborhood of $\dom h$.
			\item {\bf (Smoothness)}   The function $g\colon\R^d\to\R$ is $L$-smooth.	
		\end{enumerate} 
	}
\end{assumption}

The first three assumptions are slight modifications of those used in \cite{shalev2009stochastic}, while the additional smoothness assumption on~$g$ will be necessary in the sequel to obtain high-confidence guarantees. That being said, the sample efficiency will depend only polylogarithmically on $L$, and therefore we will be able to treat nonsmooth loss function $g(x,z)$ using standard smoothing techniques. Under the first three assumptions, the authors of \cite{shalev2009stochastic} obtained the following guarantee for the accuracy of empirical risk minimization.

\begin{lemma}[{\cite[Theorem 6]{shalev2009stochastic}}]
	\label{lemma:con-gen-bound}
	Let the set $S\subset\Omega$ consist of $n$ i.i.d. samples drawn from $\cP$. Then the minimizer of the regularized empirical risk 
	$$x_S:=\argmin_x~ \frac{1}{n}\sum_{z\in S} g(x,z)+h(x)$$
	satisfies the generalization bound
    $$\EE_S[f(x_{S})-f^*]\leq  \frac{2 {\bar l}^2}{\mu n}.$$ 
\end{lemma}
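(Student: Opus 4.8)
The plan is to establish this classical bound via an \emph{algorithmic stability} argument, in the spirit of \cite{shalev2009stochastic}. Write the regularized empirical risk as $F_S(x):=\frac1n\sum_{z\in S}g(x,z)+h(x)$, and note that since each $g(\cdot,z)+h$ is $\mu$-strongly convex, the average $F_S$ is $\mu$-strongly convex with minimizer $x_S$. The first step is a standard reduction: by optimality of $x_S$ we have $F_S(x_S)\le F_S(\bar x)$, and taking expectations gives $\EE_S[F_S(x_S)]\le \EE_S[F_S(\bar x)]=f(\bar x)=f^*$, because $\EE_S[\frac1n\sum_{z\in S}g(\bar x,z)]=g(\bar x)$ and $h(\bar x)$ is deterministic. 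Consequently it suffices to bound the expected generalization gap
\[
\EE_S[f(x_S)-f^*]\le \EE_S[f(x_S)-F_S(x_S)]=\EE_S\Bigl[g(x_S)-\tfrac1n\textstyle\sum_{z\in S}g(x_S,z)\Bigr],
\]
where the $h$-terms cancel.

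Second, I would symmetrize. Introduce a fresh sample $z\sim\cP$ independent of $S=(z_1,\dots,z_n)$ and let $\tilde S:=(z,z_2,\dots,z_n)$ be the perturbed set in which $z_1$ is replaced by $z$. Exchangeability of the i.i.d.\ pair $(z_1,z)$ yields $\EE_S[g(x_S)]=\EE[g(x_S,z)]=\EE[g(x_{\tilde S},z_1)]$, while symmetry across the coordinates gives $\EE_S[\frac1n\sum_{z'\in S}g(x_S,z')]=\EE[g(x_S,z_1)]$. Combining, the generalization gap becomes the \emph{stability} term
\[
\EE_S[f(x_S)-f^*]\le \EE\bigl[g(x_{\tilde S},z_1)-g(x_S,z_1)\bigr].
\]

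Third comes the heart of the argument: bounding $\|x_S-x_{\tilde S}\|$. Using $\mu$-strong convexity of $F_S$ together with optimality of $x_S$, and of $F_{\tilde S}$ together with optimality of $x_{\tilde S}$, and adding the two resulting inequalities, the second-order terms combine to give $\mu\|x_S-x_{\tilde S}\|^2$ on the right, while the left reduces (after using that $F_S$ and $F_{\tilde S}$ differ only in the single averaged summand $z_1$ versus $z$) to $\frac1n$ times a difference of four loss values. Applying the Lipschitz condition $|g(x,z')-g(y,z')|\le \ell(z')\|x-y\|$ to these differences yields
\[
\mu\|x_S-x_{\tilde S}\|^2\le \frac{\ell(z_1)+\ell(z)}{n}\,\|x_S-x_{\tilde S}\|,\qquad\text{hence}\qquad \|x_S-x_{\tilde S}\|\le \frac{\ell(z_1)+\ell(z)}{\mu n}.
\]
Feeding this back through the Lipschitz bound gives $g(x_{\tilde S},z_1)-g(x_S,z_1)\le \ell(z_1)\cdot\frac{\ell(z_1)+\ell(z)}{\mu n}$; taking expectations and using independence of $z_1,z$ together with $\EE_z[\ell(z)^2]\le\bar\ell^2$ and $(\EE_z\ell(z))^2\le\EE_z[\ell(z)^2]\le\bar\ell^2$ produces $\frac{1}{\mu n}(\bar\ell^2+\bar\ell^2)=\frac{2\bar\ell^2}{\mu n}$, as claimed.

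The routine parts are the reduction and the final moment estimates; the step I expect to require the most care is the symmetrization identity, where one must correctly invoke exchangeability to rewrite the population term $\EE[g(x_S,z)]$ as the one-sample-perturbed quantity $\EE[g(x_{\tilde S},z_1)]$ without introducing spurious dependence. The stability computation itself is clean, relying only on the two strong-convexity/optimality inequalities and the fact that $F_S$ and $F_{\tilde S}$ differ in exactly one averaged summand.
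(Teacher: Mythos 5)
The paper does not prove this lemma; it imports it verbatim from \cite[Theorem 6]{shalev2009stochastic}, so there is no internal proof to compare against. Your argument is a correct and complete reconstruction of the replace-one stability proof used in that reference: the reduction $\EE_S[F_S(x_S)]\le f^*$, the symmetrization to the stability term $\EE[g(x_{\tilde S},z_1)-g(x_S,z_1)]$, the two strong-convexity/optimality inequalities yielding $\|x_S-x_{\tilde S}\|\le(\ell(z_1)+\ell(z))/(\mu n)$, and the final moment bound via independence and $(\EE\ell)^2\le\EE\ell^2$ all check out, giving exactly the constant $2\bar\ell^2/(\mu n)$. The only point worth making explicit is that the Lipschitz condition in Assumption~\ref{assumption:lip-scvx-bounded} is only assumed on a neighborhood $U$ of $\dom h$, but this suffices since $x_S$ and $x_{\tilde S}$ both lie in $\dom h$.
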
 

We will see now how to equip this guarantee with a high confidence bound using $\cb$. Recall that to apply the $\rfg$ algorithm, we require an unbiased gradient estimator $G\colon \R^d\times \Omega\to\R^d$ for $g$. Let us therefore simply declare
$$G(x,z):=\nabla g(x,z).$$
Then we can upper-bound the variance by the second moment
$$\EE_z \|G(x,z)-\nabla g(x)\|^2\leq 2(\EE_z\|\nabla g(x,z)\|^2+\EE_z\|\nabla g(x)\|^2)\leq 4\bar l^2.$$
%
We are now ready to present Algorithm~\ref{alg:stoc_prox_high_prob_stoc_appr2erm} as an instantiation of the $\cb$ procedure for \emph{regularized} empirical risk minimization.
In particular, we can still use $\erm()$ in Algorithm~\ref{alg:stoc_prox_high_prob0} for the proximal subproblem, with the definition $f(y,z_i):=g(y,z_i)+h(y)$. The robust distance estimator $\rerm()$ in Algorithm~\ref{alg:stoc_prox_high_prob1} can be used without any change.

\begin{algorithm}[H]
	{\bf Input:}  accuracy $\delta>0$,	iterations $m,T\in\mathbb{N}$\\
	
    Set $\lambda_{-1}=0$, $x_{-1}=0$
	
	{\bf Step } $j=0,\ldots,T$:\\

	\hspace{20pt}  $x_{j}=\rerm\left(\frac{54 \bar l^2}{(\mu+\lambda_{j-1}) \delta},\,m,\,\lambda_{j-1},\,x_{j-1}\right)$\\

	

	 \smallskip
	 
	Define the minimization oracle $\cM_{T}(\epsilon):=\erm\left(\frac{6 \bar l^2}{(\mu+\lambda_{T})\delta},\,\lambda_{T},\, x_{T}\right).$
	 
	 
%
%
%
	 

	 {\bf Return} $x_{T+1}=\rfg\left(\cM_{T}\left(\frac{\delta( \mu+\lambda_T)}{222(L+\lambda_T)}\right),\,m,\,\frac{\delta( \mu+\lambda_T)}{222(L+\lambda_T)}\right)$

	\caption{$\cbermc(\delta,T,m)$	
	}
	\label{alg:stoc_prox_high_prob_stoc_appr2erm}
\end{algorithm}

Notice that in Algorithm~\ref{alg:stoc_prox_high_prob_stoc_appr2erm}, we only need to call $\rfg$ in the last cleanup stage, since the intermediate iterations of $\cb$ only rely on distance estimates to the optimal solutions and not on the function gap.
The following theorem and its corollary are immediate consequences of Theorems~\ref{thm:conf_boost_basic} and \ref{theorem:high-prob-bound}.

\begin{theorem}[Efficiency of $\cbermc$]
	\label{theorem:eff_erm_cons_reg}
	Fix $\delta>0$ and integers $T,m\in \mathbb{N}$.
	Then with probability at least $1-(T+3)\exp\left(-\frac{m}{18}\right)$, the point $x_{T+1}=\cbermc(\delta,T,m)$
	satisfies
	$$f(x_{T+1})-f^*\leq \left(1+\sum_{i=0}^T \frac{\lambda_i}{\mu+\lambda_{i-1}}\right)\delta.$$ 
\end{theorem}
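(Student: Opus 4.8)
The plan is to recognize $\cbermc(\delta,T,m)$ as a concrete instantiation of the generic $\cb$ template (Algorithm~\ref{alg:stoc_prox_basic}) run with failure parameter $p=\exp(-\tfrac{m}{18})$, so the conclusion follows by specializing Theorem~\ref{thm:conf_boost_basic}. Concretely, I would reduce everything to verifying the two defining hypotheses of that template: that the proximal iterates $x_0,\dots,x_T$ satisfy the conditional distance bound \eqref{eqn:keyesneededblah} with radius $\varepsilon_{j-1}=\sqrt{2\delta/(\mu+\lambda_{j-1})}$, and that the cleanup iterate $x_{T+1}$ satisfies the conditional gap bound \eqref{eqn:clean_up}. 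The only structural departure from the smooth case (the $\cberm$ theorem) is that the cleanup now calls $\rfg$ instead of a plain robust distance estimator; since $\rfg$ fails with probability $2\exp(-\tfrac{m}{18})$ rather than $\exp(-\tfrac{m}{18})$, the union bound will produce the factor $(T+3)$ in place of $(T+2)$.

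For the distance steps, I would observe that the $j$-th subproblem minimizes $f^{j-1}=g+\bigl(h+\tfrac{\lambda_{j-1}}{2}\|\cdot-x_{j-1}\|^2\bigr)$, which is again a composite objective of the exact form required by Lemma~\ref{lemma:con-gen-bound}, now $(\mu+\lambda_{j-1})$-strongly convex with the same Lipschitz modulus $\bar\ell$ (the quadratic is absorbed into the convex part). Applying that generalization bound to the regularized empirical minimizer on $n_{j-1}=\tfrac{54\bar\ell^2}{(\mu+\lambda_{j-1})\delta}$ fresh i.i.d.\ samples gives $\EE[f^{j-1}(x_S)-\min f^{j-1}]\le \tfrac{\delta}{27}$; Markov's inequality then yields $f^{j-1}(x_S)-\min f^{j-1}\le \tfrac{\delta}{9}$ with probability $\tfrac23$, and strong convexity converts this into $\|x_S-\bar x_j\|\le \tfrac13\varepsilon_{j-1}$ with probability $\tfrac23$. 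Thus $\erm$ is a weak distance oracle at radius $\tfrac13\varepsilon_{j-1}$, and Lemma~\ref{lem:rob_dist_est} boosts the $m$-trial estimator $\rerm$ to radius $\varepsilon_{j-1}$ at confidence $1-\exp(-\tfrac{m}{18})$, establishing \eqref{eqn:keyesneededblah}. Because each $\erm$ call draws independent samples, this bound holds conditionally on the success event $E_{j-1}$ of the previous steps, exactly as required.

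For the cleanup step I would fold the quadratic into the \emph{smooth} part, writing $f^T=g^T+h$ with $g^T=g+\tfrac{\lambda_T}{2}\|\cdot-x_T\|^2$, which is $(L+\lambda_T)$-smooth and $(\mu+\lambda_T)$-strongly convex with condition number $\kappa_T=\tfrac{L+\lambda_T}{\mu+\lambda_T}$; this is the decomposition for which Theorem~\ref{theorem:high-prob-bound} applies. The oracle $G(x,z)=\nabla g(x,z)$ (plus the deterministic $\lambda_T(x-x_T)$) is unbiased for $\nabla g^T$ with variance at most $4\bar\ell^2$, fulfilling Assumption~\ref{ass:stochfirstorder}. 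I would then check, again via Lemma~\ref{lemma:con-gen-bound} and Markov, that $\cM_T$ is a minimization oracle for $f^T$ at the accuracy $\epsilon'=\tfrac{\delta(\mu+\lambda_T)}{222(L+\lambda_T)}=\tfrac{\delta}{222\kappa_T}$ fed into $\rfg$. Theorem~\ref{theorem:high-prob-bound} then returns $x_{T+1}$ with $f^T(x_{T+1})-\min f^T\le 74\kappa_T\,\epsilon'=\tfrac{\delta}{3}\le\delta$ with probability $1-2\exp(-\tfrac{m}{18})$, conditionally on $E_T$ — precisely \eqref{eqn:clean_up} with the doubled failure probability.

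Finally I would assemble the pieces exactly as in the proof of Theorem~\ref{thm:conf_boost_basic}: the telescoping argument gives $\PP[E_T]\ge 1-(T+1)\exp(-\tfrac{m}{18})$, and multiplying by the conditional cleanup success probability $1-2\exp(-\tfrac{m}{18})$ yields overall success probability at least $1-(T+3)\exp(-\tfrac{m}{18})$; on that event the error decomposition \eqref{eqn:func_progress}, i.e.\ bound \eqref{eqn:gap_bounds}, delivers $f(x_{T+1})-f^*\le\bigl(1+\sum_{i=0}^T\tfrac{\lambda_i}{\mu+\lambda_{i-1}}\bigr)\delta$. I expect the main obstacle to be the accuracy/condition-number bookkeeping in the cleanup rather than any genuinely new inequality: one must notice that the quadratic belongs to the smooth part so that $\rfg$ sees $\kappa_T$ (not $L/\mu$), that the factor $222=3\cdot74$ is engineered so the $74\kappa_T$ blow-up of Theorem~\ref{theorem:high-prob-bound} collapses back to $\delta/3$, and that it is precisely $\rfg$'s two-event failure analysis that forces $(T+3)$ rather than $(T+2)$ in the final confidence.
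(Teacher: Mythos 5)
Your proposal is correct and is essentially the paper's own argument: the paper presents Theorem~\ref{theorem:eff_erm_cons_reg} as an immediate consequence of Theorems~\ref{thm:conf_boost_basic} and~\ref{theorem:high-prob-bound}, and your write-up supplies exactly that verification — the same sample-count/Markov/strong-convexity chain for the proximal steps, the same $74\kappa_T\cdot\frac{\delta}{222\kappa_T}=\frac{\delta}{3}$ bookkeeping for the cleanup, and the same $(T+1)p+2p$ union bound explaining the $(T+3)$. One small caveat on the step where you ``check that $\cM_T$ is a minimization oracle at accuracy $\epsilon'$'': with the sample count as printed, $\erm\bigl(\tfrac{6\bar\ell^2}{(\mu+\lambda_T)\delta},\lambda_T,x_T\bigr)$, Lemma~\ref{lemma:con-gen-bound} only gives expected gap $\delta/3$, which Markov cannot convert to a $2/3$-confidence bound at level $\epsilon'=\delta/(222\kappa_T)$; the argument goes through only if the sample count scales with the oracle's accuracy argument $\epsilon$ (i.e.\ $\tfrac{6\bar\ell^2}{(\mu+\lambda_T)\epsilon}$), which is evidently the intended reading since $\epsilon$ otherwise never appears on the right-hand side of the definition of $\cM_T(\epsilon)$.
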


\begin{corollary}[Efficiency of $\cbermc$ with geometric decay]\label{cor:eff_erm_cons_reg}
	Fix a target  accuracy $\epsilon>0$ and a probability of failure $p\in (0,1)$. Define the algorithm parameters:
	$$T=\left\lceil \log_{2}\left( \kappa\right)\right\rceil,\qquad m=\left\lceil 18\ln\left(\frac{T+3}{p}\right)\right\rceil, \qquad \delta=\frac{\epsilon}{4+2T},\qquad \lambda_{i}=\mu 2^i.$$
	Then the point $x_{T+1}=\cbermc(\delta,T,m)$ satisfies 
	$$\PP(f(x^{T+1})-f(x^*)\leq \epsilon)\geq 1-p.$$
	Moreover, the total number of samples used by the algorithm is 
	\begin{equation}\label{eqn:final_constrained_ERM}
	\mathcal{O}\left(\ln^2( \kappa)\ln\left(\frac{\ln(\kappa)}{p}\right)\cdot\frac{\bar{\ell}^2}{\epsilon\mu} \right).
	\end{equation}
\end{corollary}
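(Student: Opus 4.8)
The plan is to obtain both assertions directly from Theorem~\ref{theorem:eff_erm_cons_reg} by substituting the geometric schedule, and then to carry out the sample bookkeeping. First I would control the accuracy multiplier: with $\lambda_i=\mu 2^i$ and $\lambda_{-1}=0$, the $i=0$ term of $\sum_{i=0}^T\frac{\lambda_i}{\mu+\lambda_{i-1}}$ equals $1$, and for $i\geq 1$ one has $\frac{\lambda_i}{\mu+\lambda_{i-1}}=\frac{2^i}{1+2^{i-1}}\leq 2$, so $1+\sum_{i=0}^T\frac{\lambda_i}{\mu+\lambda_{i-1}}\leq 2+2T$. Feeding $\delta=\frac{\epsilon}{4+2T}$ into Theorem~\ref{theorem:eff_erm_cons_reg} gives $f(x_{T+1})-f^*\leq(2+2T)\delta\leq\epsilon$ on the good event, while the failure bound $(T+3)\exp(-m/18)$ supplied there becomes $\leq p$ once $m=\lceil 18\ln\frac{T+3}{p}\rceil$ forces $\exp(-m/18)\leq\frac{p}{T+3}$. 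This settles the confidence statement.

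The real work is counting samples. In proximal stage $j\in\{0,\dots,T\}$ the call $\rerm\bigl(\frac{54\bar\ell^2}{(\mu+\lambda_{j-1})\delta},m,\lambda_{j-1},x_{j-1}\bigr)$ makes $m$ queries to $\erm$, each using $N_j:=\frac{54\bar\ell^2}{(\mu+\lambda_{j-1})\delta}$ samples, so the stage costs $mN_j$. Using only $\mu+\lambda_{j-1}\geq\mu$ I would bound $N_j\leq\frac{54\bar\ell^2}{\mu\delta}$ and hence the proximal stages by $m(T+1)\cdot\frac{54\bar\ell^2}{\mu\delta}$. Substituting $T=\Theta(\ln\kappa)$, $m=\Theta\bigl(\ln\frac{\ln\kappa}{p}\bigr)$, and $\frac1\delta=\frac{4+2T}{\epsilon}=\Theta\bigl(\frac{\ln\kappa}{\epsilon}\bigr)$ yields precisely $\mathcal{O}\bigl(\ln^2\kappa\,\ln\frac{\ln\kappa}{p}\cdot\frac{\bar\ell^2}{\mu\epsilon}\bigr)$: one $\ln\kappa$ comes from the $T+1$ stages and one from $1/\delta$. (The geometric decay of $N_j$ in fact gives $\sum_j N_j=\mathcal{O}(\frac{\bar\ell^2}{\mu\delta})$, which would save a factor $\ln\kappa$, but the stated bound only requires the crude per-stage estimate.)

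Next I would confirm the cleanup stage is lower order. The final call $\rfg(\cM_T(\epsilon'),m,\epsilon')$ with $\epsilon'=\frac{\delta(\mu+\lambda_T)}{222(L+\lambda_T)}$ issues $m$ oracle queries and, by Theorem~\ref{theorem:high-prob-bound}, $m\lceil\frac{3\sigma^2}{\kappa_T^2(\mu+\lambda_T)\epsilon'}\rceil$ stochastic-gradient evaluations, where $\sigma^2\leq 4\bar\ell^2$ and $\kappa_T=\frac{L+\lambda_T}{\mu+\lambda_T}$. Since $T=\lceil\log_2\kappa\rceil$ gives $L\leq\lambda_T<2L$, both $\mu+\lambda_T$ and $L+\lambda_T$ are $\Theta(L)=\Theta(\mu\kappa)$ and $\kappa_T\leq 2$; with $\epsilon'=\frac{\delta}{222\kappa_T}$ this collapses both the per-query oracle cost and the gradient budget to $\Theta(\frac{\bar\ell^2}{L\delta})$, a factor $\kappa$ below a single proximal stage. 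Hence the cleanup is dominated and the total is as claimed.

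\emph{Main obstacle.} The only genuinely delicate point is verifying that the cleanup call is a valid instance of Theorem~\ref{theorem:high-prob-bound}: that $\cM_T(\epsilon')$ is an $\epsilon'$-accurate minimization oracle for the $(\mu+\lambda_T)$-strongly convex $f^T$ (via Lemma~\ref{lemma:con-gen-bound} applied to the regularized problem together with Markov), and that the factor $74\kappa_T$ in Theorem~\ref{theorem:high-prob-bound}, applied with $\epsilon'=\frac{\delta}{222\kappa_T}$, returns an $f^T$-gap $\leq\delta$ as required by \eqref{eqn:clean_up}. Since Theorem~\ref{theorem:eff_erm_cons_reg} already packages this verification, invoking it lets me bypass re-deriving these constants, leaving only the substitutions above.
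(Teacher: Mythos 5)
Your proposal is correct and follows exactly the route the paper intends: the paper states this corollary as an immediate consequence of Theorem~\ref{theorem:eff_erm_cons_reg} and omits the computation, and your substitution of the geometric schedule (giving $1+\sum_{i=0}^T\frac{\lambda_i}{\mu+\lambda_{i-1}}\leq 2+2T$ so that $(2+2T)\delta\leq\epsilon$), the choice of $m$ forcing $(T+3)e^{-m/18}\leq p$, and the sample count $m(T+1)\cdot\frac{54\bar\ell^2}{\mu\delta}=\mathcal{O}\bigl(\ln^2(\kappa)\ln\bigl(\tfrac{\ln\kappa}{p}\bigr)\tfrac{\bar\ell^2}{\epsilon\mu}\bigr)$ with the cleanup stage dominated (since $\mu+\lambda_T=\Theta(L)$) all check out. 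Your parenthetical that the geometric decay of the per-stage sample sizes would in fact shave a factor of $\ln\kappa$ off the proximal stages is a correct observation, though, as you note, not needed for the stated bound.
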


Thus, \pboost  endows regularized empirical risk minimization  with high confidence guarantees at an overhead cost that is only polylogarithmic in $\kappa$ and logarithmic in $1/p$.
In particular, observe that the sample complexity \eqref{eqn:final_constrained_ERM} established in Corollary~\ref{cor:eff_erm_cons_reg} depends on the smoothness parameter $L$ (through $\kappa=L/\mu$) only polylogarithmically. Consequently, it appears plausible that if the losses $g(\cdot,z)$ are nonsmooth, we may simply replace them by a smooth approximation and apply \cbermc. The price to pay should then only be polylogarithmic in the target accuracy $\epsilon$. Let us formally see how this can be done. To this end, we will assume that the optimization problem in question is to minimize a sum of an expectation of convex functions, a deterministic smooth and strongly convex function (e.g. squared $\ell_2$ norm), and a nonsmooth regularizer.

\begin{assumption}\label{ass:innonsmooth}
{\rm
Consider the optimization problem 
\begin{equation}\label{eqn:target_nonsmooth}
\min_x~ f(x):= \EE_{z\sim\cP}[g(x,z)] + \varphi(x)+ h(x)
\end{equation}
under the following assumptions. 
		\begin{enumerate}
		\item {\bf (Measurability)} 
		The function $g\colon\R^d\times\Omega\to\R$ is measurable and the assignment $x\mapsto g(x,z)$ is convex for a.e. $z\in \Omega$. 
			\item {\bf (Strong convexity)} 
				The  function $h\colon\R^d\to\R\cup\{+\infty\}$ is convex and there exist parameters $\mu,\beta>0$ such that the function $\varphi\colon\R^d\to\R$ is $\mu$-strongly convex and $\beta$-smooth for a.e. $z\in \Omega$. 
			\item {\bf (Lipschitz continuity)} There exists a measurable map $\ell\colon\Omega\to\R$ and a real $\bar \ell>0$ satisfying the moment bound $\sqrt{\EE_z\ell(z)^2}\leq \bar \ell$ and the Lipschitz condition 
		$$|g(x,z)-g(y,z)|\leq \ell(z)\|x-y\|\qquad \forall x\in \R^d, z\in \Omega.$$		
		\end{enumerate} }
\end{assumption}

%
%
%
%

The strategy we follow is to simply replace $g(\cdot,z)$ by a smooth approximation and then apply $\cbermc$. We now make precise what we mean by a smooth approximation. Assumptions of this type are classical in convex optimization; see for example Nesterov \cite{smooth_min_nonsmooth} and Beck-Teboulle \cite{smoothing_beckT}.

\begin{assumption}[Smoothing]\label{ass:smoothing}{\rm
Suppose that for any parameter $\epsilon>0$, there exist measurable functions $g_{\epsilon}\colon\R^d\times\Omega\to\R$ and $\ell_{\epsilon},L_{\epsilon}\colon\Omega\to\R_+$ such that $g_{\epsilon}(\cdot,z)$ is Lipschitz continuous with constant $\ell_{\epsilon}(z)$ and its gradient is Lipschitz continuous with constant $L_{\epsilon}(z)$,
and the estimate holds:
\begin{equation}\label{eqn:uniclose}
|g(x,z)-g_{\epsilon}(x,z)|\leq \epsilon\qquad \textrm{for all } x\in\R^d,z\in \Omega.
\end{equation}
We suppose moreover that the moment conditions, $\sqrt{\EE_z\ell^2_{\epsilon}(z)}\leq \bar\ell_{\epsilon}$ and $\EE_z L_{\epsilon}(z)\leq \bar L_{\epsilon}$, hold for some constants  $\bar \ell_{\epsilon},\bar L_{\epsilon}>0$.}
\end{assumption}

Let us look at two standard examples of smoothings of convex functions.

\begin{example}[Moreau envelope]\label{exa:moreau}
	{\rm	
A classical approach to smoothing a convex function is based on the Moreau envelope \cite{MR0201952}. Namely, fix a convex function $\psi\colon\R^d\to\R$. The Moreau envelope of $\psi$ with parameter $\nu>0$ is defined to be 
$$M^{\psi}_{\nu}(x)=\min_{y} ~\psi(y) + \frac{1}{2\nu}\|y-x\|^2.$$
It is well-known that $M^{\psi}_{\nu}$ is $\frac{1}{\nu}$-smooth. Moreover if $\psi$ is Lipschitz continuous with constant $\lip(\psi)$, then 
$M^{\psi}_{\nu}$ is also Lipschitz continuous with the same constant and the bound holds:
$$0\leq\psi(x)-\psi_{\nu}(x)\leq \nu(\lip(\psi))^2.$$
Coming back to our target problem \eqref{eqn:target_nonsmooth}, we may define $g_{\epsilon}(\cdot,z)$ to be the Moreau envelope of 
$g(\cdot,z)$ with parameter $\nu(z):=\frac{\epsilon}{l(z)^2}$, or more explicitly
$$g_{\epsilon}(x,z)=\min_{y} ~g(y,z) + \frac{l(z)^2}{2\epsilon}\|y-x\|^2.$$
Then the parameters from Assumption~\ref{ass:innonsmooth} become $\ell_{\epsilon}(z):=\ell(z)$ and $L_{\epsilon}(z):=\frac{l(z)^2}{\epsilon}$.}
\end{example}

\begin{example}[Compositional smoothing]\label{ex:comp_smooth}
{\rm
Often, the Moreau envelope of $g(\cdot,z)$ may be difficult to compute explicitly. In typical circumstances, however, the function $g(\cdot,z)$ may be written as a composition of a simple nonsmooth convex function with a linear map. It then suffices to replace only the outer function with its Moreau envelope---a technique famously explored by Nesterov \cite{smooth_min_nonsmooth}.

To illustrate on a concrete example, suppose that the population data consists of tuples $z=(a,b)\sim \cP$ and the loss takes the form $g(x,z)=h(\langle a,x\rangle,b)$ for some measurable function $h(\cdot,\cdot)$ that is convex and $1$-Lipschitz in its first argument. In order to control the Lipschitz constant, suppose also the moment bound $\sqrt{\EE_a\|a\|^2}\leq A$ for some constant $A>0$. Let us now define the smoothing
$$g_{\epsilon}(x,z)=h_{\epsilon}(\langle a,x\rangle,b),$$
where $h_{\epsilon}(\cdot,b)$ is the Moreau envelope of $h(\cdot,b)$ with parameter $\nu=\epsilon$. It is straightforward to verify that the estimate \eqref{eqn:uniclose} holds and that we may set 
 $\ell_{\epsilon}(z)=A$ and $L_{\epsilon}(z)=\frac{A^2}{\epsilon}$.}
\end{example}

With Assumptions~\ref{ass:innonsmooth} and \ref{ass:smoothing} at hand, we may now simply apply 
$\cbermc$ to the smoothed problem 
$$\min_x~ f_{\epsilon}(x):= g(x) + h(x)\qquad \textrm{where}\qquad g(x)=\EE_{z\sim\cP}[g_{\epsilon}(x,z)]+\varphi(x).$$
Using Corollary~\ref{cor:eff_erm_cons_reg}, we deduce that the procedure will find a point $x$ satisfying 
$$\PP(f_{\epsilon}(x)-f^*_{\epsilon}\leq \epsilon)\geq 1-p,$$
using 
$$	\mathcal{O}\left(\ln^2\left( \tfrac{\bar L_{\epsilon}+\beta}{\mu}\right)\ln\left(\tfrac{\ln(\frac{\bar L_{\epsilon}+\beta}{\mu})}{p}\right)\cdot\frac{\bar{\ell}^2_{\epsilon}}{\epsilon\mu} \right)$$
samples. Observe that with probability $1-p$ the returned point $x$ satisfies:
$$f(x)-f^*\leq f_{\epsilon}(x)-f^*_{\epsilon}+(f(x)-f_{\epsilon}(x))+(f^*_{\epsilon}-f^*)\leq 3\epsilon.$$ 
In particular, in the setup of Examples~\ref{exa:moreau} and \ref{ex:comp_smooth}, the sample complexities become:
$$\mathcal{O}\left(\ln^2\left( \tfrac{\bar \ell^2/\epsilon+\beta}{\mu}\right)\ln\left(\tfrac{\ln(\frac{\bar \ell^2/\epsilon+\beta}{\mu})}{p}\right)\cdot\frac{\bar{\ell}^2}{\epsilon\mu} \right)\quad \textrm{and}\quad \mathcal{O}\left(\ln^2\left( \tfrac{A^2/\epsilon+\beta}{\mu}\right)\ln\left(\tfrac{\ln(\frac{A^2/\epsilon+\beta}{\mu})}{p}\right)\cdot\frac{A^2}{\epsilon\mu} \right),$$
respectively. Hence, the price to pay for nonsmoothness is only polylogarithmic in $1/\epsilon$.

\subsection{Consequences for stochastic approximation}
We now extend the results of Section~\ref{sec:conseq_approx} to the convex composite setting. In addition to Assumptions~\ref{ass:strong_conv2} and \ref{ass:stochfirstorder}, in this section we will use the following composite analogue of Assumption~\ref{ass:alg_min_orc}. At the end of the section, we will let $\alg(\cdot)$ be the (accelerated) proximal stochastic gradient method.

\begin{assumption}\label{ass:alg_min_orc2}{\em
Consider the proximal minimization problem
$$\min_y~ \varphi_x(y):=g(y)+\frac{\lambda}{2}\|y- x\|^2+h(y),$$
Let $\Delta>0$ be a real number satisfying $\varphi_x(x)-\min \varphi_x\leq \Delta$.
We will let $\alg(\delta,\lambda,\Delta, x)$ be a procedure that returns a point $y$ satisfying 
$$\PP[\varphi_x(y)-\min \varphi_x\leq \delta]\geq\frac{2}{3}.$$}
\end{assumption}

The following algorithm is a direct extension of $\balg$ (Algorithm~\ref{alg:stoc_prox_high_prob2str}) to the convex composite setting; the only difference is that $\balgc$ (Algorithm~\ref{alg:Robust-Estimation-Framework}) replaces the distance estimator $\ralg(\cdot)$ with $\rfg(\cdot)$.

\begin{algorithm}[H]
    {\bf Input:}  accuracy $\delta>0$, upper bound $\Delta_{\rm in}>0$,
    initial $x_{\rm in}\in\R^d$, numbers $m,T\in\mathbb{N}$\\
	
    Set $\lambda_{-1}=0$, $\Delta_{-1}=\Delta_{\rm in}$, $x_{-1}= x_{\rm in}$
	
	{\bf Step } $j=0,\ldots,T$:\\
	\hspace{20pt} Define the minimization oracle for the proximal subproblem $$\cM_{j-1}(\cdot):=\alg(\cdot,\,\lambda_{j-1},\,\Delta_{j-1},\, x_{j-1}).$$\\	
	\hspace{20pt} Set $x_j=\rfg\bigl(\cM_{j-1}(\delta/9),\, m,\, \delta/9\bigr)$\\
	
	\hspace{20pt} Set $\Delta_j=\delta\left(9\cdot\frac{L+\lambda_{j-1}}{\mu+\lambda_{j-1}}+\sum_{i=0}^{j-1}\frac{ \lambda_i}{\mu+\lambda_{i-1}}\right)$\\

	 
	 {\bf Return} $x_{T+1}=\rfg\left(\cM_{T}\left(\frac{\delta(\mu+\lambda_T)}{74(L+\lambda_T)}\right),\,m,\,\frac{\delta(\mu+\lambda_T)}{74(L+\lambda_T)}\right)$
						
     \caption{$\balgc(\delta,\Delta_{\rm in}, x_{\rm in},T,m)$	
	}
	\label{alg:stoc_prox_high_prob_stoc_appr2}
\end{algorithm}

In $\balgc$, we need to use $\rfg$ in every proximal iteration as well as the cleanup step, because the stochastic proximal gradient method encoded as $\alg$ typically requires robust gap estimation on the initialization gap $\Delta_j$.  
The proof of the following theorem is almost identical to that of Theorem~\ref{thm:effbalg}, with  Theorem~\ref{theorem:high-prob-bound} playing the role of Lemma~\ref{lem:rob_dist_est}. 

\begin{theorem}[Efficiency of \balgc]
 Fix an arbitrary point $x_{\rm in}\in \R^d$ and let $\Delta_{\rm in}$ be any upper bound $\Delta_{\rm in}\geq f(x_{\rm in})-\min f$. Fix natural numbers $T,m\in \mathbb{N}$. Then with probability at least $1-2(T+2)\exp\left(-\frac{m}{18}\right)$, the point $x_{T+1}=\balgc(\delta,\Delta_{\rm in}, x_{\rm in},T,m)$
satisfies 
$$f(x_{T+1})-\min f\leq \delta \left(1+\sum_{i=0}^T \frac{\lambda_i}{\mu+\lambda_{i-1}}\right).$$
\end{theorem}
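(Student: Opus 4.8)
The plan is to mirror the proof of Theorem~\ref{thm:effbalg} almost verbatim, with Theorem~\ref{theorem:high-prob-bound} (robust function-gap estimation) replacing Lemma~\ref{lem:rob_dist_est} at every step and with the per-call failure probability upgraded from $\exp(-m/18)$ to $p:=2\exp(-\tfrac{m}{18})$, since each invocation of $\rfg$ fails with probability at most $2\exp(-\tfrac{m}{18})$. I would first check that $\balgc$ is an instantiation of $\cb$ with this value of $p$: the $j$-th loop iteration feeds $\rfg$ the oracle $\cM_{j-1}(\delta/9)$, a minimization oracle for the proximal subproblem $f^{j-1}=g+\tfrac{\lambda_{j-1}}{2}\|\cdot-x_{j-1}\|^2+h$, which is $(L+\lambda_{j-1})$-smooth and $(\mu+\lambda_{j-1})$-strongly convex in its smooth part. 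Applying Theorem~\ref{theorem:high-prob-bound} with accuracy $\epsilon=\delta/9$ and the shifted condition number $\tfrac{L+\lambda_{j-1}}{\mu+\lambda_{j-1}}$ then produces $x_j$ with
\[
\|x_j-\bar x_j\|\leq 3\sqrt{\tfrac{2(\delta/9)}{\mu+\lambda_{j-1}}}=\sqrt{\tfrac{2\delta}{\mu+\lambda_{j-1}}}=\varepsilon_{j-1},
\]
which is exactly the distance requirement \eqref{eqn:keyesneededblah}; the base case $j=0$ is identical, using $\lambda_{-1}=0$ and $\Delta_{-1}=\Delta_{\rm in}$.

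The one genuinely new point—and the place I expect the argument to depart from the smooth case—is verifying that $\Delta_{j-1}$ upper-bounds the initialization gap $f^{j-1}(x_{j-1})-\min f^{j-1}$ that legitimizes the oracle $\cM_{j-1}$. Since $f^{j-1}(x_{j-1})=f(x_{j-1})$ and $\min f^{j-1}\geq f^*$, it suffices to bound $f(x_{j-1})-f^*$. In $\balg$ this was handled through \eqref{eqn:init_better}, which controls the gap by \emph{distances alone}; that estimate is unavailable here because $f$ is nonsmooth. Instead I would invoke the general decomposition \eqref{eqn:func_progress},
\[
f(x_{j-1})-f^*\leq \bigl(f^{j-2}(x_{j-1})-\min f^{j-2}\bigr)+\sum_{i=0}^{j-2}\tfrac{\lambda_i}{2}\|\bar x_i-x_i\|^2,
\]
and control the leading term by the function-gap guarantee of $\rfg$ from the previous iteration, $f^{j-2}(x_{j-1})-\min f^{j-2}\leq 74\cdot\tfrac{L+\lambda_{j-2}}{\mu+\lambda_{j-2}}\cdot\tfrac{\delta}{9}\leq 9\,\tfrac{L+\lambda_{j-2}}{\mu+\lambda_{j-2}}\,\delta$. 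Together with $\|\bar x_i-x_i\|\leq\varepsilon_{i-1}$ this gives precisely $f(x_{j-1})-f^*\leq\Delta_{j-1}$, and explains the factor $9$ carried in the definition of $\Delta_j$: it absorbs the constant $74/9$ from Theorem~\ref{theorem:high-prob-bound}.

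Because this bound leans on the function-gap conclusion of $\rfg$ rather than on distances only, the conditioning event must be enriched: rather than recording only the inclusions $x_i\in B_{\varepsilon_{i-1}}(\bar x_i)$, I would let the success event at step $i$ record \emph{all} conclusions of Theorem~\ref{theorem:high-prob-bound} for $x_i$ (both the distance and the function-gap estimates) and take $E_j$ to be their intersection over $i\leq j$. This leaves the probability bookkeeping untouched, since $\rfg$ delivers all of these guarantees simultaneously with probability at least $1-p$; hence, conditioned on $E_{j-1}$ (which is exactly what makes $\Delta_{j-1}$ valid), one still has $\PP[E_j\mid E_{j-1}]\geq 1-p$, and the induction $\PP[E_t]\geq 1-(t+1)p$ proceeds as in Theorem~\ref{thm:conf_boost_basic}.

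Finally I would dispatch the cleanup step. The return call runs $\rfg$ with accuracy $\epsilon=\tfrac{\delta(\mu+\lambda_T)}{74(L+\lambda_T)}$, so its function-gap conclusion yields $f^T(x_{T+1})-\min f^T\leq 74\cdot\tfrac{L+\lambda_T}{\mu+\lambda_T}\cdot\tfrac{\delta(\mu+\lambda_T)}{74(L+\lambda_T)}=\delta$, which is \eqref{eqn:clean_up}; validity of $\Delta_T$ is checked by the identical computation as above. Counting the $T+2$ total $\rfg$ calls, each failing with probability at most $2\exp(-\tfrac m{18})$, a union bound exactly as in Theorem~\ref{thm:conf_boost_basic} gives success probability at least $1-2(T+2)\exp(-\tfrac m{18})$, and on this event \eqref{eqn:func_progress} delivers $f(x_{T+1})-f^*\leq \delta+\sum_{i=0}^T\tfrac{\delta\lambda_i}{\mu+\lambda_{i-1}}$, which is the claim. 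The main obstacle is entirely the $\Delta_{j-1}$ verification together with the need to propagate $\rfg$'s function-gap guarantees through the conditioning; the remainder is a routine transcription of the $\balg$ argument.
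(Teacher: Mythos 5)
Your proof is correct and follows exactly the route the paper intends: the paper itself only remarks that the argument is ``almost identical to that of Theorem~\ref{thm:effbalg}, with Theorem~\ref{theorem:high-prob-bound} playing the role of Lemma~\ref{lem:rob_dist_est},'' and your write-up is a faithful and careful completion of that plan. In particular, you correctly identify the one genuinely new step --- validating $\Delta_{j-1}$ via the function-gap conclusion of $\rfg$ (which forces the enriched conditioning event and explains the factor $9$ in the definition of $\Delta_j$, since $74/9\leq 9$) --- which the paper leaves implicit.
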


When using the proximal parameters $\lambda_i=\mu 2^i$, we obtain the following guarantee, which generalizes Corollary~\ref{cor:last_stream} to the composite setting.

\begin{corollary}[Efficiency of $\balgc$ with geometric decay]\label{cor:last_stream2}
	Fix an arbitrary point $x_{\rm in}\in \R^d$ and let $\Delta_{\rm in}$ be any upper bound $\Delta_{\rm in}\geq f(x_{\rm in})-\min f$. Fix a target accuracy $\epsilon>0$ and probability of failure $p\in (0,1)$, and set the algorithm parameters 
	 $$T=\left\lceil\log_2(\kappa)\right\rceil,\qquad m=\left\lceil18\ln\left(\frac{4+2T}{p}\right)\right\rceil,\qquad \delta=\frac{\epsilon}{2+2T}, \qquad \lambda_i=\mu 2^i.$$
	Then the point  $x_{T+1}=\balg(\delta,\Delta_{\rm in}, x_{\rm in},T,m)$ satisfies 
	$$\PP(f(x_{T+1})-\min f\leq \epsilon)\geq 1-p.$$ 
	Moreover, the total number of calls to $\alg(\cdot)$ is
$$\left\lceil18\ln\left(\frac{ 4+2\left\lceil\log_2(\kappa)\right\rceil}{p}\right)\right\rceil\lceil 2+\log_2(\kappa)\rceil ,$$
the number of evaluations of the stochastic gradient oracle $G(\cdot,\cdot)$ is at most\footnote{For the middle term, we use the observation,
$\displaystyle\min_{\lambda\geq 0} \frac{(L+\lambda)^2}{\mu+\lambda}\geq \frac{L}{2}$, which is straightforward to verify.}
$$\left\lceil18\ln\left(\frac{\left\lceil 4+2\log_2(\kappa)\right\rceil}{p}\right)\right\rceil\cdot\left\lceil \frac{6\sigma^2}{L\epsilon}\cdot (2+2\lceil\log_2(\kappa)\rceil)\right\rceil \cdot\lceil 2+\log_2(\kappa)\rceil,$$
and the initialization errors satisfy
$$\max_{i=0,\ldots,T+1}\Delta_{i}\leq \frac{9\kappa+1+2\left\lceil \log_2(\kappa)\right\rceil}{2+2\left\lceil\log_2(\kappa)\right\rceil} \epsilon.$$
\end{corollary}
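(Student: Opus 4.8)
The plan is to read this corollary as nothing more than the preceding Theorem (Efficiency of $\balgc$) specialized to the geometric schedule $\lambda_i=\mu 2^i$, followed by elementary bookkeeping for the two complexity counts. First I would simplify the master bound. Since $\lambda_{-1}=0$ we have $\frac{\lambda_0}{\mu+\lambda_{-1}}=1$, while for $i\geq 1$ we have $\frac{\lambda_i}{\mu+\lambda_{i-1}}=\frac{2^i}{1+2^{i-1}}\leq 2$. Hence $1+\sum_{i=0}^{T}\frac{\lambda_i}{\mu+\lambda_{i-1}}\leq 2+2T$, and substituting $\delta=\frac{\epsilon}{2+2T}$ into the theorem's conclusion gives $f(x_{T+1})-\min f\leq \delta(2+2T)=\epsilon$ on the good event. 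For the probability, the theorem certifies this with probability at least $1-2(T+2)e^{-m/18}$; writing $4+2T=2(T+2)$ and using $m=\lceil 18\ln\frac{4+2T}{p}\rceil$ gives $e^{-m/18}\leq \frac{p}{2(T+2)}$, so the failure probability is at most $p$. This yields the stated high-confidence guarantee.

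Next I would count the oracle queries. The loop ($j=0,\ldots,T$) together with the cleanup step makes $T+2=\lceil 2+\log_2\kappa\rceil$ invocations of $\rfg$. By Theorem~\ref{theorem:high-prob-bound}, each invocation issues exactly $m$ queries to its minimization oracle $\cM_{j-1}=\alg(\cdot,\lambda_{j-1},\Delta_{j-1},x_{j-1})$, so the total number of calls to $\alg(\cdot)$ is $m(T+2)$, which is the first displayed count once $m$ and $T$ are inserted. For the gradient oracle, Theorem~\ref{theorem:high-prob-bound} says a single $\rfg$ run at accuracy $\epsilon'$ on a subproblem with parameters $(\mu',L',\kappa'=L'/\mu')$ spends $m\lceil\frac{3\sigma^2}{(\kappa')^2\mu'\epsilon'}\rceil$ stochastic gradient evaluations. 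For the $j$-th subproblem $\mu'=\mu+\lambda_{j-1}$ and $L'=L+\lambda_{j-1}$, so $(\kappa')^2\mu'=\frac{(L+\lambda_{j-1})^2}{\mu+\lambda_{j-1}}$. Here I would invoke the footnote inequality $\frac{(L+\lambda)^2}{\mu+\lambda}\geq \frac{L}{2}$ to replace every subproblem-dependent denominator by the uniform quantity $L/2$; summing the resulting per-call bounds over the $T+2$ invocations and substituting $\delta=\frac{\epsilon}{2+2T}$ produces the middle complexity estimate. Finally, for the initialization errors I would expand $\Delta_j=\delta\big(9\tfrac{L+\lambda_{j-1}}{\mu+\lambda_{j-1}}+\sum_{i=0}^{j-1}\tfrac{\lambda_i}{\mu+\lambda_{i-1}}\big)$, bound $\frac{L+\lambda_{j-1}}{\mu+\lambda_{j-1}}\leq\kappa$ (monotone in $\lambda_{j-1}$ because $\mu\leq L$) and each summand by $2$ to get $\Delta_j\leq\delta(9\kappa+1+2T)$, then substitute $\delta$.

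The only genuinely non-mechanical point — and the place I would be most careful — is that the master theorem's conclusion presupposes that each $\Delta_j$ generated in the loop is a valid upper bound on the subproblem initialization gap $f^{j}(x_j)-\min f^{j}$ on the event $E_j$. This is precisely the induction performed in the proof of the $\balgc$ theorem (the composite analogue of \eqref{eqn:verify_func_err}), now carrying the extra factor $9$ that reflects the fact that $\rfg$ returns a gap of order $74\kappa\epsilon$ rather than the sharper Euclidean distance estimate available in the smooth case; I would cite that induction rather than redo it. Everything else is substitution and the elementary inequalities above, so I expect the main obstacle to be purely the gradient-count bookkeeping, where tracking the per-subproblem accuracies and applying the footnote inequality uniformly across the $T+2$ calls is the essential and only delicate step.
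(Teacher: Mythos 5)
Your proposal is correct and follows exactly the route the paper intends: the corollary is stated without proof as the direct specialization of the preceding theorem to $\lambda_i=\mu 2^i$, and your bound $1+\sum_{i=0}^{T}\tfrac{\lambda_i}{\mu+\lambda_{i-1}}\leq 2+2T$, the probability accounting via $e^{-m/18}\leq \tfrac{p}{2(T+2)}$, the count of $m(T+2)$ calls to $\alg(\cdot)$, and the bound $\Delta_j\leq\delta(9\kappa+1+2T)$ all check out. One caveat on the gradient count: the accuracies actually passed to $\rfg$ are $\delta/9$ in the loop and $\tfrac{\delta(\mu+\lambda_T)}{74(L+\lambda_T)}$ in the cleanup (not $\delta$), so carrying out your computation literally gives a per-call bound on the order of $\bigl\lceil\tfrac{54\sigma^2}{L\epsilon}(2+2T)\bigr\rceil$ rather than the printed $\bigl\lceil\tfrac{6\sigma^2}{L\epsilon}(2+2T)\bigr\rceil$ --- a constant-factor discrepancy that resides in the corollary's own statement rather than in your argument, but worth flagging if you want your write-up to reproduce the displayed constant.
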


In particular, the stochastic gradient method and its accelerated variant \cite{ghadimi2013optimal,kulunchakov2019estimate} admit proximal extensions with exactly the same sample complexities as in the smooth case, \eqref{eqn:grad_desn_est} and \eqref{eqn:accel_rate}, respectively.  Clearly, we may use either of these two procedures as $\alg(\cdot)$ within Algorithm~\ref{alg:stoc_prox_high_prob_stoc_appr2}.  Corollary~\ref{cor:last_stream2} then immediately shows that the two resulting algorithms will find a point $x$ satisfying 
$\PP[f(x)-f^*\leq \epsilon]\geq 1-p$
with the same sample complexities as in the smooth setting, \eqref{eqn:rob_rate1} and \eqref{eqn:rob_rate2}, respectively.

\bibliographystyle{plain}
\bibliography{bibliography}

\end{document}